\newtheorem{theorem}{Theorem}%[section]
\newtheorem*{theorem*}{Theorem}
\newtheorem{lem}{Lemma}[section]
\newtheorem{cor}{Corollary}
\newtheorem{definition}[lem]{Definition}
\newtheorem*{notation*}{Notation}
{\theoremstyle{definition}
\newtheorem{example}[lem]{Example}}
\newtheorem{pro}[lem]{Proposition}
\newtheorem*{pro*}{Proposition}
\numberwithin{equation}{section}
 {\theoremstyle{definition}
 \newtheorem{remark}[lem]{Remark}}
 \newtheorem*{remark*}{Remark}
 \newtheorem*{claim}{Claim}
\newcounter{nmdthmcnt}
\newenvironment{customtheorem}[1]
{\par\noindent\textbf{Theorem #1.} \itshape}
{\par}
\newcommand{\ssubset}{\subset\joinrel\subset}
\newcommand{\bC}{\mathbb{C}}
\newcommand{\bB}{\mathbb{B}}
\begin{document}

\title{\vspace{-1.2cm} \bf Domains with Bergman metrics of constant curvature and Bergman-negligible subsets\rm}

\author{Peter Ebenfelt, John N. Treuer, Ming Xiao}
\date{}

\maketitle

\begin{abstract}
Let $D$ be a bounded domain in $\mathbb{C}^n$.  Suppose the holomorphic sectional curvature of its Bergman metric equals a negative constant $\tau$.  We show that $D$ is biholomorphic to a domain $\Omega$ equal to the unit ball in $\mathbb{C}^n$ less a relatively closed set of measure zero, and that all $L^2$-holomorphic functions on $\Omega$ extend to $L^2$-holomorphic functions on the ball.  Consequently,
%if the Bergman metric of $M$ has holomorphic sectional curvature equal to a negative constant $\tau$, then
$\tau$ must equal the holomorphic sectional curvature of the unit ball.  This generalizes a classical theorem of Lu.  Some applications of the theorem, especially in extending classical work of Wong and Rosay, are also presented.

\end{abstract}

\renewcommand{\thefootnote}{\fnsymbol{footnote}}
\footnotetext{\hspace*{-7mm}
\begin{tabular}{@{}r@{}p{16.5cm}@{}}
& Keywords. Uniformization, Bergman metric, Holomorphic sectional curvature, moment problem, biholomorphism\\
& Mathematics Subject Classification. Primary 32Q05; Secondary 32Q30, 32A36, 32H02, 44A60
%32Q05 Negative curvature complex manifolds
%32Q30 Uniformization of complex manifolds
%32A25 Integral representations; canonical kernels(Szeg˝o, Bergman, etc.)
%32A36 Bergman spaces of functions in several complex variables
%32H02 = Holomorphic mappings, (holomorphic) embed-dings and related questions in several complex vari-ables,
%
%44A60 Moment problems {For trigonometric momentproblems, see 42A70}
\end{tabular}

\noindent\thanks{The first author is supported in part by the NSF grant DMS-2154368. The second author is supported in part by the NSF grant DMS-2247175 Subaward M2401689. The third author is
supported in part by the NSF grant DMS-2045104.}

\noindent \date
}

\section{Introduction}\label{intro}
The Bergman kernel and metric are among the most fundamental concepts in several complex variables. They provide a wealth of biholomorphic information for the study of geometric problems on complex domains, and more generally, on complex manifolds.
%The classification problem of determining which complex manifolds are equivalent up to biholomorphism is one of the central problems in several complex variables.  One of the primary tools used to study this problem is the Bergman metric, which provides a wealth of biholomorphic invariants.
One of the most basic models for such domains and manifolds is the unit ball $\mathbb{B}^n$ in the complex space $\mathbb{C}^n$. Its Bergman kernel  can be explicitly computed and shown to be
$$
K_{\mathbb{B}^n}(z, w) = \frac{n!}{\pi^n}\frac{1}{(1-\left<z, w\right>)^{n+1}},$$%\quad \left<z,\bar w\right>=\sum_{j=1}^n z_j\bar w_j,$$
where $\langle z, w \rangle = \sum_{j=1}^n z_j\overline{w}_j$.
We will mostly be concerned with the on-diagonal Bergman kernel denoted as $K := K(z, z)$.  It is also routine to verify that the Bergman metric,
\begin{equation*}
	(g_{\mathbb{B}^n})_{i\bar j}=\frac{\partial^2}{\partial{z_i}\partial{\bar z_j}}\log K_{\mathbb{B}^n},
\end{equation*}
has constant holomorphic sectional curvature $-2(n+1)^{-1}$.
 In 1965, Lu \cite{L65} proved his celebrated uniformization theorem of the Bergman metric.
\medskip
%\begin{theorem*}[Lu's uniformization theorem, \cite{L65}]
\begin{customtheorem}{0}{\rm (Lu's uniformization theorem, \cite{L65}).}
Let $D$ be a bounded domain in $\mathbb{C}^n$ such that its Bergman  metric $g$ is complete.  The Bergman  metric $g$ has constant holomorphic sectional curvature $\tau$ if and only if $D$ is biholomorphic to the unit ball $\mathbb{B}^n$. Moreover, in this case, the constant $\tau$ equals $-2(n+1)^{-1}$.
\end{customtheorem}
%\end{theorem*}
\medskip

Lu's uniformization theorem has significant applications in the study of the geometry of complex domains through the Bergman metric. In particular, it plays a key role in proving the well-known Cheng Conjecture, which asserts that a smoothly bounded strongly pseudoconvex domain whose Bergman metric is K\"ahler-Einstein must be biholomorphic to the unit ball.  The reader is referred to Fu--Wong \cite{FW97} and Nemirovski--Shafikov \cite{NS} for the solution of Cheng's Conjecture in $\mathbb{C}^2$, and Huang--Xiao \cite{HX21} for its solution in higher dimensions.

Note that in Lu's Theorem 0, since the Bergman metric $g$ is assumed to be complete with constant holomorphic sectional curvature, the domain $D$ must be covered by one of the space forms by the classical uniformization theorem. But, since $D$ is a bounded domain in $\bC^n$, it can only be covered by the hyperbolic ball ($\mathbb{B}^n$ equipped with its Bergman metric). Consequently, the constant curvature $\tau$ is necessarily negative. A longstanding folklore question asks, in the context of Lu's theorem, if and how the completeness assumption of the Bergman metric can be relaxed.  In particular, if the completeness of the Bergman metric $g$ is not assumed, then the classical uniformization theorem would not apply and the sign of $\tau$ would be unclear. 
%Recall by Kobayashi \cite{K59}, a bounded domain in $\mathbb{C}^n$ with a complete Bergman metric, as in the hypotheses of Lu's theorem,  is necessarily pseudoconvex.
%Then a  natural question asks what conclusion can be expected if the Bergman metric completeness assumption in Lu's theorem is weakened to a pseudoconvexity assumption.
The first breakthrough contributions were made by Dong--Wong \cite{DW22b, DW22a} and Huang--Li \cite{HL24}; see also an earlier work by Huang-Xiao \cite{HX20}.
Huang--Li \cite{HL24} constructed a domain whose Bergman metric has {\em positive} constant holomorphic sectional curvature. They proved, however, that such domains cannot be bounded by demonstrating that the Bergman space must be finite dimensional in this case. Together with Treuer, Huang--Li \cite{HL24} also showed that there are no bounded domains in  $\mathbb{C}^n$ whose Bergman metric has zero holomorphic sectional curvature. As a result,  if the Bergman metric of a bounded domain has constant holomorphic sectional curvature $\tau$, then $\tau$ must be negative.  Huang--Li \cite{HL24} further proved:

\medskip

%provided the following answer to the latter question,  where they proved a three-part theorem extending Lu's theorem to complex manifolds based on the signature of the holomorphic sectional curvature.
\begin{customtheorem}{A}
{\rm (Huang-Li, \cite{HL24}).} {\it Let $D$ be a bounded pseudoconvex domain in $\mathbb{C}^n.$
%an $n$-dimensional complex manifold whose Bergman space is base-point free and separates holomorphic directions.
	 The Bergman  metric $g$ of $D$ has constant holomorphic sectional curvature $\tau$ if and only if
%(which then must be negative by the above discussion)
$D$ is biholomorphic to a domain $\Omega \subset \mathbb{C}^n$, where $\Omega$ is the unit ball $\mathbb{B}^n$ less possibly a closed pluripolar subset.}
\end{customtheorem}
%\end{namedtheorem}

\medskip

Recall that a set $E$ is called pluripolar if it is locally a subset of the -$\infty$ level set of a plurisubharmonic function at every $p \in E$. By Kobayashi \cite{K59}, a bounded domain in $\mathbb{C}^n$ with a complete Bergman metric is necessarily pseudoconvex. On the other hand, however, a bounded pseudoconvex domain may not have a complete Bergman metric. For instance, the Bergman metric of the pseudoconvex domain consisting of a complex ball with a complex hyperplane deleted is not complete, as it is bounded across the deleted hyperplane. Therefore, assuming pseudoconvexity, as in Theorem A, is strictly weaker than assuming a complete Bergman metric, as in Lu's Theorem 0.
%As shown by Kobayashi \cite{K59}, the hypotheses that the Bergman space is base-point free and separates holomorphic directions are the necessary and sufficient conditions for a complex manifold to admit a well-defined Bergman metric.
Huang-Li's theorem gives the optimal extension of Lu's theorem to bounded pseudoconvex domains.

\begin{remark}\label{general Huang Li}
Huang--Li \cite{HL24} considered a more general setting than that of a bounded domain. They worked on a complex manifold $M$ satisfying the following two conditions: 
\begin{enumerate}[(a)]
    \item  $M$ admits a well-defined Bergman metric, meaning that its Bergman kernel $K_M$ is nowhere vanishing and $i\partial\overline{\partial} \log K_M$ is a positive-definite Hermitian form on $M$.
    \item The Bergman space separates points, meaning that for any two distinct points in $M$, there exists a Bergman space element which equals zero at one point and is nonzero at the other.
\end{enumerate}
Conditions (a) and (b) are always fulfilled if $M$ is a bounded domain in $\mathbb{C}^n.$ Among other things, Huang--Li \cite{HL24} established a more general version of Theorem A in which the pseudoconvex domain $D$ is replaced by any Stein manifold satisfying conditions (a) and (b).
\end{remark}

In this paper, we consider the problem of characterizing bounded domains without any pseudoconvexity assumption.
%building on their work, we consider the negative constant case when the Stein condition is dropped.
Our main result is as follows.

\begin{theorem}\label{main theorem - negative constant case}
Let $D$ be a bounded domain in $\mathbb{C}^n$.  Then, the following are equivalent:
\begin{enumerate}[(1)]
    \item[{\rm (1)}]\label{case 1} The Bergman metric $g$ of $D$ has constant holomorphic sectional curvature $\tau$;
    \item[{\rm (2)}]\label{case 2} $D$ is biholomorphic to a domain $\Omega \subset \mathbb{B}^n$, where $E := \mathbb{B}^n\setminus\Omega$ is a set of Lebesgue measure zero (thus $E \subset \partial \Omega$), and every $L^2$-holomorphic function on $\Omega$ extends to an $L^2$-holomorphic function on $\mathbb{B}^n$.
\end{enumerate}
%\noindent
Moreover, in either case, the constant $\tau = -{2(n + 1)^{-1}}$ and the Bergman kernel of $\Omega$ is the restriction of that of $\mathbb{B}^n$ to $\Omega$.
\end{theorem}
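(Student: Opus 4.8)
The plan is to prove $(1)\Rightarrow(2)$, since the reverse direction is a computation: if $\Omega=\mathbb{B}^n\setminus E$ with $E$ of measure zero and the $L^2$-extension property holds, then the restriction map $L^2_h(\mathbb{B}^n)\to L^2_h(\Omega)$ is a bijective isometry (injectivity because $E$ has measure zero, surjectivity by the extension hypothesis), so the Bergman kernels agree on $\Omega\times\Omega$, and then the curvature of the Bergman metric of $\Omega$ equals that of $\mathbb{B}^n$, namely $-2(n+1)^{-1}$; biholomorphic invariance of curvature then gives the last claim for $D$.

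For $(1)\Rightarrow(2)$ the first step is to invoke the local theory: on a domain with constant holomorphic sectional curvature $\tau$ for the Bergman metric, the potential $\log K_D$ satisfies a Monge--Amp\`ere-type / complex ODE system (this is the classical local computation of Lu), which forces $\tau<0$ by the Huang--Li--Treuer result quoted in the introduction, and, after normalizing, yields a local holomorphic embedding $F$ near any point $p\in D$ into $\mathbb{B}^n$ such that $K_D = c\,F^*K_{\mathbb{B}^n}$ for a constant $c$, i.e.\ locally $K_D(z,\bar z)=c\,(1-|F(z)|^2)^{-(n+1)}$ with $F$ a local biholomorphism onto its image in $\mathbb{B}^n$. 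The key step is then to upgrade this local map to a global biholomorphism of $D$ onto a domain $\Omega\subset\mathbb{B}^n$. The natural approach: the components of $F$ are, up to normalization, ratios of Bergman-space elements, so one builds $F$ globally out of a basis of $L^2_h(D)$ as in the classical Bergman-representative-coordinate construction; monodromy/connectedness of $D$ forces the local pieces to agree up to an automorphism of $\mathbb{B}^n$, giving a well-defined holomorphic $F:D\to\mathbb{B}^n$ with $K_D=c\,F^*K_{\mathbb{B}^n}$ on all of $D$. Since $i\partial\bar\partial\log K_D$ is the pullback of the ball metric and is nondegenerate, $F$ is an immersion; injectivity is obtained because $F$ is a local isometry from the Bergman metric of $D$ into the (complete) hyperbolic ball metric, together with a completeness-along-fibers or covering argument. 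Set $\Omega:=F(D)$.

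The main obstacle, and the genuinely new part beyond Huang--Li, is to show that $E:=\mathbb{B}^n\setminus\Omega$ has Lebesgue measure zero \emph{and} that every $L^2_h(\Omega)$ function extends across $E$ to $L^2_h(\mathbb{B}^n)$ --- without a pseudoconvexity hypothesis one cannot simply quote an $L^2$-extension theorem. Here is where I expect the moment-problem machinery advertised in the abstract to enter. Since $F$ is a biholomorphism of $D$ onto $\Omega$, we have $K_\Omega = $ (transfer of $K_D$), and combining with $K_D = c\,F^*K_{\mathbb{B}^n}$ one gets that $K_\Omega$ equals $c'\,K_{\mathbb{B}^n}\big|_{\Omega}$ times a pluriharmonic-exponential factor which, by examining the already-established local identity, is actually constant; normalizing volumes forces $c'=1$, so $K_\Omega = K_{\mathbb{B}^n}|_\Omega$. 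This identity of kernels says the reproducing kernel of the closed subspace $\{\,$restrictions to $\Omega$ of functions in $L^2_h(\mathbb{B}^n)\,\}$ of $L^2_h(\Omega)$ coincides with that of $L^2_h(\Omega)$ itself, hence that subspace is everything: every $L^2_h(\Omega)$ function is the restriction of some $L^2_h(\mathbb{B}^n)$ function, which is the desired extension property. Finally, to see $|E|=0$: if $E$ had positive measure, then restriction $L^2_h(\mathbb{B}^n)\to L^2_h(\Omega)$ would fail to be surjective onto the whole space (one can exhibit $L^2$ functions on $\Omega$ not arising as restrictions, or argue via the strict inequality of norms), contradicting the kernel identity just proved; equivalently, the kernel identity forces the two Hilbert spaces to have the same dimension \emph{and} the same inner product under restriction, which is impossible if mass is lost on a positive-measure set. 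This step — turning the pointwise kernel identity into the measure-zero and extension statements — is the crux, and I expect it to require a careful argument that the local constant-curvature structure propagates to give exactly $K_\Omega = K_{\mathbb{B}^n}|_\Omega$ rather than merely a proportionality up to a nonconstant factor.
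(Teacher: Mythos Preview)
Your outline has the right architecture (invoke Huang--Li for the global embedding into $\mathbb{B}^n$, then analyze $\Omega$), but the central step is circular and the paper's genuinely new idea is absent.

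Huang--Li give you only $K_\Omega(z,w)=\phi(z)\overline{\phi(w)}\,K_{\mathbb{B}^n}^{\lambda}(z,w)$ with an \emph{a priori} unknown holomorphic $\phi$ and exponent $\lambda>0$. You assert that the ``pluriharmonic-exponential factor'' is constant ``by examining the already-established local identity'', but the local identity says exactly that $\phi$ is holomorphic and nothing more; there is no reason it should be constant before you know something global about $\Omega$. Worse, your claim that ``normalizing volumes forces $c'=1$'' presupposes $|\Omega|=|\mathbb{B}^n|$, i.e.\ $|E|=0$, which is precisely what you are trying to deduce \emph{from} the kernel identity. So the argument $K_\Omega=K_{\mathbb{B}^n}|_\Omega \Rightarrow |E|=0$ (which is fine, and is essentially the paper's Proposition~\ref{pronegligibleset}) is being fed a hypothesis you have not established. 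You also implicitly assume $\lambda=1$ from the outset by writing $K_{\mathbb{B}^n}$ rather than $K_{\mathbb{B}^n}^{\lambda}$; determining $\lambda$ (equivalently, $\tau=-2(n+1)^{-1}$) is part of what must be proved.

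The paper breaks this circularity by proving $|E|=0$ \emph{first}, without knowing $\phi$ or $\lambda$. The mechanism is the one you name but do not use: the moment problem. From the kernel formula one extracts that the measures $\eta_U:=|\phi\circ U^{-1}|^2\chi_{U(\Omega)}\,dm_{2n}$, for every unitary $U$, all have the \emph{same} compactly supported moments $\int z^\alpha\bar z^\beta\,d\eta_U$, hence are equal. This forces $\chi_\Omega=\chi_{U(\Omega)}$ a.e.\ for every $U$, so any Lebesgue density point of $E$ would drag an entire sphere $\{|z|=r\}$ out of $\Omega$, disconnecting it. Thus $|E|=0$. Only \emph{then} does comparing the orthonormal bases of $A^2(\Omega)$ and $A^2(\mathbb{B}^n)$ pin down $\phi\equiv1$ and $\lambda=1$, after which your kernel-identity $\Rightarrow$ extension argument goes through. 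The unitary-invariance-of-moments step is the missing idea in your proposal.
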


Theorem \ref{main theorem - negative constant case} gives the final extension of Lu's uniformization theorem for bounded domains. Note that, by the discussion preceding Theorem A, the constant $\tau$ in \eqref{case 1} is {\em a priori} necessarily negative. Theorem \ref{main theorem - negative constant case} will be a special case of a more general theorem, Theorem \ref{general version main theorem} below, where the bounded domain hypothesis is weakened to complex manifolds satisfying conditions (a) and (b) in Remark \ref{general Huang Li}.  The proof of Theorem \ref{general version main theorem} will build on a key step of Huang-Li \cite{HL24}, stated as Theorem \ref{The portion of Huang and Li's Theorem we need} below, used to establish Theorem A and its more general formulation.

% The proof of (1) implies (2) in Theorem \ref{main theorem - negative constant case} is then based on a result in Huang--Li \cite{HL24}, restated here as Theorem \ref{The portion of Huang and Li's Theorem we need} below, concerning complex manifolds satisfying conditions (a) and (b) in Remark \ref{general Huang Li}. As a consequence, similar to the situation in Huang--Li \cite{HL24}, we can prove a more general version of Theorem \ref{main theorem - negative constant case} for complex manifolds; see Theorem \ref{general version main theorem}.

\begin{remark}\label{rmktothm} We make a few more remarks about Theorem \ref{main theorem - negative constant case}.
    \begin{enumerate}

   \item When $n = 1$, the conditions on $E$ and $\Omega$ in (2) hold if and only if the set $E$ is a closed polar subset of the disk. This follows directly from \cite[9.9 Theorem, p.~351]{Conway95}. Also note that when $n=1$, every domain is pseudoconvex and, hence, Theorem \ref{main theorem - negative constant case} reduces to Theorem A.

     %After an automorphism of the ball, one may suppose that $0 \in \Omega$. Since the Bergman kernel of $\Omega$ is the restriction of the Bergman kernel of the ball and $\Omega$ and $\mathbb{B}^1$ have the same area, $K_{\Omega}(0, 0) = m_{2}(\Omega)^{-1}$, where $m_{2}$ denotes the two-dimensional Lebesgue measure.  When the on-diagonal Bergman kernel of a bounded domain $\Omega$ in $\mathbb{C}$ equals its theoretical lower bound $m_{2}(\Omega)^{-1}$, by \cite[Theorem 1]{DT21} or \cite[Corollary 3.1]{DTZ23}, $\Omega$ equals a ball less a set $E$ which is polar.

        \item When $n \geq  2$, the set $E$ in (2) may not be a pluripolar set.  For example, let  $\epsilon > 0$ be small and set
        $$E = \left\{(z', z_n ) \in \mathbb{C}^{n-1} \times \mathbb{C}: |z'|^2+|\mathrm{Re} z_n|^2 \leq \epsilon, \mathrm{Im} z_n =0 \right\} \ssubset \mathbb{B}^n$$ and $ \Omega = \mathbb{B}^n\setminus E$.
        Note that $E$ is not pluripolar because it has Hausdorff codimension one, and pluripolar sets always have Hausdorff codimension at least two. On the other hand, it is also easy to verify that $\Omega$ satisfies the conditions in (2) by using Hartogs' extension theorem.

        \item When $n \geq 2,$ if in Theorem \ref{main theorem - negative constant case} we additionally assume that $D$ (and, hence, $\Omega$) is pseudoconvex, then $E$ in (2) must be pluripolar. Indeed, since the Bergman kernel $K_{\Omega}=K_{\mathbb{B}^n}$, it holds that $\lim_{\Omega \ni z \to w} K_{\Omega}(z) < \infty$ for every $w \in E$. Then, as observed in Huang-Li \cite{HL24}, by Pflug and Zwonek \cite[Lemma 11]{PZ02}, $E$ is pluripolar. Therefore, if one adds the assumption of pseudoconvexity of $D$ to Theorem \ref{main theorem - negative constant case}, then the result reduces to Theorem A.

        \item When $n \geq 2,$ even if $E$ is a pluripolar set, $\Omega = \mathbb{B}^n \setminus E$ may not be a pseudoconvex domain.  For instance, take $E = \{0\}$. Then, $E$ is pluripolar, but $\Omega $ is not pseudoconvex.

        %\item Huang and Li's uniformization theorem and Theorem \ref{main theorem - negative constant case} together shows that if $M$ is an $n$-dimensional complex manifold with a well-defined Bergman metric $g$ with constant holomorphic sectional curvature equal to $\tau$ and the Bergman space is infinite dimensional (e.g.~if $M$ is biholomorphic to a bounded domain), then $\tau = -2(n + 1)^{-1}$.

    \end{enumerate}
\end{remark}

Motivated by Theorem \ref{main theorem - negative constant case}, we make the following definition.

\begin{definition}
Let $U$ be a domain in $\mathbb{C}^n$. A closed subset $F \subset U$ is called a Bergman-negligible subset of $U$ if $F$ is of Lebesgue measure zero, and every $L^2$-holomorphic function on $U \setminus F$ extends as an $L^2$-holomorphic function on $U$. We allow $F$ to be empty.
\end{definition}

It is evident that a bounded domain $U \subset \mathbb{C}^n$ less a Bergman-negligible subset is necessarily connected (and hence remains a domain). For $n=1,$ a closed subset $F \subset U$ is Bergman-negligible if and only if it is polar (again this follows from \cite[9.9 Theorem, p.~351]{Conway95}).
In higher dimensions, every pluripolar subset $F$ has zero Lebesgue measure and $L^2$-holomorphic functions on $U \setminus F$ extend as $L^2$-holomorphic functions on $U$ (cf. \cite[Lemma 1]{Irgens04} for a proof of this fact). Consequently, a closed pluripolar subset of $U$ is Bergman-negligible.
However, as in part 2 of Remark \ref{rmktothm}, a Bergman-negligible subset is not necessarily pluripolar.
We further observe the following fact about Bergman-negligible subsets, which justifies the terminology. A proof can be found at the end of \S \ref{subsection finishing proof}.

\begin{pro}\label{pronegligibleset}
Let $\hat{U}$ and $U$ be two bounded domains in $\mathbb{C}^n$ with $\hat{U} \subset U.$ Then their Bergman kernels $K_{\hat{U}}$ and $K_U$ are equal on $\hat{U}$ if and only if $U \setminus \hat{U}$ is a Bergman-negligible subset of $U$.
\end{pro}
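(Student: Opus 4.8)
The plan is to route everything through the restriction operator between Bergman spaces, together with the variational (extremal) description of the on-diagonal Bergman kernel. For a bounded domain $V\subset\mathbb{C}^n$ write $A^2(V)$ for its Bergman space of $L^2$-holomorphic functions, and recall the standard facts: $K_V(z)=\sup\{|f(z)|^2:f\in A^2(V),\ \|f\|_{L^2(V)}\le 1\}$; this supremum is attained exactly by the normalized extremal function $K_V(\cdot,z)/\sqrt{K_V(z)}$; $K_V(z)>0$ (constants lie in $A^2(V)$); and $\operatorname{span}\{K_V(\cdot,z):z\in V\}$ is dense in $A^2(V)$. Since $\hat U$ is open and $\hat U\subset U$, the set $F:=U\setminus\hat U$ is relatively closed in $U$; restriction $R\colon A^2(U)\to A^2(\hat U)$, $Rf=f|_{\hat U}$, is a bounded, injective linear map (injective by the identity theorem since $U$ is connected), it is norm non-increasing, and it is isometric if and only if $|F|=0$. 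Note also that the extension property in the definition of a Bergman-negligible subset is precisely the surjectivity of $R$.

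For the implication ``$F$ Bergman-negligible $\Rightarrow K_{\hat U}=K_U$ on $\hat U$'': by hypothesis $|F|=0$ and $R$ is onto, so $R$ is a unitary isomorphism. Then for each $z_0\in\hat U$ and every $f\in A^2(U)$, $(Rf)(z_0)=f(z_0)=\langle f,K_U(\cdot,z_0)\rangle_{A^2(U)}=\langle Rf,RK_U(\cdot,z_0)\rangle_{A^2(\hat U)}$, so $K_U(\cdot,z_0)|_{\hat U}=RK_U(\cdot,z_0)$ is the reproducing kernel of $A^2(\hat U)$ at $z_0$; by uniqueness of reproducing kernels it equals $K_{\hat U}(\cdot,z_0)$, and restricting to the diagonal gives $K_U=K_{\hat U}$ on $\hat U$ (in fact the stronger off-diagonal identity holds).

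For the converse, assume $K_U(z)=K_{\hat U}(z)$ for all $z\in\hat U$. First I would show $|F|=0$ via the extremal function: fix $z_0\in\hat U$ and set $f_0:=K_U(\cdot,z_0)/\sqrt{K_U(z_0)}\in A^2(U)$, so $\|f_0\|_{L^2(U)}=1$, $|f_0(z_0)|^2=K_U(z_0)=K_{\hat U}(z_0)>0$, and $f_0\not\equiv 0$. Applying the variational bound on $\hat U$ to $f_0|_{\hat U}$ gives $K_{\hat U}(z_0)=|f_0(z_0)|^2\le K_{\hat U}(z_0)\,\|f_0\|_{L^2(\hat U)}^2$, hence $\|f_0\|_{L^2(\hat U)}^2\ge 1$; combined with $\|f_0\|_{L^2(\hat U)}^2\le\|f_0\|_{L^2(U)}^2=1$ this forces $\int_F|f_0|^2=0$, so $f_0$ vanishes a.e.\ on $F$, and since the zero set of the nontrivial holomorphic function $f_0$ on the connected set $U$ is Lebesgue-null, $|F|=0$. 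Consequently $R$ is an isometry, hence has closed range $\mathcal H:=R(A^2(U))$, and $R\colon A^2(U)\to\mathcal H$ is unitary. As in the previous paragraph, $K_U(\cdot,z_0)|_{\hat U}=RK_U(\cdot,z_0)$ is the reproducing kernel of $\mathcal H$ at $z_0$; but the reproducing kernel of a closed subspace $\mathcal H\subset A^2(\hat U)$ at $z_0$ is also $P_{\mathcal H}K_{\hat U}(\cdot,z_0)$ ($P_{\mathcal H}$ the orthogonal projection onto $\mathcal H$), whose value at $z_0$ equals $\|P_{\mathcal H}K_{\hat U}(\cdot,z_0)\|_{L^2(\hat U)}^2$. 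Hence $K_U(z_0)=\|P_{\mathcal H}K_{\hat U}(\cdot,z_0)\|_{L^2(\hat U)}^2\le\|K_{\hat U}(\cdot,z_0)\|_{L^2(\hat U)}^2=K_{\hat U}(z_0)$, and the hypothesis $K_U(z_0)=K_{\hat U}(z_0)$ forces $P_{\mathcal H}K_{\hat U}(\cdot,z_0)=K_{\hat U}(\cdot,z_0)$, i.e.\ $K_{\hat U}(\cdot,z_0)\in\mathcal H$. Since this holds for every $z_0\in\hat U$ and the kernel sections span a dense subspace while $\mathcal H$ is closed, $\mathcal H=A^2(\hat U)$, i.e.\ $R$ is surjective; together with $|F|=0$ this says exactly that $F$ is Bergman-negligible.

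The one step that is not purely formal is upgrading the scalar identity $K_U=K_{\hat U}$ on the diagonal to the operator-level conclusion that $R$ is onto; the extremal-function computation forcing $|F|=0$ is the crux, and once that is in hand the rest is routine reproducing-kernel Hilbert space bookkeeping (uniqueness of reproducing kernels, the projection formula for kernels of closed subspaces, density of kernel sections). One should also note the degenerate case $\hat U=U$, where $F=\emptyset$ and both sides of the equivalence hold trivially.
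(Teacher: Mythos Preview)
Your proof is correct. Both arguments follow the same two-step skeleton—first force $|F|=0$, then show the restriction $R$ is onto—but the execution differs in useful ways.

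For $|F|=0$, the paper argues by contradiction: it invokes the Lebesgue differentiation theorem to locate a density point $z_0\in F$, chooses $p\in\hat U$ with $K_U(z_0,p)\neq 0$, and then uses a complexification (polarization) step to pass from the diagonal hypothesis $K_U(z,z)=K_{\hat U}(z,z)$ to the off-diagonal identity $K_U(\cdot,p)=K_{\hat U}(\cdot,p)$ on $\hat U$; comparing $\|K_U(\cdot,p)\|^2_U=K_U(p,p)=K_{\hat U}(p,p)=\|K_U(\cdot,p)\|^2_{\hat U}$ then yields the contradiction $\int_F|K_U(\cdot,p)|^2=0$. Your extremal-function computation reaches the same conclusion more directly: it needs neither the density-point selection nor the polarization, since the variational inequality $|f_0(z_0)|^2\le K_{\hat U}(z_0)\|f_0\|_{\hat U}^2$ works purely on the diagonal.

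For surjectivity, the paper passes to an orthonormal basis $\{\phi_i\}$ of $A^2(U)$, observes it restricts to an orthonormal set in $A^2(\hat U)$ once $|F|=0$, and infers completeness from $\sum|\phi_i|^2=K_U=K_{\hat U}$. Your argument instead identifies $RK_U(\cdot,z_0)$ with $P_{\mathcal H}K_{\hat U}(\cdot,z_0)$ and uses the norm equality $K_U(z_0)=K_{\hat U}(z_0)$ to force $K_{\hat U}(\cdot,z_0)\in\mathcal H$, then concludes via density of kernel sections. Both are standard RKHS devices; yours has the virtue of never choosing a basis and of making transparent exactly where the diagonal equality is consumed (once to get $|F|=0$, once to kill the orthogonal complement of $\mathcal H$).
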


%The proof of Theorem \ref{main theorem - negative constant case} builds heavily on the proof of Huang and Li's theorem.  We isolate the parts of their proof that are needed in Section \ref{The part of their proof that we need}.  In particular, under the hypotheses of our theorem, Huang and Li derive the formula for the Bergman kernel of the domain $\Omega$ biholomorphic to $M$.  This formula is the starting point of our proof.  The main contribution of this paper is the observation that under the hypotheses of Theorem \ref{main theorem - negative constant case} that $M$ is biholomorphic to a ball less a relatively closed set of measure zero.  The main tool that we use to make this observation comes from the moment problem.  Recently, Huang, Li and Treuer used the moment problem in the proof of the zero holomorphic sectional curvature case of Huang and Li's theorem, Theorem \ref{Huang and Li's theorem all three parts} Part \eqref{Huang Li case 2}.

Proposition \ref{pronegligibleset} implies that if $F$ is a  Bergman-negligible subset of a bounded domain $U$, then for every $w \in F$,  the limit $\lim_{\Omega \ni z \to w} K_{U\setminus F}(z)$ is finite. Thus, by applying Pflug and Zwonek \cite[Lemma 11]{PZ02} as in part 3 in Remark \ref{rmktothm}, we see the following holds: For a bounded domain $U$ and a closed subset $F$ of $U$ such that $U \setminus F$ is pseudoconvex, $F$ is Bergman-negligible if and only if it is  pluripolar.

Next, we discuss applications of our main theorem in extending the well-known classical work
of Wong \cite{Wo77} and Rosay \cite{Ro79}.  Let $\mathrm{Aut}(D)$ denote the group of (biholomorphic) automorphisms of a domain $D$. A point $q \in \partial D$ is called a boundary orbit accumulation point (of $D$) if there is a sequence $\{\phi_j\}_{j=1}^{\infty} \subset \mathrm{Aut}(D)$ and $p \in \Omega$ such that $\phi_j(p) \to q$ as $j \to \infty$. The following results are
due to Wong \cite{Wo77} and Rosay \cite{Ro79}.

\medskip
%\bigskip

%{\bf Proposition.}{\it
\begin{customtheorem}{B} {\rm (Wong \cite{Wo77} and Rosay \cite{Ro79}).} Let $D \subset \mathbb{C}^n$ be a bounded domain.
\begin{enumerate}[(1)]
\item[{\rm (1)}]\label{case pnwr1}
If $D$ is homogeneous and has $C^2$-smooth boundary, then $D$ is biholomorphic to the unit ball $\mathbb{B}^n$.
%\end{pro}
%\begin{pro}\label{Rosays proposition}
\item[{\rm (2)}]\label{case pnwr2} If there is a boundary orbit accumulation point $q \in \partial D$ such that $D$ has $C^2$-smooth and strongly pseudoconvex boundary at $q$,  %Suppose that there is a sequence $\{\phi_j\}_{j=1}^{\infty} \subset \mathrm{Aut}(D)$ and $p \in \Omega$ such that $\phi_j(p) \to q$ as $j \to \infty$.
then $D$ is biholomorphic to the unit ball $\mathbb{B}^n$.
\end{enumerate}
%}
\end{customtheorem}

We note that (2) implies (1), as every bounded domain with $C^2$-smooth boundary must have a strongly pseudoconvex boundary point. We also remark that under the assumption of (1) or (2), the Bergman metric $g_D$ of $D$ must have constant holomorphic sectional curvature. One can deduce this by applying the work of Kim and Yu \cite{KiYu96} (without using the conclusion of Theorem B). For a detailed proof of this, see \cite[page 161]{KiYu96}. We point out that our notion of holomorphic sectional curvature differs from that in \cite{KiYu96} by a multiple of two. This is due to the different normalization conventions of the K\"ahler metric associated to a given K\"ahler form.  %which differ by a multiple of two. %The convention we use is the same as in Kobayashi \cite{K59}. %which differs by a multiple of two from the one in .
%Consequently, the Bergman kernel of $W$ is the restriction of that of $E_{g(p)}$ to $W$.   %Finally, after composing $T_p$ with a nondegenerate linear transformation, one may take $E_{g(p)}$ to be $\mathbb{B}^n$.

Inspired by the above results and discussion, we shall deduce a couple of corollaries from our main theorem that generalize Theorem B in various ways. We shall need the following definition.

\begin{definition}\label{defnc0bdry}
A domain $D \subset \mathbb{C}^n$ has $C^0$-boundary at a point $q \in \partial D$, if there exists a homeomorphism from some neighborhood $O \subset \mathbb{C}^n$ of $q$ to a domain $O_1 \times O_2 \subset \mathbb{R}^{2n-1} \times \mathbb{R}$, which maps $O \cap D$  onto $\{(x,y) \in O_1 \times O_2: y < \phi(x) \}$ for some continuous function $\phi$ on $O_1$. We further say $D$ has $C^0$-boundary, if it has $C^0$-boundary at every $q \in \partial D.$
%after a \textcolor{red}{homeomorphic (check after the proof of Proposition 0.6 below is completed)} local change of coordinates at $q$  is one side of the graph of a continuous function.
\end{definition}

Our Corollary \ref{cor2} concerns part (1) of Theorem B, and Corollary \ref{cor3} pertains to part (2).

\begin{cor}\label{cor2}
Let $D \subset \mathbb{C}^n$ be a bounded domain with $C^{0}$-boundary.  If its Bergman metric has constant holomorphic sectional curvature, then  $D$ is biholomorphic to $\mathbb{B}^n$.
%The following two statements hold.
%\begin{enumerate}[(1)]
%\item\label{case co21}
%Let $D \subset \mathbb{C}^n$ be a bounded domain with $C^{0}$-boundary.  If its Bergman metric has constant holomorphic sectional curvature, then  $D$ is biholomorphic to $\mathbb{B}^n$.
%\item\label{case co22}
%Let $D \subset \mathbb{C}^n$ be a bounded domain whose Bergman metric has constant holomorphic sectional curvature. Assume $D$ has a boundary orbit accumulation point $q \in \partial D$. If $D$ has $C^0$-boundary at $q$, then $D$ is biholomorphic to $\mathbb{B}^n$.
%\end{enumerate}
\end{cor}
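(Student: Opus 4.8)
The plan is to combine Theorem \ref{main theorem - negative constant case} with a boundary-extension/removability argument showing that a $C^0$ boundary point of $D$ cannot be ``hidden'' as part of the omitted set $E$. By Theorem \ref{main theorem - negative constant case}, the hypothesis that $g_D$ has constant holomorphic sectional curvature already gives a biholomorphism $F\colon D \to \Omega = \mathbb{B}^n \setminus E$ with $E$ of Lebesgue measure zero, $E \subset \partial\Omega$, and every $L^2$-holomorphic function on $\Omega$ extending to $\mathbb{B}^n$. So the task reduces to proving that $E = \emptyset$, i.e.\ that $\Omega = \mathbb{B}^n$; then $D$ is biholomorphic to $\mathbb{B}^n$.

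First I would record the structural consequences of the biholomorphism. Since $E$ has measure zero and $L^2$-holomorphic functions extend across it, the Bergman spaces of $\Omega$ and $\mathbb{B}^n$ coincide (as subspaces of $L^2(\mathbb{B}^n)$, the restriction/extension maps being isometric), and hence $K_\Omega = K_{\mathbb{B}^n}|_\Omega$, which is also asserted in Theorem \ref{main theorem - negative constant case}. In particular $K_\Omega$ stays bounded as one approaches any point of $E$ from within $\Omega$. Next I would transport the $C^0$ boundary structure: the key point is that a biholomorphism between bounded domains is an isometry for the Bergman metric (and for the Carath\'eodory/Kobayashi metrics), so completeness-type or boundary-distance estimates are preserved. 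The heart of the matter is to show that $E$ cannot meet the portion of $\partial\mathbb{B}^n$ that is the image of the $C^0$ part of $\partial D$; since $\partial D$ is entirely $C^0$, this forces $E \subset \mathbb{S}^{2n-1}$ to be empty — but $E \subset \partial\Omega$ and $\Omega \subset \mathbb{B}^n$ already force $E \subset \overline{\mathbb{B}^n}$, and points of $E$ in the open ball are the dangerous ones. So really I must rule out $E \cap \mathbb{B}^n = \emptyset$: if $E$ contained an interior point $p_0$ of $\mathbb{B}^n$, then $\Omega = \mathbb{B}^n \setminus E$ would have $p_0$ as a boundary point at which $K_\Omega(z) \to K_{\mathbb{B}^n}(p_0) < \infty$, i.e.\ a ``bounded'' boundary point, and this is the obstruction to boundedness of the Bergman metric near $p_0$; pulling back via $F^{-1}$, this would correspond to a boundary point of $D$ near which the Bergman metric of $D$ is bounded — but I will argue a domain with $C^0$ boundary at a point has a Bergman metric that is unbounded (blows up) there, giving a contradiction. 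One clean way to make the last step rigorous: a bounded domain with $C^0$ (even just Lipschitz, or here merely ``$C^0$ graph'') boundary is pseudoconvex? No — $C^0$ boundary does not imply pseudoconvexity. Instead I would use that $D$ with $C^0$ boundary admits, near each boundary point $q$, a bounded holomorphic peak-type or at least a local $L^2$ function forcing $K_D \to \infty$; more efficiently, I would invoke the localization principle for the Bergman kernel together with the fact that a $C^0$-graph domain satisfies an outer cone / ``thin complement'' condition at $q$ guaranteeing $K_D(z) \to \infty$ as $z \to q$. Equivalently, I can use part 3 of Remark \ref{rmktothm} in reverse: $\Omega = \mathbb{B}^n \setminus E$ with $E$ of positive ``Bergman-thickness'' at an interior point cannot occur, and at boundary points the $C^0$ graph condition is exactly what prevents $E$ from lying there.

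Let me reorganize into the steps I would actually carry out. Step 1: apply Theorem \ref{main theorem - negative constant case} to get $F\colon D \xrightarrow{\sim} \Omega = \mathbb{B}^n \setminus E$ with $K_\Omega = K_{\mathbb{B}^n}|_\Omega$ and $E \subset \partial\Omega$ of measure zero. Step 2: show $E \cap \mathbb{B}^n = \emptyset$. Suppose not, and pick $p_0 \in E \cap \mathbb{B}^n$; then $\limsup_{\Omega \ni z \to p_0} K_\Omega(z) \le K_{\mathbb{B}^n}(p_0) < \infty$, and since the Bergman metric is a biholomorphic invariant and $g_\Omega = g_{\mathbb{B}^n}|_\Omega$ (because the Bergman spaces agree), the Bergman metric of $\Omega$, hence of $D$, stays bounded near $F^{-1}(p_0) \in \partial D$. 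Step 3: derive a contradiction with the $C^0$ boundary hypothesis on $D$: show that a bounded domain with $C^0$ boundary at a point $q \in \partial D$ has $K_D(z) \to \infty$ (equivalently, the Bergman metric blows up) as $z \to q$ within $D$. I expect \textbf{this step to be the main obstacle}, since $C^0$ boundary is quite weak (no pseudoconvexity, no regularity beyond continuity of the defining graph); the tool I would reach for is the Ohsawa--Takegoshi extension theorem or a direct construction of many linearly independent $L^2$-forms concentrating near $q$ using the local graph description — e.g.\ pulling back $L^2$ functions on a half-space-like model via the homeomorphism, or using that the complement of $D$ near $q$ contains a nontrivial ``blocking'' set (the region $y \ge \phi(x)$) that is fat enough in the sense of Pflug--Zwonek to force divergence of the Bergman kernel, which is essentially the contrapositive of \cite[Lemma 11]{PZ02}. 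Step 4: conclude $E \subset \partial\mathbb{B}^n$; but then $\Omega = \mathbb{B}^n \setminus E = \mathbb{B}^n$ (removing a subset of the sphere from the open ball changes nothing, as $E \cap \mathbb{B}^n = \emptyset$), so $D \cong \mathbb{B}^n$. Thus the only genuine content beyond Theorem \ref{main theorem - negative constant case} is the boundary blow-up statement for $C^0$-boundary domains in Step 3, which I would isolate as a lemma and prove via a localized Bergman kernel estimate.
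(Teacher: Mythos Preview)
Your Step 3 is false as stated, and this is a fatal gap. Take the spherical shell $S = \{z \in \mathbb{C}^n : 1/2 < |z| < 1\}$ with $n \geq 2$: it is a bounded domain with smooth (hence $C^0$) boundary, yet by Hartogs every holomorphic function on $S$ extends across the inner ball, and a computation with the monomial basis gives $K_S(z,z) \leq (1-4^{-n})^{-1} K_{\mathbb{B}^n}(z,z)$, which stays bounded as $z$ approaches the inner sphere. So neither $K_S$ nor the Bergman metric blows up at those $C^0$ boundary points. The Pflug--Zwonek lemma you cite requires pseudoconvexity, which $C^0$ boundary does not imply, and the Ohsawa--Takegoshi route faces the same obstruction.

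Even setting Step 3 aside, Step 2 has a hidden problem. Writing $G = F^{-1}\colon \Omega \to D$, boundedness of $D$ lets you extend $G$ holomorphically to $\mathbb{B}^n$, so $q := G(p_0) \in \partial D$ makes sense; but the transformation law reads $K_D(G(w)) = K_\Omega(w)\,|\det G'(w)|^{-2}$, and if $p_0$ lies in the zero locus $V_G$ of $\det G'$ the right side can still blow up even though $K_\Omega(w) \to K_{\mathbb{B}^n}(p_0) < \infty$. Your argument would therefore at best yield $E \setminus V_G = \emptyset$, leaving the hypervariety case untouched.

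The paper's proof is purely topological. For $p_0 \in E \setminus V_G$ it observes that $G$ is a local biholomorphism at $p_0$, so some full $\mathbb{C}^n$-neighborhood of $q = G(p_0)$ is contained in $G(\mathbb{B}^n) \subset D \cup \partial D$; but a $C^0$ boundary point has \emph{exterior} points of $D$ in every neighborhood, a contradiction (Proposition \ref{pro needed for the proof of Corolllary 2 and Corollary 3}). This forces $E = V_G$. For the remaining hypervariety case the paper first shows (Proposition \ref{simply-connected region from C0 boundary}) that a $C^0$ boundary point admits a simply connected one-sided neighborhood $O \cap D$, and then argues by monodromy (Proposition \ref{Proposition variety cannot map to C0 boundary}): a small loop around a smooth point of $V_G$ is not null-homotopic in $\mathbb{B}^n \setminus V_G$, yet its $G$-image lies in the simply connected $O \cap D$, contradicting that $G|_{\mathbb{B}^n\setminus V_G}$ is a biholomorphism onto $D$. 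Hence $V_G = \emptyset$ and $E = \emptyset$.
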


Let $g_D$ be the Bergman metric of a domain $D$ and let
$H(g_D)(z, X)$ denote the holomorphic sectional curvature of $g_D$ at $z \in D$ of a vector $X \in T_zD\setminus\{0\}$ (see Kobayashi \cite{K59} for the precise definition).

%Part (\ref{case co22}) of Corollary \ref{cor2} immediately yields

\begin{cor}\label{cor3}
Let $D \subset \mathbb{C}^n$ be a bounded domain with Bergman metric $g_D$. Suppose that there is a boundary orbit accumulation point $q \in \partial D$ and $\tau \in \mathbb{R}$ such that
\begin{equation}\label{eqnhsc}
\lim_{z \to q} \sup_{X \in T_z D  \setminus\{0\}} \left| H(g_D)(z, X) - \tau \right| = 0.
\end{equation}
Moreover, suppose that there exists a neighborhood $O$ of $q$ in $\mathbb{C}^n$ such that $\partial D \cap O$ contains no (nontrivial) complex varieties.
%\begin{enumerate}[(a)]
%\item\label{case coraut1}
%\item\label{case coraut2}
%There exists $\tau_2 \in \mathbb{R}$ and a neighborhood $O_2$ of $q$ in $\mathbb{C}^n$ such that
%$$
%\lim_{z \to w}\sup_{X \in T_zD \setminus\{0\}}\left| H(g_D)(z, X) - \tau_2\right| = 0,
%$$
%for every $w \in \partial D \cap O_2$.
%\end{enumerate}
Then, $\tau = -2(n + 1)^{-1}$ and $D$ is biholomorphic to $\mathbb{B}^n$ less a Bergman-negligible subset. If, in addition, $D$ has $C^0$-boundary at $q$, then $D$ is biholomorphic to $\mathbb{B}^n$.
 %Assume either condition (a) or (b) holds:
\end{cor}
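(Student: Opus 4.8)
The plan is to derive Corollary~\ref{cor3} from the main theorem (in the form of Theorem~\ref{main theorem - negative constant case}) together with a localization argument for the Bergman metric near the distinguished boundary point $q$. The chain of reasoning proceeds in three stages: first, use the boundary orbit accumulation point to produce a limiting domain on which the Bergman metric has \emph{exactly} constant holomorphic sectional curvature $\tau$; second, invoke Theorem~\ref{main theorem - negative constant case} to conclude $\tau = -2(n+1)^{-1}$ and that the limiting domain is $\mathbb{B}^n$ less a Bergman-negligible subset; third, push this information back to $D$ itself, and upgrade the conclusion to a genuine biholomorphism with $\mathbb{B}^n$ when $D$ has $C^0$-boundary at $q$.

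Here are the steps in order. \textbf{Step 1 (Normal families / renormalization).} Let $\phi_j \in \mathrm{Aut}(D)$ and $p \in D$ with $\phi_j(p) \to q$. Because the Bergman metric is an invariant metric and (by the hypothesis \eqref{eqnhsc}) its holomorphic sectional curvature tends uniformly to $\tau$ as one approaches $q$, pulling back by $\phi_j$ we find that the holomorphic sectional curvature of $g_D$ at $p$, computed in the ``moving frame'' $d\phi_j$, is squeezed toward $\tau$; since $\phi_j$ are automorphisms this forces $H(g_D)(p, X) \equiv \tau$ for \emph{every} $p \in D$ and every $X$. Indeed, for any fixed $p'\in D$, pick automorphisms carrying $p'$ near $q$ and use invariance of $H$ under $\mathrm{Aut}(D)$ together with \eqref{eqnhsc}. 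Thus $g_D$ has constant holomorphic sectional curvature $\tau$ on all of $D$. (The hypothesis that $\partial D \cap O$ contains no nontrivial complex varieties is what guarantees the orbit genuinely escapes to the boundary in a way that lets us run this squeezing; it also rules out the degenerate limiting behavior where the rescaled domains collapse.) \textbf{Step 2 (Apply the main theorem).} Now $D$ is a bounded domain whose Bergman metric has constant holomorphic sectional curvature $\tau$, so Theorem~\ref{main theorem - negative constant case} applies directly: $\tau = -2(n+1)^{-1}$ and $D$ is biholomorphic to $\Omega = \mathbb{B}^n \setminus E$ with $E$ of Lebesgue measure zero and every $L^2$-holomorphic function on $\Omega$ extending to $\mathbb{B}^n$; equivalently, $D$ is biholomorphic to $\mathbb{B}^n$ less a Bergman-negligible subset. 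This already gives the first assertion of the corollary.

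\textbf{Step 3 (Removing the negligible set under $C^0$-boundary at $q$).} Suppose in addition $D$ has $C^0$-boundary at $q$. Let $\Psi\colon D \to \Omega = \mathbb{B}^n\setminus E$ be the biholomorphism from Step 2. The point is that $q$ is a boundary orbit accumulation point, hence a ``nice'' boundary point of $D$ from the metric point of view, and $C^0$-regularity at $q$ should force the image cluster set $\Psi(q)$ to be a single point $q^* \in \partial\mathbb{B}^n$ near which $\Psi$ extends continuously; moreover the orbit $\{\phi_j\}$ transported through $\Psi$ becomes a sequence in $\mathrm{Aut}(\Omega)$ accumulating at $q^* \in \partial\mathbb{B}^n$. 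Since automorphisms of $\mathbb{B}^n$ act transitively and $E \subset \partial\Omega$ has measure zero, one argues that $\Omega$ must actually be all of $\mathbb{B}^n$: if $E \neq \emptyset$, the automorphisms of $\mathbb{B}^n$ do not preserve $E$, so $\mathrm{Aut}(\Omega)$ would be too small to have a boundary orbit accumulation point at a smooth boundary point with the local $C^0$-structure — more precisely, the accumulating automorphisms would have to move points of $E$, contradicting that they are biholomorphisms of $\Omega = \mathbb{B}^n\setminus E$ onto itself. Hence $E = \emptyset$ and $D \cong \mathbb{B}^n$.

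I expect the main obstacle to be \textbf{Step 1}: making rigorous the renormalization/squeezing argument that transfers the asymptotic curvature pinching \eqref{eqnhsc} at $q$ into exact constancy of $H(g_D)$ on all of $D$, while correctly using the no-complex-varieties hypothesis to prevent the scaled domains (or the relevant metric quantities) from degenerating. The subtlety is that, a priori, one only knows \eqref{eqnhsc} holds in the limit $z \to q$, so one needs to exploit the non-compactness of $\mathrm{Aut}(D)$ — together with the fact that $H(g_D)(\cdot,\cdot)$ is an $\mathrm{Aut}(D)$-invariant function on the (sphere bundle of the) tangent bundle — to ``spread'' the pinching from a neighborhood of $q$ to the whole domain. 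A secondary difficulty is Step~3: ensuring the cluster set $\Psi(q)$ is a single boundary point and that the transported automorphisms genuinely accumulate there, which is where the $C^0$-boundary hypothesis at $q$ must be used carefully (e.g. via an invariant-metric or Carath\'eodory/Kobayashi estimate near $q$ to control the behavior of $\Psi$), and then ruling out a nonempty $E$ by an automorphism-group argument on $\mathbb{B}^n\setminus E$.
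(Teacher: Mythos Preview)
Your Steps~1 and~2 are essentially the paper's argument, though your Step~1 is imprecise in one place: you write ``for any fixed $p'\in D$, pick automorphisms carrying $p'$ near $q$,'' but the definition of boundary orbit accumulation point only gives you a single sequence $\{\phi_j\}$ and a single base point $p$. What you actually need is that the \emph{same} sequence $\phi_j$ converges locally uniformly to the constant $q$ on all of $D$; this is exactly where the no-complex-varieties hypothesis enters. The argument is: by Montel, a subsequence converges to some $\phi$; since $\phi(p)=q\in\partial D$, the standard dichotomy forces $\phi(D)\subset\partial D$; then $\phi$ maps a neighborhood of $p$ into $\partial D\cap O$, which contains no varieties, so $\phi\equiv q$. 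You have the right ingredients but the logical order should be sharpened.

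Your Step~3, however, has a genuine gap. The claim ``if $E\neq\emptyset$, the automorphisms of $\mathbb{B}^n$ do not preserve $E$, so $\mathrm{Aut}(\Omega)$ would be too small to have a boundary orbit accumulation point'' is false: the paper's own Example~\ref{example 2 for corollary 23} exhibits a domain $D_2$ biholomorphic to $\mathbb{B}^2\setminus\{z_2=0\}$ that \emph{does} have a boundary orbit accumulation point (and $\partial D_2$ contains no varieties). The automorphisms $\psi_j$ of $\Omega$ extend to $\mathrm{Aut}(\mathbb{B}^n)$ (by Calabi rigidity) and they \emph{do} preserve $E$; there is no contradiction from the automorphism-group size alone. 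Likewise, there is no reason to expect the cluster set of $\Psi$ at $q$ to be a single point of $\partial\mathbb{B}^n$, and the paper does not attempt this.

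The paper's actual Step~3 argument runs in the opposite direction. Take the biholomorphism $G\colon\Omega\to D$, extend it holomorphically to $\mathbb{B}^n$, and let $V_G\subset E$ be its critical locus. Two topological facts, both consequences of the $C^0$-boundary hypothesis at $q$, are established: (i) near a $C^0$-boundary point, $D$ has simply connected one-sided neighborhoods, which forces $G(V_G)$ to avoid the $C^0$-boundary piece $M_0\ni q$ (a monodromy obstruction); (ii) at points of $E\setminus V_G$, $G$ is a local biholomorphism, so $G(E\setminus V_G)$ cannot hit $M_0$ either (a $C^0$-boundary point has exterior points in every neighborhood). Thus $G(E)\subset\partial D\setminus M_0$. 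On the other hand, the transported automorphisms $\psi_j=F\circ\phi_j\circ G\in\mathrm{Aut}(\mathbb{B}^n)$ preserve $E$ and (because $\phi_j\to q$) push $G$-images of points of $E$ toward $q\in M_0$. This is the contradiction that forces $E=\emptyset$. Your outline should replace the automorphism-counting heuristic with this ``$G(E)$ avoids $M_0$ topologically, yet the dynamics force $G(E)$ toward $q$'' mechanism.
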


We remark that in Corollary \ref{cor3}, if $D$ has $C^2-$smooth and strongly pseudoconvex boundary at $q$, then (\ref{eqnhsc}) holds  with  $\tau = -2(n + 1)^{-1}$ by Kim and Yu \cite{KiYu96} (see also related results in \cite{Di70} and \cite{Kl78}). Thus, Corollary \ref{cor3} implies part (2) of Theorem B.

\begin{remark}\label{rmkcor23} Some further remarks are in order.
	\begin{enumerate}
		\item Corollary \ref{cor2} is optimal in the following sense. There exists a bounded domain $D$ such that its Bergman metric has constant holomorphic sectional curvature, $D$ has $C^0$-boundary at all points except one on $\partial D$, but $D$ is not biholomorphic to $\mathbb{B}^n$. A trivial example of such a domain is $\mathbb{B}^n \setminus \{0\}.$ A more interesting domain of this kind is constructed in Example \ref{example 2 for corollary 23}.
		
		\item Corollary \ref{cor3} is also optimal: There is a bounded domain which satisfies all conditions in Corollary \ref{cor3}, except that it fails to have $C^0$-boundary at the boundary orbit accumulation point $q$ (while it has $C^0$-boundary at all other points). Furthermore, the domain is not biholomorphic to $\mathbb{B}^n$. See Example \ref{example 2 for corollary 23}.
	\end{enumerate}
\end{remark}

We conclude the introduction by discussing an additional application of our main theorem. Let $D$ be as in Theorem \ref{main theorem - negative constant case}.  Inspired by \cite[Theorem 2]{L65}, \cite[Theorem 1.2]{DW22a} and \cite[Theorem 1.5]{DW22b}, we  shall apply Theorem \ref{main theorem - negative constant case} to obtain an explicit formula in terms of the Bergman kernel for a biholomorphism from $D$ to a complex ellipsoid less a Bergman-negligible subset. Here, by complex ellipsoid, we mean, for some $n \times n$ positive definite Hermitian matrix $H = (h_{i\bar{j}})_{i, \bar{j} = 1}^n$, the domain defined by
\begin{equation}\label{Ellipsoid equation}
	E_H = \left \{\zeta = (\zeta_1,\ldots, \zeta_n) \in \mathbb{C}^n:\, \sum_{i, j = 1}^n \zeta_ih_{i\bar{j}}\bar{\zeta}_j < n + 1\right \}.
\end{equation}
The constant $n+1$ is simply for convenience, of course. Moreover, a complex ellipsoid can be subsequently holomorphically transformed to $\mathbb{B}^n$  by an affine linear map. To state our result, we recall the Bergman representative coordinates.  Let $D$ be a bounded domain in $\mathbb{C}^n$ with Bergman kernel $K = K_{D}$ and Bergman metric $g = (g_{i\bar{j}})_{i, \bar{j} = 1}^n$.  For any $p \in D$, the Bergman representative coordinates of $D$ associated to $p$ are given by $w = (w_1,\ldots, w_n)$ where
\begin{equation}\label{Bergman representative coordinates}
	w_i(z) = T_{p, i}(z) := \sum_{j=1}^n g^{i \bar{j}}(p)\left(\left.{\partial \over \partial \bar{\zeta}_j}\right|_{\zeta = p}\log K(z, \zeta) - \left.{\partial \over \partial \bar{\zeta}_j}\right|_{\zeta = p} \log K(\zeta, \zeta)\right), \quad 1 \leq i \leq n.
\end{equation}
Here $(g^{i \bar{j}})_{i, \bar{j} = 1}^n$ is the inverse of $(g_{i\bar{j}})_{i, \bar{j} = 1}^n$.  As is well-known, $T_p(z) := (T_{p, 1}(z),\ldots, T_{p, n}(z))$ is holomorphic on $D$ away from $\{z \in D:\, K(z, p) = 0\}$ and maps $p$ to 0. We write $E_{g(p)}$ for the ellipsoid as in \eqref{Ellipsoid equation} with $H=g(p)$. We shall prove (in Section \ref{sectioncor1}):

\begin{cor}\label{cor1ellipsoid}
Let $D \subset \mathbb{C}^n$ be a bounded domain with $p \in D$. Assume its Bergman metric $g = (g_{i\bar{j}})_{i, \bar{j} = 1}^n$ has constant holomorphic sectional curvature equal to $\tau$.
Then $\tau = -2(n + 1)^{-1}$. Moreover,
the map $T_p(z)$ in $\eqref{Bergman representative coordinates}$ gives a biholomorphism from $D$ to $E_{g(p)}$ less a Bergman-negligible subset.

\end{cor}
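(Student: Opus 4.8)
The plan is to deduce Corollary~\ref{cor1ellipsoid} from Theorem~\ref{main theorem - negative constant case} together with the classical fact that Bergman representative coordinates transform linearly under biholomorphisms. Theorem~\ref{main theorem - negative constant case} gives $\tau=-2(n+1)^{-1}$ immediately, and also produces a biholomorphism $F\colon D\to\Omega$ onto a domain $\Omega\subset\mathbb{B}^n$ with $E:=\mathbb{B}^n\setminus\Omega$ Bergman-negligible in $\mathbb{B}^n$ and $K_\Omega=K_{\mathbb{B}^n}|_\Omega$. Since $\mathrm{Aut}(\mathbb{B}^n)$ acts transitively and preserves Lebesgue-null sets, the $L^2$-extension property, and the property that the Bergman kernel agrees with $K_{\mathbb{B}^n}$, we may post-compose $F$ with a ball automorphism and assume $F(p)=0\in\Omega$.

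Two explicit computations on the ball are needed. Differentiating $K_{\mathbb{B}^n}(z,w)=\frac{n!}{\pi^n}(1-\langle z,w\rangle)^{-(n+1)}$ shows $(g_{\mathbb{B}^n})_{i\bar j}(0)=(n+1)\delta_{ij}$ and that the Bergman representative coordinates of $\mathbb{B}^n$ centered at $0$ are the identity, $T^{\mathbb{B}^n}_0(z)=z$. Moreover, since $K_\Omega$ and $K_{\mathbb{B}^n}$ agree on the (totally real, generic) diagonal of $\Omega\times\Omega$, they agree on all of $\Omega\times\Omega$; hence $g_\Omega=g_{\mathbb{B}^n}|_\Omega$ and the map $T^{\Omega}_0$, which depends only on $K_\Omega$ and $g_\Omega$, equals $T^{\mathbb{B}^n}_0|_\Omega$, i.e. $T^{\Omega}_0$ is the inclusion $\Omega\hookrightarrow\mathbb{B}^n$.

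The central step is the transformation law. Differentiating $\log K_D(z,\zeta)=\log K_\Omega(F(z),F(\zeta))+\log\det F'(z)+\overline{\log\det F'(\zeta)}$ in $\bar\zeta$ at $\zeta=p$ kills the holomorphic $\log\det$ terms; contracting with the inverse Bergman metric and using the tensorial identity $g_D(p)=F'(p)^{T}g_\Omega(0)\,\overline{F'(p)}$ then gives
\[
T^D_p(z)=A^{-1}\,T^{\Omega}_0(F(z))=A^{-1}F(z),\qquad A:=F'(p),
\]
valid on all of $D$ (indeed $K_D(z,p)=K_\Omega(F(z),0)\det F'(z)\overline{\det F'(p)}\neq 0$ everywhere, so $T^D_p$ is holomorphic on $D$). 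With $g_\Omega(0)=(n+1)I$ the metric identity becomes $g_D(p)=(n+1)A^{T}\overline A$, so $\sum_{i,j}\zeta_i\,(g_D(p))_{i\bar j}\,\bar\zeta_j=(n+1)|A\zeta|^2$ and hence $A^{-1}(\mathbb{B}^n)=E_{g_D(p)}$. Therefore $T^D_p(D)=A^{-1}(\Omega)=E_{g_D(p)}\setminus A^{-1}(E)$, and since $A^{-1}$ is an invertible linear map, $A^{-1}(E)$ is a relatively closed, Lebesgue-null subset of $E_{g_D(p)}$ across which every $L^2$-holomorphic function still extends (an affine change of variables only rescales the $L^2$ norm by the constant $|\det A|$), i.e. a Bergman-negligible subset of $E_{g_D(p)}$. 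This completes the proof.

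The main difficulty is computational: tracking transposes and conjugations through the Bergman-kernel and Bergman-metric transformation laws to confirm that the linear factor in $T^D_p=A^{-1}\circ T^{\Omega}_0\circ F$ is exactly $A^{-1}=(F'(p))^{-1}$ (and not a conjugate or conjugate-inverse of it), and handling the branch of $\log\det F'$ so that the differentiation step is legitimate. A convenient consistency check, which in fact determines the linear factor uniquely, is the standard normalization $d(T^D_p)|_p=\mathrm{id}$, forced by $\sum_j g^{i\bar j}(p)g_{k\bar j}(p)=\delta_{ik}$.
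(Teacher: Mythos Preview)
Your proof is correct and takes a genuinely different route from the paper. The paper first quotes results of Dong--Wong \cite{DW22a} to see that $K_D(z,p)\neq 0$ and $T_p(D)\subset E_{g(p)}$, then shows via a Bergman-kernel identity that $T_p$ is a local isometry from $(D,g_D)$ into $(E_{g(p)},g_{E_{g(p)}})$; composing with the linear map $L\colon E_{g(p)}\to\mathbb{B}^n$ and invoking Calabi's rigidity theorem, it concludes that $L\circ T_p=\phi\circ F$ for some $\phi\in\mathrm{Aut}(\mathbb{B}^n)$, from which injectivity of $T_p$ and the description of the image follow. You instead normalize $F(p)=0$, compute $T^{\mathbb{B}^n}_0=\mathrm{id}$ directly, and use the classical transformation law for Bergman representative coordinates to obtain the explicit formula $T^D_p=(F'(p))^{-1}\circ F$; the ellipsoid and its Bergman-negligible complement then drop out of the metric identity $g_D(p)=(n+1)A^T\bar A$. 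Your argument is more elementary in that it avoids both the Dong--Wong results and Calabi's theorem, at the cost of the index-bookkeeping you flag. The paper's approach, by contrast, packages the linear relation between $T_p$ and $F$ into an abstract rigidity statement and never needs the precise form of the linear factor. Either way the conclusion is the same: $T_p$ and $F$ differ by an invertible linear map carrying $\mathbb{B}^n$ to $E_{g(p)}$.
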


The paper is organized as follows. Section \ref{Preliminaries section} provides background on the moment problem along with the necessary preparations for proving Theorem \ref{main theorem - negative constant case}. In particular, our proof relies on certain parts of the work of Huang--Li \cite{HL24}, and we shall isolate these parts of their work in \S \ref{The part of their proof that we need}.  We then prove Theorem \ref{main theorem - negative constant case} in Section \ref{Section proof of theorem}. The proof is divided into three steps, presented in \S \ref{subsection moment problem theorems}, \S \ref{subsection equality measure zero}, and \S \ref{subsection finishing proof}, respectively. We establish Corollary \ref{cor2} and \ref{cor3} in Section \ref{sectioncor23} and give a few examples to justify Remark \ref{rmkcor23} in Section \ref{sectionexamples}. We conclude the paper with a proof of Corollary \ref{cor1ellipsoid} in Section \ref{sectioncor1}.

\medskip

\textbf{Acknowledgements: } The second author would like to thank Xiaojun Huang and Song-Ying Li for encouragement and for suggesting problems related to the Bergman metric to study. The authors would also like to thank Xiaojun Huang for many inspiring conversations during his visit to UCSD in the Winter Quarter of 2024.

\section{Preliminaries}\label{Preliminaries section}

\makeatletter
\renewcommand{\thetheorem}{\thesection.\arabic{theorem}}% Update counter printing
\@addtoreset{theorem}{subsection}% Reset theorem counter with every new subsection
\makeatother

\subsection{The moment problem}\label{moment problem subsection}

Our approach to the main theorem, Theorem \ref{main theorem - negative constant case}, is different from Huang-Li's proof of Theorem A (the negative holomorphic sectional curvature case in \cite{HL24}). We will utilize the (complex) moment problem. This was also employed in Huang-Li-Treuer's treatment of the identically zero holomorphic sectional curvature case (see the appendix of \cite{HL24}). However, unlike their treatment, the key idea here is to integrate the moment problem with the unitary action on the domain.

The moment problem asks, given a sequence of numbers $(s_{\alpha\beta})_{\alpha, \beta \in \mathbb{N}^n}$ called the moment data, does there exist a unique nonnegative Borel measure $\eta$ such that
\begin{equation}\label{definition of complex moment problem}
\int_{\mathbb{C}^n} z^{\alpha}\overline{z}^{\beta}\, d\eta = s_{\alpha\beta}, \quad ~\text{for all}~~ \alpha, \beta \in \mathbb{N}^n?
\end{equation}
The proof of Theorem \ref{main theorem - negative constant case} will lead naturally to a moment problem, which will have a solution $\eta$ of the form $|f|^2\chi_{\Omega} m_{2n}$ where $\chi_{\Omega}$ is the characteristic function of a domain $\Omega$, $f \in A^2(\Omega)$, and $m_{2n}$ denotes the Lebesgue measure on $\mathbb{R}^{2n}$.  All domains $\Omega$ under consideration will be contained in $\mathbb{B}^n \subset \mathbb{C}^n$.  If a measure $\eta$ solving \eqref{definition of complex moment problem} has compact support, then by an application of Weierstrass' approximation theorem it is unique \cite[Proposition 12.17]{Sc17}. For more on the moment problem we refer the reader to \cite{Sc17}.
\subsection{Huang and Li's work}\label{The part of their proof that we need}

In Huang-Li \cite[Theorem 4.1]{HL24}, the following is established:

\begin{theorem}[Huang-Li, \cite{HL24}]\label{The portion of Huang and Li's Theorem we need}   Let $M$ be a connected complex manifold satisfying the two conditions (a) and (b) in Remark \ref{general Huang Li}.  Suppose that the Bergman metric of $M$ has constant negative holomorphic sectional curvature.  Then
$M$ is biholomorphic to a domain $\Omega \subset \mathbb{B}^n$ containing zero and
\begin{equation}\label{Huang and Li's formula of the Bergman kernel}
	K_{\Omega}(z, w) = \phi(z)\overline{\phi(w)}K^{\lambda}_{\mathbb{B}^n}(z, w), \quad z, w \in \Omega,
\end{equation}
where $\phi \in A^2(\Omega)$ with $\phi(0) > 0$, and $\lambda$ is some positive constant.
	
%\begin{enumerate}[(1)]
    %\item\label{first part of Huang and Li} $M$ is biholomorphic to a domain $\Omega \subset \mathbb{B}^n$ containing zero and
%\begin{equation}\label{Huang and Li's formula of the Bergman kernel}
%K_{\Omega}(z, w) = \phi(z)\overline{\phi(w)}K^{\lambda}_{\mathbb{B}^n}(z, w), \quad z, w \in \Omega,
%\end{equation}
%where $\phi \in A^2(\Omega)$ with $\phi(0) > 0$;

%\item\label{second part of Huang and Li} $K_{\Omega}(z, 0)^{-1}$ extends holomorphically to $\mathbb{B}^n$, and any function $h \in A^2(\Omega)$ extends holomorphically to $\mathbb{B}^n \setminus \{z \in \mathbb{B}^n:\, K_{\Omega}(z, 0)^{-1} = 0\}.$
%\end{enumerate}

\end{theorem}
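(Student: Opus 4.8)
The plan is to produce the biholomorphism from the Bergman representative coordinate map of \eqref{Bergman representative coordinates} and to extract \eqref{Huang and Li's formula of the Bergman kernel} from the curvature hypothesis. Write $\tau<0$ for the constant holomorphic sectional curvature and set $\lambda:=-\tfrac{2}{(n+1)\tau}>0$, chosen so that $\lambda g_{\mathbb{B}^n}$ has holomorphic sectional curvature exactly $\tau$. Fix $p\in M$; by condition (a), $K_M(p,p)>0$, and after a linear change of holomorphic coordinates normalizing $g(p)$ (followed, if necessary, by a dilation) the resulting representative coordinate map $F$ is holomorphic on the connected open set $M_p:=\{z\in M:\ K_M(z,p)\ne 0\}$ with $F(p)=0$. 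The crux is to show, \emph{using constant holomorphic sectional curvature}, that on $M_p$ the map $F$ is a local biholomorphism into $\mathbb{B}^n$ with $g=F^{\,*}(\lambda g_{\mathbb{B}^n})$. One way to organize this: invoke the classical local normal form for K\"ahler metrics of constant holomorphic sectional curvature (locally they are pullbacks of the model under holomorphic immersions, going back to Calabi), and then check, along the lines of Lu's original computation \cite{L65}, that in representative coordinates the identity ``curvature $\equiv\tau$'' forces the Taylor expansion of $\log K_M$ about each point of $M_p$ to agree, up to a pluriharmonic summand, with that of $\lambda\log K_{\mathbb{B}^n}$, so that the local immersion may be taken to be $F$ itself; in particular $F(M_p)\subset\mathbb{B}^n$.

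Granting this, $\partial_{z_i}\partial_{\bar w_j}\bigl[\log K_M(z,w)-\lambda\log K_{\mathbb{B}^n}(Fz,Fw)\bigr]$ vanishes on the diagonal $w=z$, since both terms equal $g_{i\bar j}$ there; being holomorphic in $z$ and antiholomorphic in $w$, it then vanishes on all of $M_p\times M_p$, so the bracket equals $a(z)+\overline{b(w)}$ for holomorphic $a,b$, and its reality on the diagonal forces $a-b$ to be a real constant. Exponentiating --- absorbing a positive constant, then a unimodular one --- gives $K_M(z,w)=\phi(z)\overline{\phi(w)}K^{\lambda}_{\mathbb{B}^n}(Fz,Fw)$ on $M_p\times M_p$, with $\phi$ nowhere-vanishing holomorphic on $M_p$ and $\phi(p)>0$. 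Since $F(M_p)$ is bounded, $F$ is locally bounded near the analytic set $M\setminus M_p$ and extends holomorphically to $M$, still with image in $\mathbb{B}^n$ (otherwise $|F|^2$, which is plurisubharmonic, would attain an interior maximum, contradicting $F(p)=0$); as $K_M(\cdot,p)\in A^2(M)$ is holomorphic on all of $M$ and is a nonzero constant multiple of $\phi$ on $M_p$ (put $w=p$), the function $\phi$ also extends holomorphically to $M$ and the kernel identity persists on $M\times M$. But if $K_M(\cdot,p)$ vanished at some $z_0$, so would $\phi$, forcing $K_M(z_0,z_0)=|\phi(z_0)|^2K^{\lambda}_{\mathbb{B}^n}(Fz_0,Fz_0)=0$, contradicting condition (a); hence $M_p=M$ and $\phi$ is nowhere zero. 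Finally $F$ is injective: $F(z_1)=F(z_2)$ makes $K_M(z_1,\cdot)$ a scalar multiple of $K_M(z_2,\cdot)$ in $A^2(M)$, which contradicts condition (b) unless $z_1=z_2$. Therefore $\Omega:=F(M)\subset\mathbb{B}^n$ is a domain containing $0$, $F\colon M\to\Omega$ is a biholomorphism, and $\phi$ --- a constant multiple of $K_M(\cdot,p)$, hence in $A^2(\Omega)$, with $\phi(0)>0$ --- realizes \eqref{Huang and Li's formula of the Bergman kernel}.

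I expect the main obstacle to be the local-to-global passage in the first paragraph. Constant holomorphic sectional curvature is a purely local condition, pinning the Bergman metric down only up to local isometry with $(\mathbb{B}^n,\lambda g_{\mathbb{B}^n})$; the genuine work is to show that the canonical, globally defined representative coordinate map is the correct developing map and --- crucially --- that its image stays inside $\mathbb{B}^n$ rather than spilling past $\partial\mathbb{B}^n$. Had the Bergman metric been assumed complete (as in Lu's Theorem~0 \cite{L65}, but not here), $F$ would be a covering of $\mathbb{B}^n$ and hence a biholomorphism, and these difficulties would be immediate; without completeness one argues as above, exploiting that $F$ is single-valued, that its image is bounded, and that $K_M$ is strictly positive on the diagonal.
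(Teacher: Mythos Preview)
The paper does not prove this statement; it is quoted as \cite[Theorem~4.1]{HL24} and used as a black box, with the paper's own contribution beginning only from \eqref{Huang and Li's formula of the Bergman kernel}. What you have written is therefore not a comparison candidate but a reconstruction of the Huang--Li argument, and in broad outline it is the right one: build the Bergman representative map $F$ at a base point $p$, use Lu's computation \cite[Lemma~3]{L65} (or its refinement in \cite{DW22a}) to see that constant holomorphic sectional curvature forces $\log K_M-\lambda\log K_{\mathbb B^n}(F\cdot,F\cdot)$ to be pluriharmonic, extract the factorization, then use condition~(a) to rule out zeros of $K_M(\cdot,p)$ and condition~(b) for injectivity. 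The paper in fact runs a closely related argument in \S\ref{sectioncor1} to prove Corollary~\ref{cor1ellipsoid}, though there it can already invoke Theorem~\ref{main theorem - negative constant case} and so obtains injectivity of $T_p$ via Calabi rigidity rather than via~(b).

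Your self-diagnosis is accurate: the only non-routine step is the first paragraph, specifically the assertions that $F(M_p)\subset\mathbb B^n$ and $g=F^*(\lambda g_{\mathbb B^n})$. This is precisely Lu's key computation, and "along the lines of Lu's original computation" is carrying all the weight; a precise citation such as \cite[Theorem~1.2]{DW22a} would close that gap. One minor slip: when you transport $K_M=\phi\overline\phi\,K_{\mathbb B^n}^\lambda(F\cdot,F\cdot)$ from $M$ to $\Omega$ via $F$, the Bergman kernel transforms with the Jacobian, so the function realizing \eqref{Huang and Li's formula of the Bergman kernel} on $\Omega$ is $(\phi\circ F^{-1})/\det(DF\circ F^{-1})$, not $\phi\circ F^{-1}$. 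This does not affect the conclusion---it is still a nonzero constant multiple of $K_\Omega(\cdot,0)\in A^2(\Omega)$ with positive value at $0$---but the phrase ``a constant multiple of $K_M(\cdot,p)$, hence in $A^2(\Omega)$'' elides the Jacobian.
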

%They then show that $\partial \Omega \cap \mathbb{B}^n = L_1 \cup L_2$ where $L_1$ is a (possibly empty) closed pluripolar set and
    %$$
    %L_2 \subset \{z_0 \in \partial \Omega: \limsup_{z \in \Omega, z \to z_0} K(z, z) < \infty\}.
    %$$
%Only at this point do Huang and Li assume $M$ is Stein in order to conclude that $L_2$ is also a closed pluripolar set.  Subsequently, they reach the conclusion of their theorem.

In our proof of Theorem \ref{main theorem - negative constant case}, the starting point will be Theorem \ref{The portion of Huang and Li's Theorem we need}.  Specifically we will begin by using \eqref{Huang and Li's formula of the Bergman kernel} to set up a complex moment problem and use the moment data to infer the desired geometric properties of $\Omega$.  This is the essence of Section \ref{subsection moment problem theorems} below.  %In Section \ref{Explicit determination of the Bergman space section}, we utilize Theorem \ref{The portion of Huang and Li's Theorem we need} \eqref{second part of Huang and Li}.

%\subsection{Additional Notation}\label{notation subsection}

\section{Proof of the main theorem }\label{Section proof of theorem}

As was mentioned in the introduction, by using Huang--Li's work, Theorem \ref{The portion of Huang and Li's Theorem we need}, we can prove a more general version of Theorem \ref{main theorem - negative constant case}, which we state as follows:

\begin{theorem}\label{general version main theorem}
Let $M$ be a connected complex manifold satisfying the two conditions (a) and (b) in Remark \ref{general Huang Li}. %with a well-defined Bergman metric $g$.
Then the following are equivalent:
\begin{enumerate}[(1)]
	\item[{\rm (1)}]\label{case g1} The Bergman metric of $M$ has constant negative holomorphic sectional curvature $\tau$;
	
	\item[{\rm (2)}]\label{case g2} $M$ is biholomorphic to a domain $\Omega \subset \mathbb{B}^n$, where $E := \mathbb{B}^n\setminus\Omega$ is a set of Lebesgue measure zero, and every $L^2$-holomorphic function on $\Omega$ extends to an $L^2$-holomorphic function on $\mathbb{B}^n$.
\end{enumerate}

Moreover, if either case holds, then $\tau = -{2(n + 1)^{-1}}$ and the Bergman kernel of $\Omega$ is the restriction of the Bergman kernel of $\mathbb{B}^n$ to $\Omega$.
\end{theorem}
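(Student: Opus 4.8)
The plan is to prove the two implications separately. The implication $(2)\Rightarrow(1)$ is a short reproducing‑kernel argument: if $M$ is biholomorphic to $\Omega\subset\mathbb{B}^n$ with $E=\mathbb{B}^n\setminus\Omega$ of Lebesgue measure zero and with the stated $L^2$‑extension property, then, because $|E|=0$, the restriction map $R\colon A^2(\mathbb{B}^n)\to A^2(\Omega)$, $f\mapsto f|_\Omega$, is a linear isometry, and by the extension hypothesis it is onto, hence a unitary isomorphism of reproducing‑kernel Hilbert spaces. For each $w\in\Omega$, unitarity of $R$ turns the reproducing identity of $K_{\mathbb{B}^n}$ at $w$ into a reproducing identity for $A^2(\Omega)$, so $K_{\mathbb{B}^n}(\cdot,w)|_\Omega=K_\Omega(\cdot,w)$; thus $K_\Omega=K_{\mathbb{B}^n}|_{\Omega\times\Omega}$. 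In particular $\Omega$ carries a well‑defined Bergman metric, namely $i\partial\bar\partial\log K_{\mathbb{B}^n}$ restricted to $\Omega$, which has constant holomorphic sectional curvature $-2(n+1)^{-1}$; since the Bergman metric is a biholomorphic invariant, $(1)$ follows with $\tau=-2(n+1)^{-1}$.

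For $(1)\Rightarrow(2)$ I would argue as outlined in \S\ref{moment problem subsection}. The constant $\tau$ is negative by the discussion preceding Theorem A, so Theorem \ref{The portion of Huang and Li's Theorem we need} applies and yields a domain $\Omega\subset\mathbb{B}^n$ with $0\in\Omega$, a constant $\lambda>0$, and $\phi\in A^2(\Omega)$ with $\phi(0)>0$ such that \eqref{Huang and Li's formula of the Bergman kernel} holds, where $K^\lambda_{\mathbb{B}^n}(z,w)=(n!/\pi^n)^\lambda(1-\langle z,w\rangle)^{-\lambda(n+1)}$ with the principal branch (legitimate since $\mathrm{Re}(1-\langle z,w\rangle)>0$ on $\mathbb{B}^n\times\mathbb{B}^n$). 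For any polynomial $p$ we have $p\phi\in A^2(\Omega)$ since $\Omega$ is bounded; applying the reproducing property of $K_\Omega$ to $p\phi$, dividing out the factor $\phi(w)$ on the nonempty open set $\{\phi\neq0\}$, and extending to $\mathbb{B}^n$ by the identity theorem yields
\[
p(w)=\Big(\tfrac{n!}{\pi^n}\Big)^{\lambda}\int_{\Omega} p(z)\,|\phi(z)|^2\,(1-\langle w,z\rangle)^{-\lambda(n+1)}\,dm_{2n}(z),\qquad w\in\mathbb{B}^n .
\]
Expanding the kernel as $\sum_{k\ge0}\binom{\lambda(n+1)+k-1}{k}\langle w,z\rangle^k$ (absolutely, with the dominating bound $(1-|\langle w,z\rangle|)^{-\lambda(n+1)}\le(1-|w|)^{-\lambda(n+1)}$ valid for $w$ in a compact subset of $\mathbb{B}^n$ and $z\in\Omega$), interchanging sum and integral, putting $p(z)=z^\beta$, and comparing Taylor coefficients in $w$, we obtain the moment data of the compactly supported positive measure $\eta:=|\phi|^2\chi_\Omega\,m_{2n}$:
\[
s_{\alpha\beta}:=\int_{\mathbb{C}^n}z^\beta\overline{z}^\alpha\,d\eta=\delta_{\alpha\beta}\,\frac{\alpha!\,\Gamma\big(\lambda(n+1)\big)}{(n!/\pi^n)^{\lambda}\,\Gamma\big(\lambda(n+1)+|\alpha|\big)} .
\]

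These moments are diagonal and depend only on $\lambda$. Summing over $|\alpha|=k$ with multinomial weights gives the radial moments $m_k:=\int|z|^{2k}\,d\eta=\frac{\Gamma(\lambda(n+1))}{(n-1)!\,(n!/\pi^n)^\lambda}\cdot\frac{\Gamma(k+n)}{\Gamma(k+\lambda(n+1))}\sim C\,k^{\,n-\lambda(n+1)}$ with $C>0$; on the other hand $\Omega\subset\mathbb{B}^n$ forces $m_k\to0$ by dominated convergence, so $n-\lambda(n+1)<0$, i.e.\ $\lambda>n/(n+1)$. For such $\lambda$ we have $s:=(\lambda-1)(n+1)>-1$, so $(1-|z|^2)^s$ is integrable on $\mathbb{B}^n$, and a Beta‑integral computation shows that the rotation‑invariant measure $\eta_0:=c_0\,(1-|z|^2)^{s}\chi_{\mathbb{B}^n}\,m_{2n}$, with $c_0=\frac{\Gamma(\lambda(n+1))}{\pi^n(n!/\pi^n)^\lambda\,\Gamma(s+1)}>0$, has exactly the moments $s_{\alpha\beta}$ (its off‑diagonal moments vanish by rotation invariance). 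Since $\eta$ and $\eta_0$ both have compact support, the uniqueness of the compactly supported solution of the moment problem \cite[Proposition 12.17]{Sc17} gives $\eta=\eta_0$. Hence $|\mathbb{B}^n\setminus\Omega|=0$, and on $\Omega$ we have $|\phi|^2=c_0(1-|z|^2)^{s}$; but $\log|\phi|^2$ is pluriharmonic wherever $\phi\neq0$, i.e.\ on all of $\Omega$, while $i\partial\bar\partial\log(1-|z|^2)=-\tfrac1{n+1}\,i\partial\bar\partial\log K_{\mathbb{B}^n}\neq0$, so $s=0$, i.e.\ $\lambda=1$; then $c_0=1$, $|\phi|\equiv1$ on the connected set $\Omega$, and $\phi\equiv1$ because $\phi(0)>0$. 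Therefore $K_\Omega=K_{\mathbb{B}^n}|_{\Omega\times\Omega}$, which gives $\tau=-2(n+1)^{-1}$ and the last assertion of the theorem. Finally, the restriction $A^2(\mathbb{B}^n)\to A^2(\Omega)$ is an isometry whose image is closed and contains every $K_{\mathbb{B}^n}(\cdot,w)|_\Omega=K_\Omega(\cdot,w)$ with $w\in\Omega$, hence is dense, hence all of $A^2(\Omega)$; so each $f\in A^2(\Omega)$ is the restriction of a unique $g\in A^2(\mathbb{B}^n)$, the sought $L^2$‑holomorphic extension.

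The step I expect to be the main obstacle is controlling the auxiliary parameter $\lambda$ produced by Theorem \ref{The portion of Huang and Li's Theorem we need}: a priori only $\lambda>0$ is known, whereas the comparison measure $\eta_0$ one wishes to match $\eta$ against is a genuine positive measure on $\mathbb{B}^n$ only when $\lambda>n/(n+1)$. The growth estimate $m_k\sim Ck^{\,n-\lambda(n+1)}$ combined with $m_k\to0$ — which relies essentially on $\eta$ being absolutely continuous and supported strictly inside the ball — is exactly what excludes $\lambda\le n/(n+1)$ and closes this gap. The remaining technical points, namely justifying the term‑by‑term integration in the kernel expansion and checking the hypotheses of the moment‑uniqueness theorem, are routine once compact support is secured.
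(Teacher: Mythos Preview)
Your argument is correct, and it reaches the conclusion by a genuinely different route than the paper. Both proofs start from Theorem~\ref{The portion of Huang and Li's Theorem we need} and set up the moment problem for $\eta=|\phi|^2\chi_\Omega\,m_{2n}$, but diverge from there. The paper exploits the unitary action: for each unitary $U$ it forms $\eta_U=|\phi_U|^2\chi_{\Omega_U}m_{2n}$, observes that $\eta_U$ solves the same moment problem as $\eta$, and deduces $\eta_U=\eta$ and hence $\chi_\Omega=\chi_{\Omega_U}$ a.e. It then uses a Lebesgue density point argument to show that a complement of positive measure would force an entire sphere $\{|z|=r\}$ to miss $\Omega$, disconnecting it; only afterwards does it pin down $\phi\equiv1$ and $\lambda=1$ by comparing the norms of the monomials on $\Omega$ and on $\mathbb{B}^n$. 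You instead write down an explicit comparison measure $\eta_0=c_0(1-|z|^2)^{(\lambda-1)(n+1)}\chi_{\mathbb{B}^n}m_{2n}$ with the same moments, invoke moment uniqueness once to get $\eta=\eta_0$, and then use that $\log|\phi|^2$ is pluriharmonic to force $s=0$. This is shorter and avoids the unitary action, the density point argument, and the disconnection step entirely; the only extra ingredient you need is the a priori bound $\lambda>n/(n+1)$, which you obtain neatly from $m_k\to0$. The paper's approach, by contrast, never needs to know $\lambda>n/(n+1)$ in advance and extracts $\lambda=1$ at the very end from norm comparison; its unitary invariance viewpoint is perhaps more geometric, but your direct identification of the weighted ball measure is the more efficient path.
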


In the following subsections,  we shall prove Theorem \ref{general version main theorem}. For that, we only need to show (1) implies (2), as the other implication is trivial. Assuming (1), by Theorem \ref{The portion of Huang and Li's Theorem we need}, $M$ is biholomorphic to a domain $\Omega \subset \mathbb{B}^n$ containing zero, where the Bergman kernel $K_{\Omega}$ satisfies (\ref{Huang and Li's formula of the Bergman kernel}). Throughout the rest of the proof, we will use the following terminology.
\begin{itemize}
	\item Throughout Section \ref{Section proof of theorem}, $\Omega, \phi, \lambda$ will refer to the objects described in Theorem \ref{The portion of Huang and Li's Theorem we need}.
	\item $\mu = (n + 1)\lambda$.
	\item $U$ will be a unitary linear map, $\Omega_U = U(\Omega) \subset \mathbb{B}^n$ and $\phi_U(z) = (\phi \circ U^{-1})(z)$ for $z \in \Omega_U$.  Additionally, we extend the definition of $\phi_U$ to $\mathbb{C}^n$ by setting it equal to 0 on the complement of $\Omega_U$.
	\item $\eta_U$ denotes the measure defined by $\eta_U = |\phi_U|^2\chi_{\Omega_U}m_{2n}$ and $\eta = \eta_{Id}$.
	\item $\mathbb{B}^n(z_0, r)$ will denote the ball in $\mathbb{C}^n$ with center $z_0$ of radius $r$ (and as before, $\mathbb{B}^n$ denotes the unit ball).
	%\item As before, $\mathrm{Aut}(D)$ denotes the group of (biholomorphic) automorphisms of a domain $D$.
\end{itemize}

\subsection{Moment data from the Bergman kernel of $\Omega_U$}\label{subsection moment problem theorems}

In this section, we first construct an orthonormal basis for the domains $\Omega_U$.  Then we employ the moment theory to show that the measures $\{|\phi_U|^2\chi_{\Omega_U}m_{2n}\}$ are all the same and then make a crucial observation on the geometry of $\Omega$.

\begin{lem}\label{orthonormal basis of Bergman space}
For any unitary map $U$, the Bergman kernel of $\Omega_U$ is given by
    \begin{equation}\label{unitarily transformed bergman kernel written neater}
    K_{\Omega_U}(z, w) = \phi_{U}(z)\overline{\phi_U(w)}K_{\mathbb{B}^n}^{\lambda}(z, w), \quad z, w \in \Omega_U.
\end{equation}
Moreover, an orthonormal basis of the Bergman space $A^2(\Omega_U)$ is given by $\{c_{\alpha, \lambda}w^{\alpha}\phi_U(w)\}_{\alpha \in \mathbb{N}^n}$ where
    \begin{equation}\label{definition of calpha}
    c_{\alpha, \lambda} = \begin{cases}
        \sqrt{\left(n! \over \pi^n\right)^{\lambda}} & \alpha = \overrightarrow{0},
        \\
        \sqrt{\left(n! \over \pi^n\right)^{\lambda}{\mu \ldots (\mu + |\alpha| - 1) \over \alpha!}} & \alpha \neq \overrightarrow{0}.
    \end{cases}
    \end{equation}
    In particular,
    \begin{equation}\label{moment problem with phi}
{\delta_{\alpha\beta} \over c^2_{\alpha, \lambda}} = \int_{\mathbb{C}^n}w^{\alpha}\overline{w}^{\beta}\left( |\phi_U(w)|^2\chi_{\Omega_U}(w)dm_{2n}(w) \right), \quad \alpha, \beta \in \mathbb{N}^{n}.
\end{equation}

\end{lem}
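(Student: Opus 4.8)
The plan is to start from formula \eqref{unitarily transformed bergman kernel written neater} for $K_{\Omega_U}$ and unwind what it says about the Bergman space $A^2(\Omega_U)$. First I would recall that $K_{\mathbb{B}^n}^\lambda(z,w) = \left(\tfrac{n!}{\pi^n}\right)^\lambda (1-\langle z,w\rangle)^{-\mu}$ with $\mu = (n+1)\lambda$, and expand the binomial series $(1-\langle z,w\rangle)^{-\mu} = \sum_{k\geq 0}\binom{\mu+k-1}{k}\langle z,w\rangle^k$, which converges locally uniformly on $\mathbb{B}^n\times\mathbb{B}^n$. Expanding $\langle z,w\rangle^k$ by the multinomial theorem regroups this as $\sum_{\alpha\in\mathbb{N}^n} d_{\alpha}\, z^\alpha\overline{w}^\alpha$ for suitable positive constants $d_\alpha$ (with $d_{\overrightarrow 0}=1$ and $d_\alpha = \tfrac{\mu\cdots(\mu+|\alpha|-1)}{\alpha!}$ for $\alpha\neq\overrightarrow 0$, coming from $\binom{\mu+|\alpha|-1}{|\alpha|}\cdot\tfrac{|\alpha|!}{\alpha!}$). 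Multiplying by $\phi_U(z)\overline{\phi_U(w)}$ gives
\[
K_{\Omega_U}(z,w) = \sum_{\alpha\in\mathbb{N}^n} \left(\tfrac{n!}{\pi^n}\right)^\lambda d_\alpha\, z^\alpha\phi_U(z)\,\overline{w^\alpha\phi_U(w)} = \sum_{\alpha\in\mathbb{N}^n} c_{\alpha,\lambda}^2\, z^\alpha\phi_U(z)\,\overline{w^\alpha\phi_U(w)},
\]
which is exactly the kernel written in terms of the candidate basis $\{c_{\alpha,\lambda}w^\alpha\phi_U(w)\}_\alpha$.

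The main work is then to verify that $\{c_{\alpha,\lambda}w^\alpha\phi_U(w)\}_{\alpha\in\mathbb{N}^n}$ is genuinely an orthonormal basis of $A^2(\Omega_U)$, not merely that the series reproduces $K_{\Omega_U}$. Here is where I would invoke the standard uniqueness feature of the Bergman kernel. Since $\phi_U\in A^2(\Omega_U)$ (this is part of Theorem \ref{The portion of Huang and Li's Theorem we need}, transported by the unitary map) and $\Omega_U\subset\mathbb{B}^n$ is bounded, each $w^\alpha\phi_U(w)$ lies in $A^2(\Omega_U)$. Let $\mathcal{H}\subset A^2(\Omega_U)$ be the closed subspace they span. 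The reproducing kernel of $\mathcal{H}$ is computed from any orthonormal basis of $\mathcal{H}$; if I can show the given system is orthonormal, its reproducing kernel is the series above, which equals $K_{\Omega_U}$. A reproducing-kernel Hilbert space is determined by its kernel, so $\mathcal{H}=A^2(\Omega_U)$ and the system is an orthonormal basis. To get orthonormality I would exploit the unitary invariance: for $U=Id$, the ball $\mathbb{B}^n$ is invariant under the torus action $z\mapsto e^{i\theta}z$ for a single phase, and more importantly under coordinate rotations $z\mapsto (e^{i\theta_1}z_1,\dots,e^{i\theta_n}z_n)$. However, $\Omega$ itself need not be invariant. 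The cleaner route is to compute the inner products $\langle c_{\alpha,\lambda}w^\alpha\phi_U, c_{\beta,\lambda}w^\beta\phi_U\rangle_{A^2(\Omega_U)}$ directly from the kernel identity: using the reproducing property of $K_{\Omega_U}$ against the functions $z^\alpha\phi_U(z)$ and matching Taylor coefficients in the $\overline{w}$ variable, one extracts precisely $\langle z^\alpha\phi_U, z^\beta\phi_U\rangle = \delta_{\alpha\beta}/c_{\alpha,\lambda}^2$. Concretely: $z^\alpha\phi_U(z) = \int_{\Omega_U} K_{\Omega_U}(z,w)\,\overline{w^\alpha\phi_U(w)}\,dm_{2n}(w)$; plugging in the series expansion of $K_{\Omega_U}$ and using that the monomials $\{z^\beta\}$ are linearly independent as holomorphic functions forces $c_{\beta,\lambda}^2\langle w^\beta\phi_U, w^\alpha\phi_U\rangle = \delta_{\alpha\beta}$, after justifying the interchange of sum and integral by local uniform convergence on compact subsets together with the $L^2$ bound.

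Finally, \eqref{moment problem with phi} is just the statement $\langle w^\alpha\phi_U, w^\beta\phi_U\rangle_{A^2(\Omega_U)} = \delta_{\alpha\beta}/c_{\alpha,\lambda}^2$ rewritten as an integral against the measure $|\phi_U(w)|^2\chi_{\Omega_U}(w)\,dm_{2n}(w)$ on $\mathbb{C}^n$, which is immediate once the basis property is established. The step I expect to be the main obstacle is the rigorous justification that the candidate system is \emph{complete} in $A^2(\Omega_U)$ — i.e., that no $L^2$-holomorphic function on $\Omega_U$ is orthogonal to all $w^\alpha\phi_U$ — since $\Omega_U$ can be a badly behaved (non-pseudoconvex, measure-zero complement) domain where one cannot blithely expand functions in power series; this is precisely why the argument is routed through the reproducing-kernel uniqueness rather than through a direct density argument. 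The interchange-of-sum-and-integral estimates are the other technical point, handled by dominated convergence after noting $\sum_\alpha c_{\alpha,\lambda}^2|z^\alpha\phi_U(z)|\,|w^\alpha\phi_U(w)| = |\phi_U(z)|\,|\phi_U(w)|\,(1-|\langle z,w\rangle|)^{-\mu}\left(\tfrac{n!}{\pi^n}\right)^\lambda$ is locally bounded.
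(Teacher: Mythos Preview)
Your approach is essentially the same as the paper's. The paper also plugs the closed form of $K_{\Omega_U}$ into the reproducing identity $z^{\alpha}\phi_U(z)=\int_{\Omega_U}K_{\Omega_U}(z,w)\,w^{\alpha}\phi_U(w)\,dm_{2n}(w)$, cancels the nonvanishing factor $\phi_U(z)$, and then extracts the orthogonality relations by differentiating in $z$ at $z=0$ (which is exactly your ``match Taylor coefficients in the $\overline w$ variable'' step, phrased differently); completeness is then deduced, as you do, from the fact that the orthonormal set already sums to the full kernel $K_{\Omega_U}$.

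Two small points to clean up. First, formula \eqref{unitarily transformed bergman kernel written neater} is part of the lemma, not a hypothesis, so you should say one sentence: apply the biholomorphic transformation law under $U^{-1}\colon\Omega_U\to\Omega$ to \eqref{Huang and Li's formula of the Bergman kernel} and use the unitary invariance $K_{\mathbb{B}^n}(U^{-1}z,U^{-1}w)=K_{\mathbb{B}^n}(z,w)$. Second, your reproducing formula has a stray conjugate: it should read $z^{\alpha}\phi_U(z)=\int_{\Omega_U}K_{\Omega_U}(z,w)\,w^{\alpha}\phi_U(w)\,dm_{2n}(w)$, not $\overline{w^{\alpha}\phi_U(w)}$. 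With that corrected, the coefficient-matching indeed gives $c_{\beta,\lambda}^2\langle w^{\alpha}\phi_U,\,w^{\beta}\phi_U\rangle=\delta_{\alpha\beta}$ (note the nonvanishing of $\phi_U$ on $\Omega_U$, read off from the diagonal of \eqref{unitarily transformed bergman kernel written neater}, is what lets you cancel $\phi_U(z)$ before comparing monomials).
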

\begin{proof}
Consider the map $f(z)=U^{-1}(z)$ from $\Omega_{U}$ to $\Omega$. By \eqref{Huang and Li's formula of the Bergman kernel} and the transformation law of the Bergman kernel under $f$, we get
\begin{equation}
K_{\Omega_U}(z, w)= K_{\Omega}(U^{-1}(z), U^{-1}(w)) = \phi_U(z)\overline{\phi_U(w)}K_{\mathbb{B}^n}^{\lambda}(U^{-1}(z), U^{-1}(w)), \quad z, w \in \Omega_U.
\end{equation}
By the unitary invariant property of $K_{\mathbb{B}^n}$, we obtain \eqref{unitarily transformed bergman kernel written neater}. Next, note that when $\beta \neq \overrightarrow{0}$
\begin{equation}
\left.{\partial^{|\beta|} \over \partial z^{\beta}}\right|_{z = 0} K_{\mathbb{B}^n}^{\lambda}(z, w) =   \left.{\partial^{|\beta|} \over \partial z^{\beta}}\right|_{z = 0}\left[ {n! \over \pi^n}(1 - \langle z, w \rangle)^{-(n + 1)}\right]^{\lambda}= \left(n! \over \pi^n\right)^{\lambda}\left[\mu\ldots (\mu + |\beta| - 1) \right] \overline{w}^{\beta}, \label{derivative of ball kernel}
    %\\
  %  &=& \left(n! \over \pi^n\right)^{\lambda}\left[(n + 1)\lambda\right]\cdots\left[(n + 1)\lambda + |\beta| - 1 \right] (1 - \langle z, w\rangle)^{-(n + 1)\lambda - |\beta|}|_{z = 0}(-1)^{2|\beta|}\overline{w}^{\beta} \nonumber
    %\\
   % &=& \left(n! \over \pi^n\right)^{\lambda}\left[\mu\cdots (\mu + |\beta| - 1) \right] \overline{w}^{\beta}. \label{derivative of ball kernel}
\end{equation}
where we recall that $\mu = (n + 1)\lambda$. By the reproducing property of the Bergman kernel,
\begin{equation}\label{eqnzphi}
z^{\alpha}\phi_U(z) = \int_{\Omega_U} K_{\Omega_U}(z, w)w^{\alpha}\phi_U(w) dm_{2n}(w), \quad \alpha \in \mathbb{N}^{n}.
\end{equation}
%Here when $\alpha = \overrightarrow{0}, z^{\alpha}$ and $w^{\alpha}$ are understood to be $1$.
By setting $z=w$ in \eqref{unitarily transformed bergman kernel written neater}, we see that $\phi_U$ never vanishes in $\Omega_U$. We bring \eqref{unitarily transformed bergman kernel written neater} into (\ref{eqnzphi}),  then cancel $\phi_U(z)$ and put in the formula of $K_{\mathbb{B}^n}(z, w)$, to get
\begin{equation}\label{z to the alpha equals}
z^{\alpha} = \int_{\Omega_U} w^{\alpha}|\phi_U(w)|^2 \left[ {n! \over \pi^n}(1 - \langle z, w \rangle)^{-n - 1}  \right]^{\lambda} dm_{2n}(w).
\end{equation}
Setting $\alpha = \overrightarrow{0}$ and  $z = 0$ gives
$$
1 = \left({n! \over \pi^n} \right)^{\lambda}\int_{\Omega_U}|\phi_U(w)|^2  dm_{2n}(w).
$$
By differentiating \eqref{z to the alpha equals},
\begin{eqnarray*}
    \alpha!\delta_{\alpha\beta}
    =& \int_{\Omega_U}w^{\alpha}|\phi_U(w)|^2\left.{\partial^{|\beta|} \over \partial z^{\beta}}\right|_{z = 0}\left[ {n! \over \pi^n}(1 - \langle z, w \rangle)^{-n - 1}  \right]^{\lambda} dm_{2n}(w)
    \\
    =& \left(n! \over \pi^n\right)^{\lambda}[\mu\ldots (\mu + |\beta| - 1)]\int_{\Omega_U}w^{\alpha}\overline{w}^{\beta}|\phi_U(w)|^2dm_{2n}(w).
\end{eqnarray*}
Consequently, \eqref{moment problem with phi} holds, and $\{c_{\alpha, \lambda}\phi_U(w)w^{\alpha}\}$ is an orthonormal set in $A^2(\Omega_U)$.
%\begin{equation*}
%{\delta_{\alpha\beta} \over c^2_{\alpha, \lambda}} = \int_{\mathbb{C}^n}w^{\alpha}\overline{w}^{\beta}\left( |\phi_U(w)|^2\chi_{\Omega_U}(w)dm_{2n}(w) \right), \quad \alpha, \beta \in \mathbb{N}^{n}.
%\end{equation*}
To deduce their completeness, we note that
\begin{eqnarray*}
K_{\Omega_U}(z, w) &=& \phi_U(z)\overline{\phi_U(w)}K_{\mathbb{B}^n}(z, w)^{\lambda}
\\
&=& \phi_U(z)\overline{\phi_U(w)}\sum_{\alpha} {1 \over \alpha!}z^{\alpha} \left.{\partial^{|\alpha|} \over \partial z^{\alpha}}\right|_{z = 0}\left[ {n! \over \pi^n}(1 - \langle z, w \rangle)^{-(n + 1)}\right]^{\lambda}
\\
&=& \sum_{\alpha} c_{\alpha, \lambda}\phi_U(z)z^{\alpha}c_{\alpha, \lambda}\overline{\phi_U(w)}\overline{w}^{\alpha},
\end{eqnarray*}
where in the second to last step we have used a Taylor series expansion in the $z-$variable at $z=0$. %Putting in $z = w$ in the above, we see $K_{\Omega_U}(z, z)$ equals to the sum of all $|c_{\alpha, \lambda}\phi_U(z)z^{\alpha}|'$s.
%$$
%K_{\Omega_U}(z, z) = \sum_{\alpha} c_{\alpha, \lambda}\phi_U(z)z^{\alpha}c_{\alpha, \lambda}\overline{\phi_U(z)}\overline{z}^{\alpha}.
%$$
By the orthonormal series expansion of the Bergman kernel, $\{c_{\alpha, \lambda}\phi_U(z)z^{\alpha}\}$ must also be complete for the Bergman space of $\Omega_U$.
\end{proof}

\begin{remark}\label{recovering the Bergman space of the ball}
    When $\Omega = \mathbb{B}^n$, $\phi \equiv 1$, and $\lambda = 1$, Lemma \ref{orthonormal basis of Bergman space} recovers an orthonormal basis for the Bergman space of $\mathbb{B}^n$,  $\left\{\sqrt{\left(n! \over \pi^n \right){(|\alpha| + n)! \over n!\alpha!}}w^{\alpha}\right\}_{\alpha \in \mathbb{N}^n}
 $.
\end{remark}

Recall, for a unitary linear map $U,$ we write $\eta_U$ for the measure $|\phi_U|^2\chi_{\Omega_{U}}m_{2n}.$ %This means, writing $d\eta_U:=|\phi_U|^2\chi_{\Omega_{U}}dm_{2n},$ that we have $\eta_U(W)=\int_{W} d\eta_U$ for every measurable set $W$.
Also recall that
$\eta:=\eta_{Id}=|\phi|^2\chi_{\Omega}m_{2n}$.

\begin{lem}\label{uniqueness of our complex moment problem}
For every unitary map $U$,
\begin{equation}\label{two measures solve the same moment problem}
\eta_U = \eta.
\end{equation}
Additionally, for each $\epsilon > 0$, there exists $z=(z_1, \ldots, z_n) \in \hbox{supp}(\eta_U)$ such that $|Re(z_1)| > 1 - \epsilon$.
\end{lem}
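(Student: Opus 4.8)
I will prove the two assertions in turn. For $\eta_U=\eta$: identity \eqref{moment problem with phi} says precisely that $\eta_U$ solves the complex moment problem with moment data $s_{\alpha\beta}=\delta_{\alpha\beta}\,c_{\alpha,\lambda}^{-2}$, and this data does not depend on $U$; hence $\eta_U$ and $\eta=\eta_{Id}$ have the same moments. Moreover, since $\phi_U\in A^2(\Omega_U)$ and $\Omega_U=U(\Omega)\subset\mathbb{B}^n$, each $\eta_U$ is a finite positive measure supported in the compact set $\overline{\mathbb{B}^n}$, and two such measures with identical moments must coincide — this is the Weierstrass-approximation argument recalled in \S\ref{moment problem subsection}; cf.\ \cite[Proposition 12.17]{Sc17}. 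Therefore $\eta_U=\eta$ for every unitary $U$.

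For the support statement, I would first note that $\phi_U$ is zero-free on $\Omega_U$ by \eqref{unitarily transformed bergman kernel written neater}, so the density of $\eta_U$ is positive everywhere on the open set $\Omega_U$ and hence $\operatorname{supp}(\eta_U)=\overline{\Omega_U}=\overline{U(\Omega)}$. Combined with the first paragraph, this yields $\overline{U(\Omega)}=\overline{\Omega}$ for every unitary $U$, i.e.\ $\overline{\Omega}$ is invariant under the full unitary group. A connected closed subset of $\overline{\mathbb{B}^n}$ that contains $0$ and is unitarily invariant must be a closed ball $\overline{\mathbb{B}^n(0,r)}$ for some $r\in(0,1]$ (since the unitary group acts transitively on spheres, such a set is $\{z:|z|\in A\}$ for a closed $A\subset[0,1]$ with $0\in A$, and connectedness forces $A=[0,r]$; $r>0$ because $\Omega$ is nonempty and open). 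It then suffices to prove $r=1$: in that case $\operatorname{supp}(\eta_U)=\overline{\mathbb{B}^n}$ contains $(1-\tfrac{\epsilon}{2},0,\dots,0)$ for every $\epsilon>0$, and $|\operatorname{Re}(z_1)|=1-\tfrac{\epsilon}{2}>1-\epsilon$ there.

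To establish $r=1$, suppose to the contrary that $r<1$. Taking $\alpha=\beta=m e_1$ with $e_1=(1,0,\dots,0)$ in \eqref{moment problem with phi} and using \eqref{definition of calpha},
\[
\int_{\mathbb{C}^n}|z_1|^{2m}\,d\eta \;=\; c_{m e_1,\lambda}^{-2}\;=\;\Big(\tfrac{\pi^n}{n!}\Big)^{\lambda}\frac{m!}{\mu(\mu+1)\cdots(\mu+m-1)}.
\]
On one hand, $\operatorname{supp}(\eta)\subset\overline{\mathbb{B}^n(0,r)}$ forces $|z_1|\le r$ on the support, so the left-hand side is at most $r^{2m}\,\eta(\mathbb{C}^n)$, which decays exponentially in $m$. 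On the other hand,
\[
\log\frac{m!}{\mu(\mu+1)\cdots(\mu+m-1)}=\sum_{k=1}^{m}\log\frac{k}{\mu+k-1}=-\sum_{k=1}^{m}\log\!\Big(1+\frac{\mu-1}{k}\Big)=O(\log m),
\]
so $c_{m e_1,\lambda}^{-2}$ decays at most polynomially; for $m$ large this contradicts the exponential bound. Hence $r=1$, which completes the proof. (One may also bypass the unitary-invariance reduction: expanding $(\operatorname{Re}z_1)^{2m}=4^{-m}(z_1+\overline{z_1})^{2m}$ and integrating termwise against \eqref{moment problem with phi}, only the middle term survives, giving $\int_{\mathbb{C}^n}(\operatorname{Re}z_1)^{2m}\,d\eta_U=4^{-m}\binom{2m}{m}\,c_{m e_1,\lambda}^{-2}$, whose $2m$-th root tends to $1$ since $4^{-m}\binom{2m}{m}\in[(2m+1)^{-1},1]$; this is incompatible with $\operatorname{supp}(\eta_U)$ lying in $\{|\operatorname{Re}z_1|\le 1-\epsilon\}$.)

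The only step carrying genuine content is the last one; the equality $\eta_U=\eta$ is a black-box application of uniqueness for compactly supported moment problems, and the reduction to $r=1$ is routine. The crux is the elementary fact that $m!/(\mu(\mu+1)\cdots(\mu+m-1))$ is sub-exponential in $m$ — equivalently, that $\big(c_{m e_1,\lambda}^{-2}\big)^{1/(2m)}\to 1$ — which is precisely what stops $\operatorname{supp}(\eta)$ from staying a fixed positive distance from $\partial\mathbb{B}^n$.
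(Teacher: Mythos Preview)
Your proof is correct. The argument for $\eta_U=\eta$ is identical to the paper's: identical moment data plus uniqueness for compactly supported measures.

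For the support statement you take a somewhat different route than the paper. The paper argues directly by contradiction: assuming $|\operatorname{Re}z_1|\le 1-\epsilon$ on $\operatorname{supp}(\eta_U)$, it expands $(\operatorname{Re}z_1)^{2m}$ binomially, uses \eqref{moment problem with phi} to kill the off-diagonal terms, and then applies Stirling's formula to show the resulting quantity $4^{-m}\binom{2m}{m}c_{me_1,\lambda}^{-2}$ is sub-exponential in $m$, contradicting the bound $(1-\epsilon)^{2m}\eta_U(\mathbb{C}^n)$. Your main argument instead first extracts from $\eta_U=\eta$ the geometric consequence $\overline{\Omega}=\overline{U(\Omega)}$ for all unitary $U$, deduces that $\overline{\Omega}$ is a closed ball $\overline{\mathbb{B}^n(0,r)}$, and then rules out $r<1$ using the simpler moments $\int|z_1|^{2m}\,d\eta=c_{me_1,\lambda}^{-2}$ together with the elementary estimate $\sum_{k=1}^m\log(1+\tfrac{\mu-1}{k})=O(\log m)$. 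This avoids both the binomial expansion and Stirling, and yields the stronger conclusion $\operatorname{supp}(\eta)=\overline{\mathbb{B}^n}$; on the other hand it leans on the first part of the lemma in an essential way, whereas the paper's computation works for each $U$ individually. Your parenthetical alternative is exactly the paper's approach, so you have in fact recorded both arguments.
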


\begin{proof}
Since $\Omega_U \subset \mathbb{B}^n$, the measures $\eta_U$ have compact support. By the uniqueness of compactly supported solutions to the moment problem \eqref{moment problem with phi}, as described in \S\ref{moment problem subsection}, we conclude that \eqref{two measures solve the same moment problem} holds.
For the second part of the lemma, let us suppose, in order to reach a contradiction, that for some unitary transformation $U$, there exists $\epsilon > 0$ such that for all $z \in \hbox{supp}(\eta_U),$ $|Re(z_1)| \leq 1 - \epsilon$.  Then
\begin{equation}\label{Estimate we will show is impossible}
    \int_{\mathbb{C}^n}Re(z_1)^{2m} d\eta_U \leq (1 - \epsilon)^{2m}\eta_U(\mathbb{C}^n), \quad m \in \mathbb{N}.
\end{equation}
%We will show that \eqref{Estimate we will show is impossible} cannot hold.  Consequently, the lemma will be proved.
%Let $A$ denote a constant independent of $m$ which may be different at each appearance.
Notice that for $m > 1$, by (\ref{moment problem with phi}),
\begin{eqnarray}
    \int_{\mathbb{C}^n}Re(z_1)^{2m}d\eta_U = \int_{\mathbb{C}^n} \left({z_1 + \overline{z_1} \over 2}\right)^{2m} d\eta_U &=& \int_{\mathbb{C}^n} 2^{-2m}\sum_{k=0}^{2m} {2m \choose k} z_1^k\overline{z_1}^{2m - k}d\eta_U \nonumber
    \\
    &=& 2^{-2m}{2m \choose m}\int_{\mathbb{C}^n}|z_1^m|^2 d\eta_U \nonumber
    \\
    &=& \left(\frac{\pi^n}{n!}\right)^{\lambda}2^{-2m}\left({(2m)! \over m!\mu \ldots (\mu + m - 1)}\right). \label{value integral}
\end{eqnarray}
By Stirling's formula for the factorial and gamma function, there exist constants $A_1, A_2>0$ such that
$$A_1 \sqrt{k}\left(\frac{k}{e}\right)^k \leq k! \leq  A_2 \sqrt{k}\left(\frac{k}{e}\right)^k, \quad ~\text{for all integers}~~k \geq 1;$$
$$A_1 \frac{1}{\sqrt{x}}\left(\frac{x}{e}\right)^x \leq \Gamma(x) \leq  A_2 \frac{1}{\sqrt{x}} \left(\frac{x}{e}\right)^x, \quad ~\text{for all real numbers}~~x \geq 1.$$
Consequently, the quantity in (\ref{value integral}), modulo a non-zero constant factor that is independent of $m$, is greater than or equal to (denoted by $\gtrsim$)
\begin{eqnarray*}
2^{-2m}\left({\sqrt{2m}(2m)^{2m}e^{-2m} \over \sqrt{ m}(m^m)e^{-m}[\mu \cdots (\mu + m - 1)]} \right) &=& \left({m^m \over e^m[\mu\cdots (\mu + m - 1)]} \right)
\\
&=& \left({m^m \Gamma(\mu) \over e^m\Gamma(\mu + m)}\right)
\\
&\gtrsim& \left({m^m \sqrt{\mu + m}e^{\mu + m} \over e^m(\mu + m)^{\mu + m}} \right)
\\
&=& \left({m^m \over (\mu + m)^{\mu + m - {1 \over 2}}}\right)
\\
&=& \left({m \over \mu + m}\right)^m(\mu + m)^{{1 \over 2} - \mu}.
\end{eqnarray*}
By the above and (\ref{Estimate we will show is impossible}), (\ref{value integral}),  we obtain for $m > 1$
%Consider two cases: $\mu < {1 \over 2}$ and $\mu \geq {1 \over 2}$.  If $\mu < {1 \over 2}$, then
$$\left({m \over \mu + m}\right)^{m} \lesssim  (\mu + m)^{\mu- {1 \over 2}} (1 - \epsilon)^{2m}\eta_U(\mathbb{C}^n).$$
This is a contradiction, as the left hand side goes to $e^{-\mu}$, while the right hand side goes to $0$ when $m$ goes to infinity. This finishes the proof of the lemma.
\end{proof}

\subsection{Equality with a ball less a measure zero set}\label{subsection equality measure zero}

In this section, we prove that $\Omega$ is the unit ball less possibly a closed subset of measure zero. For that, two lemmas are needed.

%The uniqueness of the measures $\{|\phi_U|^2\chi_{\Omega_U}m_{2n}\}$ from the previous lemma is used immediately to establish Lemma \ref{indicator functions agree almost everywhere - unitary version} while the second part of the previous lemma is needed in Proposition \ref{Omega is a ball less a closed set of measure zero}.

\begin{lem}\label{indicator functions agree almost everywhere - unitary version}
 For any unitary transformation $U$, $\chi_{\Omega} = \chi_{\Omega_{U}}$ $m_{2n}$-almost everywhere.
\end{lem}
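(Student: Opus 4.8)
The goal is to upgrade the almost-everywhere equality of measures $\eta_U = \eta$ from Lemma \ref{uniqueness of our complex moment problem} to an almost-everywhere equality of the indicator functions $\chi_{\Omega} = \chi_{\Omega_U}$. The plan is to exploit the structure $\eta_U = |\phi_U|^2 \chi_{\Omega_U} m_{2n}$ together with the fact that $\phi_U$ is a nowhere-vanishing holomorphic function on $\Omega_U$ (this nonvanishing was established in the proof of Lemma \ref{orthonormal basis of Bergman space} by setting $z = w$ in the kernel formula). First I would observe that, since $\eta_U = \eta$ as measures and both are absolutely continuous with respect to $m_{2n}$, their Radon--Nikodym densities agree $m_{2n}$-almost everywhere, i.e. $|\phi_U|^2 \chi_{\Omega_U} = |\phi|^2 \chi_{\Omega}$ almost everywhere on $\mathbb{C}^n$. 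Hence, up to a null set, the density is strictly positive exactly on $\Omega \cap \Omega_U$ and vanishes off it; in particular $\chi_{\Omega} = \chi_{\Omega_U}$ almost everywhere on the open set $\Omega \cup \Omega_U$ would follow once we rule out that $\Omega \setminus \Omega_U$ or $\Omega_U \setminus \Omega$ carries positive Lebesgue measure.

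The heart of the matter is therefore to show $m_{2n}(\Omega \setminus \Omega_U) = 0$ and $m_{2n}(\Omega_U \setminus \Omega) = 0$. Suppose $m_{2n}(\Omega \setminus \Omega_U) > 0$. On $\Omega \setminus \Omega_U$ the density $|\phi|^2 \chi_\Omega$ equals $|\phi|^2 > 0$ (as $\phi$ is nowhere zero on $\Omega$), but the density $|\phi_U|^2 \chi_{\Omega_U}$ equals $0$ there; this contradicts the $m_{2n}$-a.e.\ equality of the two densities, since $|\phi|^2 > 0$ everywhere on the positive-measure set $\Omega \setminus \Omega_U$. The symmetric argument handles $\Omega_U \setminus \Omega$. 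So in fact both symmetric differences are $m_{2n}$-null, and consequently $\chi_\Omega = \chi_{\Omega_U}$ $m_{2n}$-almost everywhere. The one subtlety to be careful about is that $\phi$ is only defined (and nonvanishing) on $\Omega$ itself — outside $\Omega$ we set $\phi_U \equiv 0$ by convention — so the comparison of densities must be carried out pointwise on the respective open sets where each is genuinely positive, which is exactly what the above dichotomy does.

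The step I expect to be the only real (if modest) obstacle is making the Radon--Nikodym / density-comparison argument airtight: one must be sure that the equality $\eta_U = \eta$ of Borel measures, combined with $\eta_U, \eta \ll m_{2n}$, genuinely forces the densities to agree $m_{2n}$-a.e.\ (standard, but worth stating), and then that the set where one density is positive and the other vanishes is null. Everything else — the nonvanishing of $\phi_U$ on $\Omega_U$, the absolute continuity — is already in hand from earlier in the section. I would write it as: let $h = |\phi|^2\chi_\Omega$ and $h_U = |\phi_U|^2\chi_{\Omega_U}$ be the densities; since $\int_B h\, dm_{2n} = \eta(B) = \eta_U(B) = \int_B h_U\, dm_{2n}$ for every Borel set $B$, we get $h = h_U$ a.e.; then $\{h > 0\} = \Omega$ and $\{h_U > 0\} = \Omega_U$ up to null sets because $\phi$, $\phi_U$ are nowhere zero on $\Omega$, $\Omega_U$ respectively; hence $\Omega$ and $\Omega_U$ agree up to a null set, i.e.\ $\chi_\Omega = \chi_{\Omega_U}$ $m_{2n}$-a.e.
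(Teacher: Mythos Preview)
Your argument is correct and follows essentially the same approach as the paper: both show that if the symmetric difference $\Omega \triangle \Omega_U$ had positive measure, then on that set one of the densities $|\phi|^2\chi_\Omega$, $|\phi_U|^2\chi_{\Omega_U}$ would be strictly positive (by the nonvanishing of $\phi$, $\phi_U$) while the other vanishes, contradicting $\eta = \eta_U$. The only cosmetic difference is that you invoke uniqueness of Radon--Nikodym densities to get the pointwise a.e.\ comparison directly, whereas the paper exhausts $\Omega_U$ by compacta to secure a positive lower bound for $|\phi_U|^2$ before comparing the two measures on a specific Borel set; your packaging is slightly cleaner but the content is the same.
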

\begin{proof}
%As already observed in \S \ref{subsection moment problem theorems}, $\phi_U$ is nowhere zero in $\Omega_U$.  Thus, by the definition of $\phi_U$ (extended by $0$ outside $\Omega_U$),
   %\begin{equation}\label{phi and the indicator function are zero at the same time}
   %\phi_U(z) = \phi \circ U^{-1}(z) = 0 \Leftrightarrow z \not \in \Omega_U  \Leftrightarrow \chi_{\Omega_U}(z) = 0.
    %\end{equation}
%Let $\eta$ and $\eta_U$ denote measures as above. Let $B = \mathbb{B}^n(z, \epsilon) \subset \hbox{Ext}(\Omega):=\bC^n\setminus\overline{\Omega}$, so that $\eta(B) =  0$. For any unitary transformation $U$,  since $\eta_U = \eta$ by Lemma \ref{uniqueness of our complex moment problem}, we have
    %$$
    %0 = \eta_U(B) = \int_B |\phi_U|^2\chi_{\Omega_{U}}dm_{2n}.
    %$$
%By \eqref{phi and the indicator function are zero at the same time} and since $\Omega_{U}$ is open, $\chi_{\Omega_U}|_B \equiv 0$ (for otherwise, there is an open subset of $B$, on which $\chi_{\Omega_{U}} \equiv 1$ and $\phi_U \neq 0$). Hence $B \subset \hbox{Ext}(\Omega_{U})$, and as a consequence, $\hbox{Ext}(\Omega) \subset \hbox{Ext}(\Omega_{U})$.  Applying $U^{-1}$ to both sets, we get $U^{-1}(\hbox{Ext}(\Omega)) \subset U^{-1}(\hbox{Ext}(\Omega_{U})).$ This yields $\hbox{Ext}(\Omega_{U^{-1}}) \subset \hbox{Ext}(\Omega).$ Since $U$ is arbitrary,
%we conclude that $\hbox{Ext}(\Omega) = \hbox{Ext}(\Omega_{U}).$

Let $E_{U} = \{z \in \mathbb{C}^n: \chi_{\Omega} \neq \chi_{\Omega_{U}}\}$. It suffices to prove $E_U$ is of Lebesgue measure zero. %Since $\hbox{Ext}(\Omega) = \hbox{Ext}(\Omega_{U})$, $E_{U} \subset \overline{\Omega}$. 
Suppose, in order to reach a contradiction,  that $m_{2n}(E_{U}) > 0$.  Then, either $E_{U} \cap \{\chi_{\Omega} = 0\}$ or $E_{U} \cap \{\chi_{\Omega} = 1\}$ has positive measure.  We only consider the former case, as the latter case is similar. Notice that
$
E_{U} \cap \{\chi_{\Omega} = 0\} = \{\chi_{\Omega} = 0, \chi_{\Omega_U} = 1\} \subset \Omega_U.
$
    Let $\{F_k\}$ be an exhaustion of $\Omega_U$ by precompact open subsets: $\cup F_k = \Omega_U$, $F_k \ssubset \Omega_U$ and $\overline{F_k} \subset F_{k + 1}$.  Then for sufficiently large $k$ ,
    $$
    m_{2n}(F_k \cap \{\chi_{\Omega} = 0, \chi_{\Omega_U} = 1\}) > 0.
    $$
As already observed in \S \ref{subsection moment problem theorems}, $\phi_U$ is nowhere zero and continuous in $\Omega_U$.
Hence,  $|\phi_U|^2$  has a positive lower bound on $F_k$. Consequently,
    $$
    \eta_U(F_k \cap \{\chi_{\Omega} = 0, \chi_{\Omega_U} = 1\}) = \int_{F_k \cap \{\chi_{\Omega} = 0, \chi_{\Omega_U} = 1\}} |\phi_U|^2 \chi_{\Omega_{U}}dm_{2n} > 0,
    $$
    while
    $$
    \eta(F_k \cap \{\chi_{\Omega} = 0, \chi_{\Omega_U} = 1\})= \int_{F_k \cap \{\chi_{\Omega} = 0, \chi_{\Omega_U} = 1\}} |\phi|^2 \chi_{\Omega}dm_{2n} = 0.
    $$
    This violates \eqref{two measures solve the same moment problem}.  Thus, $\chi_{\Omega} = \chi_{\Omega_{U}}$ $m_{2n}$-almost everywhere.
\end{proof}

\begin{lem}\label{After a unitary transformation this other set also has positive measure}
Let $z_0 \in \mathbb{B}^n$ and $\mathbb{B}^n(z_0, r) \subset \mathbb{B}^n$. If $m_{2n}((\mathbb{B}^n\setminus \Omega) \cap \mathbb{B}^n(z_0, r)) > 0$, then
$$
m_{2n}((\mathbb{B}^n\setminus \Omega) \cap \mathbb{B}^n(U(z_0), r)) > 0,
$$
for every unitary linear transformation $U$.
\end{lem}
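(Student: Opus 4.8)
The plan is to combine Lemma~\ref{indicator functions agree almost everywhere - unitary version} with the elementary fact that a unitary linear map of $\mathbb{C}^n$ preserves both the family of Euclidean balls and $2n$-dimensional Lebesgue measure. The only subtlety is bookkeeping: the conclusion is phrased for $\Omega$, whereas the unitary-invariance computation naturally takes place on $\Omega_U = U(\Omega)$, and Lemma~\ref{indicator functions agree almost everywhere - unitary version} is precisely what lets us move between the two.

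First I would transfer the hypothesis to $\Omega_U$. Since $U$ is a $\mathbb{C}$-linear isometry fixing $0$, it maps $\mathbb{B}^n$ onto itself and maps $\mathbb{B}^n(z_0, r)$ onto $\mathbb{B}^n(U(z_0), r)$; moreover $\mathbb{B}^n \setminus \Omega_U = \mathbb{B}^n \setminus U(\Omega) = U(\mathbb{B}^n \setminus \Omega)$. Hence
$$(\mathbb{B}^n \setminus \Omega_U) \cap \mathbb{B}^n(U(z_0), r) = U\big((\mathbb{B}^n \setminus \Omega) \cap \mathbb{B}^n(z_0, r)\big).$$
Viewed as a real-linear map of $\mathbb{R}^{2n}$, $U$ has Jacobian determinant of modulus one, so it preserves $m_{2n}$, and therefore
$$m_{2n}\big((\mathbb{B}^n \setminus \Omega_U) \cap \mathbb{B}^n(U(z_0), r)\big) = m_{2n}\big((\mathbb{B}^n \setminus \Omega) \cap \mathbb{B}^n(z_0, r)\big) > 0.$$

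Next I would pass back from $\Omega_U$ to $\Omega$. By Lemma~\ref{indicator functions agree almost everywhere - unitary version}, $\chi_\Omega = \chi_{\Omega_U}$ $m_{2n}$-almost everywhere, so the set on which $\mathbb{B}^n \setminus \Omega$ and $\mathbb{B}^n \setminus \Omega_U$ differ has measure zero; intersecting with the fixed set $\mathbb{B}^n(U(z_0), r)$ does not change measures, which gives
$$m_{2n}\big((\mathbb{B}^n \setminus \Omega) \cap \mathbb{B}^n(U(z_0), r)\big) = m_{2n}\big((\mathbb{B}^n \setminus \Omega_U) \cap \mathbb{B}^n(U(z_0), r)\big) > 0,$$
which is exactly the assertion. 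I do not expect a genuine obstacle here: the lemma is essentially a repackaging of unitary invariance of Lebesgue measure together with the already-established almost-everywhere equality $\chi_\Omega = \chi_{\Omega_U}$, so the only care needed is in the final measure-zero bookkeeping step.
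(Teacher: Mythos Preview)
Your proof is correct and uses the same two ingredients as the paper: unitary invariance of Lebesgue measure (and of Euclidean balls) together with Lemma~\ref{indicator functions agree almost everywhere - unitary version}. The only cosmetic difference is that the paper argues by contrapositive and tracks $m_{2n}(\Omega\cap\mathbb{B}^n(\cdot,r))$ through an integral computation, whereas you argue directly and track $m_{2n}((\mathbb{B}^n\setminus\Omega)\cap\mathbb{B}^n(\cdot,r))$ via set identities; the content is the same.
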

\begin{proof}
%Substituting $w = U^{-1}(\zeta)$, we obtain
%\begin{equation}\label{How U operates from argument to subscript}
%\chi_{\Omega}(w) = \chi_{\Omega_U}(U(w)).
%\end{equation}
We shall prove the contrapositive. Thus, suppose that $m_{2n}((\mathbb{B}^n\setminus\Omega) \cap \mathbb{B}^n(U(z_0), r)) = 0$ for some $U$.
Note that, by the invariance of the Euclidian metric and $m_{2n}$  under unitary transformations, we have
$\mathbb{B}^n(U(z_0), r)=U(\mathbb{B}^n(z_0, r))$ and
$m_{2n}(\mathbb{B}^n(z_0, r))=m_{2n}(\mathbb{B}^n(U(z_0), r))$. Since
$$
\mathbb{B}^n(U(z_0), r) = ((\mathbb{B}^n\setminus \Omega) \cap \mathbb{B}^n(U(z_0), r)) \cup (\Omega \cap \mathbb{B}^n(U(z_0), r)),
$$
we therefore obtain that
\begin{eqnarray*}
         m_{2n}(\mathbb{B}^n(z_0, r))=m_{2n}(\Omega \cap \mathbb{B}^n(U(z_0), r)) &=& \int_{\mathbb{B}^n(U(z_0), r)} \chi_{\Omega} dm_{2n}
        \\
        &=& \int_{\mathbb{B}^n(U(z_0), r)} \chi_{\Omega_U}dm_{2n}
        \\
        &=& \int_{\mathbb{B}^n(z_0, r)}\chi_{\Omega_U}(U(w)) dm_{2n}(w)
        \\
        &=& \int_{\mathbb{B}^n(z_0, r)}\chi_{\Omega}(w) dm_{2n}(w)=m_{2n}(\Omega \cap \mathbb{B}^n(z_0, r)),
    \end{eqnarray*}
where the third equality used Lemma \ref{indicator functions agree almost everywhere - unitary version} and the fifth equality used the fact that $\chi_{\Omega}(w) = \chi_{\Omega_U}(U(w))$ for $w \in \mathbb{C}^n.$ Consequently,
$m_{2n}((\mathbb{B}^n \setminus \Omega) \cap \mathbb{B}^n(z_0, r)) = 0.$
\end{proof}

\begin{pro}\label{Omega is a ball less a closed set of measure zero}
The domain $\Omega$ is the unit ball  $\mathbb{B}^n$ less possibly a relatively closed set of measure zero.
\end{pro}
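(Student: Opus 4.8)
The plan is to combine the measure-theoretic information already extracted—namely that $\Omega \subset \mathbb{B}^n$, that $\chi_\Omega = \chi_{\Omega_U}$ almost everywhere for every unitary $U$ (Lemma \ref{indicator functions agree almost everywhere - unitary version}), and the ``rotational propagation'' of fat complements (Lemma \ref{After a unitary transformation this other set also has positive measure})—with the boundary-reaching conclusion of Lemma \ref{uniqueness of our complex moment problem} (the existence, for each $\epsilon>0$, of a support point with $|\mathrm{Re}(z_1)| > 1-\epsilon$). First I would record the easy half: since $\Omega$ is open and $\Omega \subset \mathbb{B}^n$, the complement $E := \mathbb{B}^n \setminus \Omega$ is relatively closed in $\mathbb{B}^n$; the real content is that $m_{2n}(E) = 0$, equivalently that $\Omega$ has full measure in $\mathbb{B}^n$.

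Suppose for contradiction that $m_{2n}(E) > 0$. Then there is a point $z_0 \in \mathbb{B}^n$ and a radius $r>0$ with $\mathbb{B}^n(z_0,r) \subset \mathbb{B}^n$ and $m_{2n}(E \cap \mathbb{B}^n(z_0,r)) > 0$ (a positive-measure set has points of density, or one simply exhausts $\mathbb{B}^n$ by balls). By Lemma \ref{After a unitary transformation this other set also has positive measure}, $m_{2n}(E \cap \mathbb{B}^n(U(z_0),r)) > 0$ for \emph{every} unitary $U$. Now the unitary group acts transitively on spheres of fixed radius about the origin, so $\{U(z_0) : U \text{ unitary}\}$ is the whole sphere $\{\,|z| = |z_0|\,\}$; letting $U$ vary we conclude that $E$ has positive measure in a neighborhood of \emph{every} point of that sphere, and in particular the ``fat'' part of $E$ is at distance at least (roughly) $1 - |z_0| - r$ from $\partial\mathbb{B}^n$. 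Pushing this a little: by a compactness/covering argument over the sphere $\{|z|=|z_0|\}$, one gets a fixed $\rho = \rho(|z_0|, r) < 1$ and a uniform lower bound $m_{2n}\big(E \cap \mathbb{B}^n(w, r/2)\big) \geq \delta > 0$ for all $w$ with $|w| = |z_0|$.

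From here I would feed this back into the measure $\eta = |\phi|^2 \chi_\Omega m_{2n}$. Since $\eta = \eta_U$ for all $U$ and the support of $\eta$ is contained in $\overline{\Omega} \subset \overline{\mathbb{B}^n}$, the content of Lemma \ref{uniqueness of our complex moment problem} is that $\mathrm{supp}(\eta)$ meets every neighborhood of the boundary sphere—in fact points with $|\mathrm{Re}(z_1)|$ arbitrarily close to $1$, hence with $|z|$ arbitrarily close to $1$, lie in $\mathrm{supp}(\eta) \subset \overline\Omega$. The tension I want to exploit: on the one hand $E$ is fat near every sphere of radius $|z_0|$ by the unitary-propagation argument, so if some such sphere had radius close to $1$ then $\Omega$ would be ``thin'' near that whole sphere, contradicting that $\mathrm{supp}(\eta) = \overline{\mathrm{supp}(\chi_\Omega |\phi|^2)}$ reaches arbitrarily close to $|z|=1$ in a full-measure way. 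Concretely: the argument above applies to \emph{every} value $s = |z_0|$ for which $E$ has positive measure somewhere on $\{|z| = s\}$, so the set of such radii $s$ is, via Lemma \ref{indicator functions agree almost everywhere - unitary version} and Fubini in the radial variable, either null or co-null in $(0,1)$; combined with $m_{2n}(E) > 0$ it must be co-null, forcing $m_{2n}(\Omega \cap \{s < |z| < 1\}) = 0$ for $s$ near $1$—but then $\mathrm{supp}(\eta)$ cannot reach $|\mathrm{Re}(z_1)| > 1 - \epsilon$ for small $\epsilon$, contradicting Lemma \ref{uniqueness of our complex moment problem}. Hence $m_{2n}(E) = 0$, and since $\Omega$ is open with full measure in $\mathbb{B}^n$, connectedness of $\mathbb{B}^n$ gives that $E = \mathbb{B}^n \setminus \Omega$ is a relatively closed set of measure zero and $\Omega = \mathbb{B}^n \setminus E$ as claimed.

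The step I expect to be the main obstacle is making the ``unitary propagation plus transitivity on spheres'' argument genuinely uniform: Lemma \ref{After a unitary transformation this other set also has positive measure} gives positivity of measure for each individual $U$, but to conclude that $\Omega$ is thin near an \emph{entire} sphere one needs a uniform lower bound, which requires either a density-point refinement or a covering argument that trades a fixed-radius ball for slightly smaller balls centered along the sphere; care is needed that the radii stay admissible (the balls $\mathbb{B}^n(U(z_0),r)$ must remain inside $\mathbb{B}^n$, which is automatic by unitary invariance, but the shrunk balls in the covering step need checking). The rest—relating $\mathrm{supp}(\eta)$ to $\overline{\Omega}$ using that $|\phi|$ is bounded below on compacta of $\Omega$, and the Fubini-in-the-radius dichotomy—is routine once that uniformity is in hand.
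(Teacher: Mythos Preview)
Your setup is right—assume $m_{2n}(E)>0$, find a point where $E$ is fat, and propagate by unitaries via Lemma \ref{After a unitary transformation this other set also has positive measure}—but you then head off into a measure-theoretic argument (uniform lower bounds, Fubini in the radial variable, a null/co-null dichotomy for radii) that is both unnecessary and, as written, not correct. The claim that ``the set of such radii $s$ is either null or co-null in $(0,1)$'' does not follow from Lemma \ref{indicator functions agree almost everywhere - unitary version} plus Fubini: what that combination yields is that for a.e.\ fixed $s$ the slice $\Omega\cap S_s$ is unitary-invariant up to null sets, hence (by ergodicity/transitivity and openness) has surface measure either $0$ or full; it says nothing about the \emph{set of radii} being null or co-null, and certainly does not force $m_{2n}(\Omega\cap\{s<|z|<1\})=0$ for $s$ near $1$. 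So the contradiction you describe does not close.

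The step you are missing is purely topological and makes the uniformity worry disappear entirely. Choose $z_0$ to be a \emph{Lebesgue density point} of $E$ (such points exist since $m_{2n}(E)>0$); then $m_{2n}(E\cap \mathbb{B}^n(z_0,r))>0$ for \emph{all} sufficiently small $r$, not just one. Lemma \ref{After a unitary transformation this other set also has positive measure} then gives $m_{2n}(E\cap \mathbb{B}^n(U(z_0),r))>0$ for every unitary $U$ and every small $r$. Now use that $\Omega$ is open: if $U(z_0)\in\Omega$, some small ball about $U(z_0)$ would lie entirely in $\Omega$, contradicting the positive measure of $E$ in that ball. Hence $U(z_0)\notin\Omega$ for every unitary $U$, i.e.\ the whole sphere $S_{|z_0|}=\{|z|=|z_0|\}$ is disjoint from $\Omega$. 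Since $0\in\Omega$ and, by the second part of Lemma \ref{uniqueness of our complex moment problem}, $\Omega$ contains points of modulus arbitrarily close to $1$, the sphere $S_{|z_0|}$ separates $\Omega$ into two nonempty pieces—contradicting connectedness. No covering, no uniform bounds, no radial Fubini needed.
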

\begin{proof}
    In order to reach a contradiction, we suppose that $m_{2n}(\mathbb{B}^n \setminus \Omega) > 0$. By the Lebesgue differentiation theorem applied to the characteristic function $\chi_{\bB^n\setminus\Omega}$ (cf. \cite[page 141, 7.12]{Ru87}), the density $D_{\mathbb{B}^n \setminus \Omega}(z)$ of $\mathbb{B}^n\setminus\Omega$ at $z$ satisfies that
    $$
    D_{\mathbb{B}^n\setminus \Omega}(z) := \lim_{r \to 0} {m_{2n}((\mathbb{B}^n\setminus\Omega) \cap \mathbb{B}^n(z, r)) \over m_{2n}(\mathbb{B}^n(z, r))} = 1,
    $$
for $m_{2n}$-almost all $z \in \mathbb{B}^n \setminus \Omega$. In particular,
$
\left\{z \in \mathbb{B}^n \setminus \Omega :\,  D_{\mathbb{B}^n\setminus \Omega}(z) = 1\right\} \neq \emptyset;
$
let $z_0$ be a member of this set. Then, for all $r > 0$ sufficiently small,
    $
    m_{2n}((\mathbb{B}^n\setminus\Omega) \cap \mathbb{B}^n(z_0, r)) > 0.
    $
Since $\Omega$ is open, $z_0$ cannot be in $\Omega$. Now, let $U$ be a unitary transformation.  By Lemma \ref{After a unitary transformation this other set also has positive measure}, for all $r > 0$ sufficiently small,
    $$
    m_{2n}((\mathbb{B}^n\setminus\Omega) \cap \mathbb{B}^n(U(z_0), r)) > 0.
    $$
Again, since $\Omega$ is open,  it follows that $U(z_0) \not\in \Omega$.  As $U$ was an arbitrary unitary transformation, setting $S_{|z_0|}:=\{z : |z| = |z_0|\},$ we conclude that
    $$
    S_{|z_0|} \cap \Omega = \emptyset.
    $$
    Since $z_0 \in \mathbb{B}^n$ and $\Omega$ contains 0, we have $|z_0|>0$. Furthermore, since by Lemma \ref{uniqueness of our complex moment problem}, $\Omega$ has points with modulus arbitrarily close to 1, the sphere $S_{|z_0|}$ disconnects $\Omega$.  This is impossible, as $\Omega$ is connected.  Therefore, we must have $m_{2n}(\mathbb{B}^n\setminus\Omega) = 0$.
\end{proof}

\subsection{Finishing the proof}\label{subsection finishing proof}

%Now that it was established in Proposition \ref{Omega is a ball less a closed set of measure zero} that $\Omega$ is the unit ball less a closed set of measure zero, we can apply Theorem \ref{The portion of Huang and Li's Theorem we need} \eqref{second part of Huang and Li} to help prove the rest of Theorem \ref{main theorem - negative constant case}.  For completeness, we give the entire proof of Theorem \ref{main theorem - negative constant case} below.

To finish the proof of Theorem \ref{general version main theorem} (which implies Theorem \ref{main theorem - negative constant case}), we first establish

\begin{pro}\label{value phi lambda}
	The function $\phi \equiv 1$ and $\lambda=1$. Consequently,
	$K_{\Omega}= K_{\mathbb{B}^n}$ on  $\Omega$. Moreover,  every $f \in A^2(\Omega)$ extends holomorphically to a function in  $A^2(\mathbb{B}^n)$.
\end{pro}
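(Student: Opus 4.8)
The plan is to recover $\phi$ and $\lambda$ from the moment identity \eqref{moment problem with phi} together with the fact, just established in Proposition \ref{Omega is a ball less a closed set of measure zero}, that $\chi_\Omega = \chi_{\mathbb{B}^n}$ $m_{2n}$-almost everywhere. The latter turns \eqref{moment problem with phi} (with $U=\mathrm{Id}$) into
$$\int_{\mathbb{B}^n} w^\alpha\overline{w}^\beta\,|\phi(w)|^2\,dm_{2n}(w) = \frac{\delta_{\alpha\beta}}{c_{\alpha,\lambda}^2} = \left(\tfrac{\pi^n}{n!}\right)^{\lambda}\frac{\delta_{\alpha\beta}\,\alpha!}{(\mu)_{|\alpha|}},\qquad (\mu)_k:=\mu(\mu+1)\cdots(\mu+k-1).$$
Thus the finite, compactly supported measure $\eta=|\phi|^2\chi_{\mathbb{B}^n}m_{2n}$ has explicitly known moments and, by the uniqueness of compactly supported solutions of the moment problem (\S\ref{moment problem subsection}), is the unique such measure. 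The whole point will be to identify $\eta$ with an explicit weighted Lebesgue measure on the ball, and then to see that holomorphicity of $\phi$ collapses the weight to a constant.

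First I would prove the a priori bound $\mu>n$ (equivalently $\lambda>n/(n+1)$). Since $\Omega\subseteq\mathbb{B}^n$ and $m_{2n}(\mathbb{B}^n\setminus\Omega)=0$, the restriction map $g\mapsto g|_\Omega$ is a norm-preserving embedding of $A^2(\mathbb{B}^n)$ into $A^2(\Omega)$; hence, by the extremal characterization of the Bergman kernel, $K_\Omega(z,z)\ge K_{\mathbb{B}^n}(z,z)$ for $z\in\Omega$. Inserting the Huang–Li formula \eqref{Huang and Li's formula of the Bergman kernel} on the diagonal yields $|\phi(z)|^2\ge (n!/\pi^n)^{1-\lambda}(1-|z|^2)^{\mu-(n+1)}$ on $\Omega$. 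Integrating and using $\phi\in A^2(\Omega)$ and $m_{2n}(\mathbb{B}^n\setminus\Omega)=0$ gives $\int_{\mathbb{B}^n}(1-|z|^2)^{\mu-(n+1)}\,dm_{2n}<\infty$, which forces $\mu-(n+1)>-1$, i.e. $\mu>n$.

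Now set $s:=\mu-(n+1)>-1$, so that $\nu_s:=(1-|w|^2)^s\chi_{\mathbb{B}^n}m_{2n}$ is a finite, compactly supported measure. A standard computation with the weighted Bergman space $A^2(\mathbb{B}^n,\nu_s)$, whose reproducing kernel is a constant times $\bigl(1-\langle z,w\rangle\bigr)^{-(n+1+s)}=\bigl(1-\langle z,w\rangle\bigr)^{-\mu}$ (done exactly as in the proof of Lemma \ref{orthonormal basis of Bergman space} / Remark \ref{recovering the Bergman space of the ball}), gives $\int_{\mathbb{B}^n}w^\alpha\overline{w}^\beta\,d\nu_s=\frac{\pi^n\Gamma(\mu-n)}{\Gamma(\mu)}\cdot\frac{\delta_{\alpha\beta}\alpha!}{(\mu)_{|\alpha|}}$. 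Therefore $\eta$ and $a\nu_s$, with $a:=(\pi^n/n!)^{\lambda}\Gamma(\mu)/\bigl(\pi^n\Gamma(\mu-n)\bigr)>0$, have identical moments; being compactly supported they coincide, so $|\phi(w)|^2=a(1-|w|^2)^s$ for a.e.\ $w\in\mathbb{B}^n$, hence — both sides being continuous on $\Omega$ — for every $w\in\Omega$. Since $\phi$ is holomorphic and zero-free on $\Omega$, $\log|\phi|^2$ is pluriharmonic there, so $0=\partial\overline{\partial}\log|\phi|^2=s\,\partial\overline{\partial}\log(1-|w|^2)$ on $\Omega$; as $\partial\overline{\partial}\log(1-|w|^2)\neq 0$ we get $s=0$, i.e.\ $\mu=n+1$ and $\lambda=1$, and $|\phi|^2\equiv a$ is constant, hence $\phi$ is a constant; the normalization $\phi(0)>0$ together with $\int_{\mathbb{B}^n}|\phi|^2\,dm_{2n}=1/c^2_{0,\lambda}=(\pi^n/n!)^{\lambda}=\pi^n/n!$ forces $a=1$, $\phi\equiv 1$. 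Then \eqref{Huang and Li's formula of the Bergman kernel} reads $K_\Omega=K_{\mathbb{B}^n}$ on $\Omega$. For the extension statement: the span of $\{K_\Omega(\cdot,w):w\in\Omega\}=\{K_{\mathbb{B}^n}(\cdot,w)|_\Omega:w\in\Omega\}$ is dense in $A^2(\Omega)$ and lies inside the image of the isometric embedding $A^2(\mathbb{B}^n)\hookrightarrow A^2(\Omega)$, which is closed (the embedding being isometric with complete domain); hence the embedding is onto, so every $f\in A^2(\Omega)$ equals $g|_\Omega$ for some $g\in A^2(\mathbb{B}^n)$ with $\|g\|_{A^2(\mathbb{B}^n)}=\|f\|_{A^2(\Omega)}$, which is the desired $L^2$-holomorphic extension.

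The main obstacle is the two-sidedness of the moment argument: the clean identification of $\eta$ with a weighted Lebesgue measure $\propto(1-|w|^2)^s m_{2n}$ only makes sense once $s>-1$, and the moment data alone do not directly exclude the range $0<\lambda\le n/(n+1)$. Securing the a priori bound $\mu>n$ — via the Bergman-kernel monotonicity $K_\Omega\ge K_{\mathbb{B}^n}$ combined with integrability of $|\phi|^2$ — is therefore the linchpin of the proof; the remaining steps are then bookkeeping with the Beta/Gamma functions and a one-line pluriharmonicity argument.
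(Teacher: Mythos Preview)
Your proof is correct, but it follows a genuinely different route from the paper's. The paper exploits the unitary machinery of Lemmas \ref{uniqueness of our complex moment problem}--\ref{indicator functions agree almost everywhere - unitary version} once more: from $\eta_U=\eta$ and $\chi_{\Omega_U}=\chi_\Omega$ a.e.\ it deduces $|\phi_U|=|\phi|$ on $\Omega$, hence $\phi_U=\phi$ for every unitary $U$ (by the open mapping theorem and the normalization at $0$), so $\phi$ is constant on small spheres and therefore constant $\equiv b$; then, since $\Omega$ and $\mathbb{B}^n$ differ by a null set, the orthonormal bases $\{bc_{\alpha,\lambda}w^\alpha\}$ and $\{\psi_\alpha\}$ must have matching normalizations, and comparing the cases $\alpha=0$ and $|\alpha|=1$ forces $\mu=n+1$, $b=1$ directly. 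By contrast, you bypass the unitary argument entirely at this stage and instead identify $\eta$ with a weighted Lebesgue measure $a(1-|w|^2)^s\,m_{2n}$ via moment uniqueness, then kill $s$ by pluriharmonicity of $\log|\phi|^2$. The price you pay is the need for the a priori bound $\mu>n$ (to make $\nu_s$ finite), which you secure cleanly through the Bergman-kernel monotonicity $K_\Omega\ge K_{\mathbb{B}^n}$ and integrability of $\phi$; the paper never needs this step because constancy of $\phi$ comes for free from symmetry. Your extension argument via density of kernel spans and closedness of the isometric image is also sound and essentially equivalent to the paper's shared-orthonormal-basis argument. Overall: the paper's proof is shorter and leans on work already done, while yours is more self-contained at this point (using only Proposition \ref{Omega is a ball less a closed set of measure zero} as input) and brings in the weighted Bergman space viewpoint explicitly.
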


\begin{proof}
	By \eqref{two measures solve the same moment problem} and Lemma \ref{indicator functions agree almost everywhere - unitary version}, $|\phi_U|^2\chi_{\Omega}m_{2n} = |\phi|^2\chi_{\Omega}m_{2n}$ for every unitary map $U$. This implies $|\phi_U|^2= |\phi|^2$ in $\Omega$ $m_{2n}-$almost everywhere. By continuity, $|\phi_U|^2= |\phi|^2$ holds everywhere in $\Omega \cap \Omega_U$. Then $\phi/(\phi_U)$ is a constant function in $\Omega \cap \Omega_U$ by the open mapping theorem. By inspecting the value at $0$ in $\Omega \cap \Omega_U$, we conclude that the constant is one; i.e., $\phi =\phi_ U$ in $\Omega \cap \Omega_U$ for every unitary map $U$. This means that $\phi$ is constant on every small sphere near $0 \in \Omega.$ Hence, $\phi \equiv b$ in $\Omega$ for some constant $b>0$ (recall that $\phi(0)>0$).	
	
By Lemma \ref{orthonormal basis of Bergman space} and Remark \ref{recovering the Bergman space of the ball}, $\{bc_{\alpha, \lambda}w^{\alpha}\}_{\alpha \in \mathbb{N}^n}$ and $\left\{\psi_{\alpha}=\sqrt{\left(n! \over \pi^n \right){(|\alpha| + n)! \over n!\alpha!}}w^{\alpha}\right\}_{\alpha \in \mathbb{N}^n}$  give  orthonormal bases of $A^2(\Omega)$ and $A^2(\mathbb{B}^n)$, respectively. But $\Omega$ and $\mathbb{B}^n$ just differ by a measure zero set and, thus, the functions $w^{\alpha}$ have the same $L^2-$norm on the two domains. Consequently, for all $\alpha \in \mathbb{N}^n,$
$$bc_{\alpha, \lambda}=\sqrt{\left(n! \over \pi^n \right){(|\alpha| + n)! \over n!\alpha!}}.$$
	By inspecting the two equations obtained by setting $\alpha = \overrightarrow{0}$ and $|\alpha|=1$, one easily sees that $\mu=n+1$ and $b=1$; i.e., $\lambda=1$ and $\phi \equiv 1.$ Then, by (\ref{Huang and Li's formula of the Bergman kernel}), $K_{\Omega}= K_{\mathbb{B}^n}$ on  $\Omega$.
	Furthermore, by the above discussion, $A^2(\Omega_U)$ and $A^2(\mathbb{B}^n)$ have the same  orthonormal basis $\{\psi_{\alpha}\}_{\alpha \in \mathbb{N}^n}.$
	If $f \in A^2(\Omega)$, then $f(w) = \sum_{\alpha \in \mathbb{N}^n} a_{\alpha}\psi_{\alpha}(w)$ for a sequence $\{a_{\alpha}\}_{\alpha \in \mathbb{N}^n}$ with $\sum_{\alpha \in \mathbb{N}^n} |a_{\alpha}|^2 < \infty$.  Since $\sum_{\alpha \in \mathbb{N}^n} a_{\alpha}\psi_{\alpha}(w)$ also defines a function in the Bergman space of $\mathbb{B}^n$, $f$ extends holomorphically to a function in $A^2(\mathbb{B}^n)$.
\end{proof}

We are now ready to prove Theorem \ref{general version main theorem}.

\begin{proof}[{\bf Proof of Theorem \ref{general version main theorem}.} ]

It is trivial that (2) implies (1) as it follows immediately from the biholomorphic invariant property of the Bergman metric. We only need to prove that
%\eqref{case 1} implies \eqref{case 2}.
(1) implies (2).

Now suppose that
%\eqref{case 1}
(1) holds; that is, the Bergman metric has its holomorphic sectional curvature equal to a negative constant. By Theorem \ref{The portion of Huang and Li's Theorem we need}, $M$ is biholomorphic to a domain $\Omega \subset \mathbb{B}^n$, whose Bergman kernel is given by $K_{\Omega}(z, w) = \phi(z)\overline{\phi(w)}K_{\mathbb{B}^n}^{\lambda}(z, w)$.  Then by Proposition \ref{Omega is a ball less a closed set of measure zero} and \ref{value phi lambda}, the statements in
%\eqref{case 2}
(2) hold.
Finally, by the biholomorphic invariant property of the Bergman metric again, the holomorphic sectional curvature of $M$, is the same as that of $\mathbb{B}^n$ and $\Omega$, which is $-2(n + 1)^{-1}$.
\end{proof}

\begin{proof}[{\bf Proof of Theorem \ref{main theorem - negative constant case}}] The Bergman space of any bounded domain in $\mathbb{C}^n$ separates points.  Therefore, Theorem \ref{main theorem - negative constant case} is a special case of Theorem \ref{general version main theorem}.
\end{proof}

We conclude this section with a proof of Proposition \ref{pronegligibleset}.

\medskip

{\bf Proof of Proposition \ref{pronegligibleset}.} The ``if'' implication is trivial, so we only need to prove the ``only if'' implication. For that, we assume the Bergman kernels $K_{\hat{U}}$ and $K_U$ are equal on $\hat{U}$. Write $F=U \setminus \hat{U}.$ We first claim that $F$ is of Lebesgue measure zero. In order to reach a contradiction, suppose that $m_{2n}(F) > 0$. As in the proof of Proposition \ref{Omega is a ball less a closed set of measure zero}, by the Lebesgue differentiation theorem applied to the characteristic function of $F$, we see there exists $z_0 \in F$ such that for all $r > 0$ sufficiently small,
$m_{2n}(F \cap \mathbb{B}^n(z_0, r)) > 0.$ We also note that the set $\{z \in U : K_U(z, z_0)=0\}$, if nonempty, is a complex hypervariety in $U$. Consequently, since $K_U(z,z_0)=\overline{K_U(z_0,z)}$, there exists $p \in \hat{U}$ such that $K_U(z_0, p) \neq 0.$  This implies
$$\int_F |K_U(z, p)|^2 dm_{2n}(z) >0.$$
On the other hand, the reproducing property of the Bergman kernels implies

$$\int_U |K_U(z, p)|^2 dm_{2n}(z)=K_U(p,p)=K_{\hat{U}}(p,p)= \int_{\hat{U}} |K_{\hat{U}}(z,p)|^2 dm_{2n}(z).$$
By the assumption on the Bergman kernels and using a complexification argument, we have $K_{\hat{U}}(\cdot,p)=K_U(\cdot, p)$ on $\hat{U}.$ Then the above equation yields $\int_F |K_U(z, p)|^2 dm_{2n}(z)=0,$ which is a contradiction. Hence, we must have  $m_{2n}(F) = 0$. Consequently, an orthonormal basis $\{\phi_i\}_{i=1}^{\infty}$ of $A^2(U)$ is an orthonormal set in $A^2(\hat{U}).$ But $K_{\hat{U}}=K_U$ on $\hat{U}$, which implies that $\{\phi_i\}_{i=1}^{\infty}$ is also an orthonormal basis of $A^2(\hat{U}).$ Finally we conclude that every function in $A^2(\hat{U})$ extends holomorphically to a function in $A^2(U)$ by repeating the last part in the proof of Proposition \ref{value phi lambda}. This proves $F$ is a Bergman-negligible subset of $U$. \qed

\section{Proof of Corollary \ref{cor2} and \ref{cor3}}\label{sectioncor23}

We prove Corollary \ref{cor2} and \ref{cor3} in this section.  As a preparation, we start with following topological fact about domains with $C^0$-boundary.

\begin{pro}\label{simply-connected region from C0 boundary}
	Let $D$ be a domain in $\mathbb{C}^n$.  Assume that $D$ has $C^0$-boundary at a point $q \in \partial D$.   Then there exists a neighborhood $O$ of $q$ in $\mathbb{C}^n$ such that $D \cap O$ is (connected and) simply connected.
\end{pro}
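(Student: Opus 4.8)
The plan is to work entirely in the local model guaranteed by Definition \ref{defnc0bdry}. Fix the homeomorphism $\Psi$ from a neighborhood $O'$ of $q$ onto a product $O_1\times O_2\subset\mathbb{R}^{2n-1}\times\mathbb{R}$ carrying $O'\cap D$ onto the subgraph region $W=\{(x,y)\in O_1\times O_2:\ y<\phi(x)\}$ for a continuous $\phi$ on $O_1$. Since $\Psi$ is a homeomorphism, it suffices to produce a neighborhood $V$ of the point $\Psi(q)=(x_0,\phi(x_0))$ inside $O_1\times O_2$ such that $V\cap W$ is connected and simply connected; then $O:=\Psi^{-1}(V)$ (intersected with $O'$) does the job for $D$.

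The key step is choosing $V$ to be a product $B\times(\phi(x_0)-\delta,\phi(x_0)+\delta)$ where $B\subset O_1$ is a small open Euclidean ball around $x_0$, with $B$ and $\delta$ chosen (using continuity of $\phi$ at $x_0$ and shrinking $O_2$) so that $\phi(x_0)-\delta<\phi(x)<\phi(x_0)+\delta$ for all $x\in B$; this forces $(\phi(x_0)-\delta,\phi(x_0)+\delta)\cap O_2$ to be an interval containing $\phi(x)$ for every $x\in B$, so that $V\cap W=\{(x,y):x\in B,\ \phi(x_0)-\delta<y<\phi(x)\}$ is precisely the open region strictly below the graph of $\phi|_B$ and above the level $\phi(x_0)-\delta$. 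I would then show this set is contractible by an explicit deformation retraction: the straight-line homotopy $H_t(x,y)=(x,\,(1-t)y+t(\phi(x_0)-\delta+\tfrac12(\phi(x)-\phi(x_0)+\delta)))$, or more simply the vertical push $H_t(x,y)=\big((1-t)x+tx_0,\ (1-t)y+t\,c\big)$ for a suitable constant $c$ in the fiber over $x_0$; one checks $H_t$ stays inside $V\cap W$ for all $t$ because each vertical fiber is an interval and $B$ is convex. Contractibility gives both connectedness and simple connectedness at once. (In particular this also shows $q$ is on the boundary of a single connected component of $D\cap O$, which is the parenthetical assertion.)

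The main obstacle is bookkeeping rather than depth: one must be careful that the fibers $(\phi(x_0)-\delta,\phi(x_0)+\delta)\cap O_2$ genuinely stay intervals after intersecting with $O_2$ (handled by shrinking so the whole interval lies in $O_2$), and that the homotopy never exits the region — this is where convexity of the ball $B$ in the $x$-variable and the "interval fiber" structure in the $y$-variable are both essential, and it is why a product neighborhood $V$ is the right choice rather than, say, a ball in $O_1\times O_2$. Everything else is continuity of $\phi$ and elementary point-set topology; no complex analysis enters, since the statement is purely topological.
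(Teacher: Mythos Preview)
Your overall strategy coincides with the paper's: pass to the local model via the homeomorphism, shrink to a product neighborhood so that the subgraph region has interval fibers over a ball, and then exhibit an explicit contraction. The paper does exactly this, first pushing any loop vertically down to a constant level $T$ chosen strictly below $\min_B\phi$, and then contracting horizontally in the ball $B$.

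One point needs correction. Your ``more simply'' one-step straight-line homotopy $H_t(x,y)=\big((1-t)x+tx_0,\ (1-t)y+tc\big)$ need not stay in $V\cap W$: the region $\{(x,y):x\in B,\ \phi(x_0)-\delta<y<\phi(x)\}$ is in general \emph{not} star-shaped about any point, because $\phi$ is merely continuous. For instance (with $x_0=0$, $\phi(0)=0$, $\delta=1$), let $\phi$ dip to $-0.9$ at $x=0.1$ and rise to $0.9$ at $x=0.5$; then the segment from $(0.5,0.8)$ to $(0,c)$ must satisfy $y<\phi(0.1)=-0.9$ when it crosses $x=0.1$, forcing $c<-1$, which is outside the region. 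So convexity of $B$ and the interval-fiber structure do not suffice for a single linear contraction. Your first homotopy (vertical retraction onto the midpoint section $x\mapsto\tfrac12(\phi(x_0)-\delta+\phi(x))$) is fine, but it only retracts onto a section over $B$; you then need a second step contracting that section within the region, which works because the section is a graph over the convex ball $B$. That two-step structure is exactly what the paper does, except the paper pushes to the \emph{constant} level $T$ (possible once one arranges $|\phi-\phi(x_0)|<\delta/2$ on $B$), which makes the second horizontal contraction transparently stay below the graph.
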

\begin{proof}
By Definition \ref{defnc0bdry}, there exists a homeomorphism $H$ from a neighborhood $O$ of $q$ in $\mathbb{C}^n$ to a domain $O_1 \times O_2 \subset \mathbb{R}^{2n-1} \times \mathbb{R}$. Moreover, $H$ maps
%$\{(x, y) \in \mathbb{R}^{2n -1} \times \mathbb{R}:\, |x| < \delta_1, |y| < \delta_2\}$.
$D \cap O$ onto
$W:=\{(x, y) \in O_1 \times O_2: y < \phi(x)\}$
for some $C^0$-function $\phi$ on $O_1$.
%there exists a homeomorphism from a neighborhood $O \subset \mathbb{C}^n$ of $q$ to a domain $O_1 \times O_2 \subset \mathbb{R}^{2n-1} \times \mathbb{R}$, which maps $O \cap D$  onto $\{(x,y) \in O_1 \times O_2: y > \phi(x) \}$ for some continuous function $\phi$ on $O_1$.
By shrinking $O$ and shifting, we may assume that $O_1=\{x \in \mathbb{R}^{2n -1}: |x| < \delta_1\}$ and $O_2=\{y \in \mathbb{R}: |y| < \delta_2\}$ for some $\delta_1, \delta_2 > 0$, and $\phi(0) = 0, H(q)=(0,0)$.  By shrinking $\delta_1$, we may assume that
\begin{equation}\label{eqnphi}
|\phi(x)| < {\delta_2 \over 2}, ~\text{for all}~x \in O_1.
\end{equation}
%Pick $W = \{(x, y) \in \mathbb{R}^{2n - 1} \times \mathbb{R}:\, |x| < \delta_1, |y| < \delta_2\}$.
Fix  $T \in (-\delta_2, -{\delta_2 \over 2})$, and fix any point $p=(x_0, y_0) \in W$. Then it is clear that $p_1:=(x_0, T)$ and $p_0:=(0, T)$ are both in $W$. Now, $p$ is connected to $p_1$ by a path in $W$: $u(t)=(x_0, (1-t)y_0+tT)$, $0 \leq t \leq 1.$ Moreover, $p_1$ is connected to $p_0$ by the path
$v(t)=((1-t)x_0, T)$, $0 \leq t \leq 1,$ in $W$. Therefore $W$ is path-connected, as all points in $W$ can be connected to $p_0.$ To establish the simply connectedness, it then suffices to prove:

\begin{claim}
	If $\gamma(t) = (x(t), y(t))$, $0 \leq t \leq 1$, is a loop in $W$ based at $p_0$, then $\gamma$ is null-homotopic in $W$.
\end{claim}
\begin{proof}
	\renewcommand{\qedsymbol}{$\blacksquare$}
	%Since $x(t)$ and $y(t)$ are $C^0$-functions, pick $T \in (-\delta_2, -{\delta_2 \over 2})$ such that
	%$$
	%T < \min_{0 \leq t \leq 1}y(t) < \min_{0 \leq t \leq 1}\phi(x(t)).
	%$$
	Define a loop homotopy with fixed base point $p_0$ in $W$:
	$$
	H_1(s, t) = (x(t), \mu(s,t)) := (x(t), (1 - s)y(t) + s T),
	$$
where $(s, t) \in [0, 1]^2$.  Since $-\delta_2 < \mu(s,t) < \phi(x(t))$, it follows that $H_1(s, t) \in W$ for all $s,t$.  Via $H_1$, $\gamma$ is deformed to a new loop $\alpha \subset W$ defined by $\alpha(t) = (x(t), T), 0 \leq t \leq 1$.
	%Notice that $x(t), 0 \leq t \leq 1$ gives rise to a loop $\alpha_0$ in $O_1$, which can be deformed to one point, say  $0 \in O_1,$ via some homotopy in $O_1$. Write $h(s,t)$, with $(s, t) \in [0, 1]^2,$ for such a homotopy, and set
Define another loop homotopy with fixed base point $p_0$ in $W$: $H_2(s,t)=((1-s)x(t), T),$ where $(s, t) \in [0, 1]^2$. It is clear that $\alpha$ gets deformed to the point $p_0,$ via the homotopy $H_2.$ %Moreover, since $T < -{\delta_2 \over 2}$ and $\phi$ satisfies (\ref{eqnphi}), we see $H_2(s,t) \in W$ for all $(s, t) \in [0, 1]^2.$
Therefore, $\alpha,$ and thus $\gamma,$ are null-homotopic in $W$.
\end{proof}
This establishes Proposition \ref{simply-connected region from C0 boundary}.
\end{proof}

Using Proposition \ref{simply-connected region from C0 boundary}, we can further obtain the following result.

\begin{pro}\label{Proposition variety cannot map to C0 boundary}
Let $D$ be a bounded domain in $\mathbb{C}^n$, and $M \subset \partial D$ be the set of all $C^0$-boundary points of $D$.  Let $V$ be a (nontrivial) complex hypervariety of $\mathbb{B}^n$.  Suppose that $G$ is a biholomorphism %$G|_{\mathbb{B}^n \setminus V}$
from $\mathbb{B}^n \setminus V$ onto $D$. Then, $G$ extends as a holomorphic map from $\mathbb{B}^n$ to $\overline{D}$, still denoted by $G$, such that $G(V)\subset\partial D$ and $M \cap G(V) = \emptyset$. 
%Assume that $G$ is a biholomorphic map from $\mathbb{B}^n \setminus V$ to $D$.
\end{pro}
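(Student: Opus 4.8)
The plan is to first produce the holomorphic extension of $G$ across $V$ using the $L^2$-extension structure supplied by the main theorem, and then argue by contradiction that no $C^0$-boundary point of $D$ can lie in the image $G(V)$. For the extension step, I would note that, by Theorem \ref{main theorem - negative constant case} applied through the biholomorphism $G$ (or directly by the explicit Bergman kernel formula), $V$ is a Bergman-negligible subset of $\mathbb{B}^n$, so in particular $\mathbb{B}^n\setminus V$ carries the same Bergman kernel and Bergman space as $\mathbb{B}^n$. Each coordinate component $G_k$ of $G$ is a bounded holomorphic function on $\mathbb{B}^n\setminus V$, hence in $A^2(\mathbb{B}^n\setminus V)$, and therefore extends to an element of $A^2(\mathbb{B}^n)$; alternatively, since $V$ is a complex hypervariety and $G$ is bounded (as $D$ is bounded), the classical Riemann removable singularity theorem for analytic sets already gives the holomorphic extension $G:\mathbb{B}^n\to\mathbb{C}^n$. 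By continuity, $G(\overline{\mathbb{B}^n\setminus V})\subset\overline{D}$, and since $G$ maps $\mathbb{B}^n\setminus V$ biholomorphically onto the open set $D$, the points of $V$ (being limits of points of $\mathbb{B}^n\setminus V$ but not interior to the fibers) must map into $\partial D$; I would spell this out by observing that if $G(v)\in D$ for some $v\in V$ then $G^{-1}$ would be defined and holomorphic near $G(v)$, forcing $G$ to be locally invertible at $v$, which is incompatible with $G$ collapsing the hypersurface $V$ near $v$ (the Jacobian of $G$ vanishes on $V$, or more simply, $G$ is not injective on any neighborhood of a generic point of $V$ since $\mathbb{B}^n\setminus V$ already exhausts the target).

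For the main assertion $M\cap G(V)=\emptyset$, suppose for contradiction that $q=G(v)\in M$ for some $v\in V$. By Proposition \ref{simply-connected region from C0 boundary}, there is a neighborhood $O$ of $q$ in $\mathbb{C}^n$ such that $D\cap O$ is connected and simply connected. Now I would pull this back: choose a small polydisc or ball $B$ around $v$ in $\mathbb{B}^n$ with $G(B)\subset O$; shrinking if necessary, $G(B\setminus V)\subset D\cap O$. The idea is that $B\setminus V$ is \emph{not} simply connected — removing a complex hypersurface from a ball creates nontrivial topology (a small loop linking $V$ transversally is a generator of $\pi_1(B\setminus V)$ that does not bound) — while its biholomorphic-onto-image sits inside the simply connected set $D\cap O$, and this topological incompatibility is the contradiction. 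To make the loop-linking argument rigorous I would pick a smooth point $v'$ of $V$ near $v$, choose local coordinates in which $V=\{z_1=0\}$, and take the loop $\gamma(t)=(\varepsilon e^{2\pi i t}, z')$ for fixed small $z'$; this loop is nontrivial in $\pi_1(B\setminus V)$ (it has winding number $1$ around $\{z_1=0\}$, detected by the integral $\frac{1}{2\pi i}\oint \frac{dz_1}{z_1}$), yet $G\circ\gamma$ is a loop in $D\cap O$, hence null-homotopic there, hence $\gamma$ is null-homotopic in $B\setminus V$ via the homeomorphism $G$ — contradiction.

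I should be slightly careful that $v$ itself may be a singular point of $V$; the fix is that singular points do not change the conclusion because a small smooth loop linking a nearby regular sheet of $V$ still maps into $G(B\setminus V)\subset D\cap O'$ for the neighborhood $O'$ of $q'=G(v')$, and $q'$ is also a $C^0$-boundary point provided it lies in $M$ — but $M$ need not contain $q'$. To sidestep this I would instead argue directly at $q$: since $D\cap O$ is simply connected, $G^{-1}:D\cap O\to B\setminus V$ pushes forward to a \emph{trivial} fundamental group, i.e. $\pi_1(B\setminus V)$ would have to be trivial, which is false for any neighborhood $B$ of any point of the hypervariety $V$ (the local fundamental group of the complement of a hypersurface germ is never trivial — it surjects onto $\mathbb{Z}$ via the linking number with an irreducible local component through or near $v$). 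This is the step I expect to be the main obstacle: making the local topology argument clean when $v\in\mathrm{Sing}(V)$, and ensuring the neighborhood $B$ can be taken small enough that $G(B\setminus V)$ genuinely lands in the prescribed simply connected piece $D\cap O$ — both are routine once one invokes, e.g., the conic structure of analytic set germs (Milnor) to identify $\pi_1$ of the complement of $V$ near $v$, but they require stating the right local-topology input rather than a hands-on homotopy.
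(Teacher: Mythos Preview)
Your approach is the same as the paper's, but there is a real gap in the homotopy step and an unnecessary detour at the singular-point issue.

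\medskip

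\textbf{The gap.} You write that $G\circ\gamma$ is null-homotopic in $D\cap O$, ``hence $\gamma$ is null-homotopic in $B\setminus V$ via the homeomorphism $G$.'' But $G|_{B\setminus V}$ is only a homeomorphism onto $G(B\setminus V)$, which is generally a \emph{proper} open subset of $D\cap O$; the null-homotopy in $D\cap O$ may leave $G(B\setminus V)$, so it does not pull back to $B\setminus V$. The correct move (and this is what the paper does) is to pull back through the \emph{global} biholomorphism $G:\mathbb{B}^n\setminus V\to D$: null-homotopy in $D\cap O$ gives null-homotopy in $D$, hence $\gamma$ is null-homotopic in $\mathbb{B}^n\setminus V$. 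Correspondingly, the nontriviality of $\gamma$ must be established in $\pi_1(\mathbb{B}^n\setminus V)$, not merely in $\pi_1(B\setminus V)$. Your local invariant $\tfrac{1}{2\pi i}\oint \tfrac{dz_1}{z_1}$ does not survive globally; what works is to take a global defining function $h$ for $V$ in $\mathbb{B}^n$ and observe that $\log h$ has nontrivial monodromy along $\gamma$, so $\gamma$ cannot bound in $\mathbb{B}^n\setminus V$.

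\medskip

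\textbf{The singular-point detour.} Your concern that the perturbed point $v'$ might have $G(v')\notin M$ is unfounded, and the paper's resolution is much simpler than invoking Milnor's conic structure. Since $D$ has $C^0$-boundary at $q$, the definition gives an open neighborhood of $q$ in $\partial D$ consisting entirely of $C^0$-boundary points; thus $M$ is open in $\partial D$. You have already shown $G(V)\subset\partial D$, so $G^{-1}(M)\cap V$ is open in $V$ and contains $v$; hence it contains a smooth point $v'$ of some irreducible component of $V$, with $G(v')\in M$. Now run the loop argument at $v'$. No local-topology input beyond ``smooth hypersurface complement has a linking loop'' is needed.
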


\begin{proof} The fact that $G$ extends as a holomorphic map from $\mathbb{B}^n$ to $\overline{D}$ follows immediately from the removable singularity theorem, since $D$ is bounded. The fact that $G(V)\subset\partial D$ follows by observing that $V\subset \partial(\bB^n\setminus V)$ and $G$ is, in particular, a proper map $\mathbb{B}^n \setminus V\to D$.
Now, suppose that $G(V) \cap M \neq \emptyset$.  Then there exists $p \in V$ such that $G(p) \in M$. By the definition of $C^0$-boundary, $M$ is an open subset of $\partial D.$ Consequently, as $G(V) \subset \partial D,$ $G$ maps an open neighborhood of $p$ in $V$ to $M$. Therefore, by perturbing $p$ in $V$, we may assume that $p$ is a smooth point of some irreducible component $V_0$ of $V$ and that $q=G(p) \in M.$ By Proposition \ref{simply-connected region from C0 boundary}, there exists a neighborhood $O$ of $q$ in $\mathbb{C}^n$ such that $D \cap O$ is simply connected. Since $G$ is continuous at $p$,
there is some neighborhood $U$ of $p$ in $\mathbb{B}^n$ such that $G(U) \subset O.$ %Note there exists some holomorphic function $h$ in $\mathbb{B}^n$ such that $V=\{h=0\}$. 
Let $h$ be a holomorphic function in $\bB^n$ such that $V=h^{-1}(0)$. Since $p$ is a smooth point of $V_0$ %and $h$ is the defining function of $V$,
there is a small loop $\beta \subset U \setminus V$ near $p$ such that any branch of $\log h$ has nontrivial monodromy along $\beta$. This implies $\beta$ is not null-homotopic in $\mathbb{B}^n \setminus V$.
	On the other hand, since $\beta \subset U \setminus V,$ we see $\alpha:=G(\beta)$ is contained in  $D \cap O$, which is simply-connected.
As a result, $\alpha$ is null-homotopic in $D \cap O$, and thus in $D$. This is a contradiction, as $G$ is a biholomorphism from $\mathbb{B}^n \setminus V$ to $D$. Thus, $M \cap G(V) = \emptyset$, as desired.
%neighborhood of $p \in V$ such that $F(U) \subset W$ and after a biholomorphic change of coordinates on $U$, $p = 0$ and $U = \mathbb{B}^n$ and $V \cap U = \{z \in \mathbb{B}^n:\, z_n = 0\}$.  Any loop $\beta(t) = (\beta^{\prime}(t), re^{it}) \subset (\mathbb{C}^{n-1} \times \mathbb{C}) \cap (U \setminus V)$ with $r > 0$ sufficiently small will never be null-homotopic.  However, since $F(U) \subset W$ and $W$ is simply-connected, $F \circ \beta$ is null-homotopic in $W$. \textcolor{red}{However, it needs to be that it is null-homotopic in $F(U \setminus V)$.  I am having trouble completing the proof of this proposition.}
\end{proof}

We also include the following easy fact.

\begin{pro}\label{pro needed for the proof of Corolllary 2 and Corollary 3}
	Let $D \subset \mathbb{C}^n$ be a bounded domain, and $M \subset \partial D$ the set of all $C^0$-boundary points of $D$.  Let $\Omega \subset \mathbb{B}^n$  be a domain such that $E:=\mathbb{B}^n \setminus \Omega$ is contained in $\partial \Omega$.  Let $G:\mathbb{B}^n \to \mathbb{C}^n$ be a holomorphic map such that $G$ restricts to a biholomorphism $G|_{\Omega}$ from $\Omega$ onto $D$, and denote by $V_G$ the locus of points in $\mathbb{B}^n$ where the Jacobian of $G$ has determinant equal to 0. Then $M \cap G(E \setminus V_G) = \emptyset$.
\end{pro}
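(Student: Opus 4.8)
The plan is to argue by contradiction. Suppose there is a point $p\in E\setminus V_G$ with $q:=G(p)\in M$. Since $M\subseteq\partial D$, the domain $D$ has $C^0$-boundary at $q$, so by Definition~\ref{defnc0bdry} there is a homeomorphism $H$ from a neighborhood $O$ of $q$ in $\mathbb{C}^n$ onto a product $O_1\times O_2\subseteq\mathbb{R}^{2n-1}\times\mathbb{R}$ carrying $O\cap D$ onto $W:=\{(x,y)\in O_1\times O_2:\ y<\phi(x)\}$ for some continuous $\phi$ on $O_1$; shrinking the chart as in the proof of Proposition~\ref{simply-connected region from C0 boundary}, we may assume the graph of $\phi$ lies in $O_1\times O_2$. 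The first step is to record that \emph{$q$ is not an interior point of $\overline D$}. Since $D$ is open, $O\cap\partial D=(O\cap\overline D)\setminus(O\cap D)$ and $\overline{O\cap D}^{\,O}=O\cap\overline D$; as $H$ carries closures to closures, $H(O\cap\overline D)$ equals the closure of $W$ in $O_1\times O_2$, which by continuity of $\phi$ is $\{(x,y)\in O_1\times O_2:\ y\le\phi(x)\}$. Hence $H(q)\in H(O\cap\partial D)=\{(x,y):\ y=\phi(x)\}$ (the graph of $\phi$), and since the interior of $\{(x,y)\in O_1\times O_2:\ y\le\phi(x)\}$ is exactly $W$ (again by continuity of $\phi$), we get $H(q)\notin\operatorname{int}H(O\cap\overline D)$, i.e. $q\notin\operatorname{int}\overline D$.

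Next I would exploit the hypothesis $p\notin V_G$: the holomorphic Jacobian determinant of $G$ is nonzero at $p$, so the inverse function theorem produces a connected open neighborhood $U\subseteq\mathbb{B}^n$ of $p$ with $G|_U$ a biholomorphism onto an open neighborhood $G(U)$ of $q$; after shrinking, $G(U)\subseteq O$.

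The core of the argument is then the inclusion $G(U)\subseteq\overline D$. Write $U=(U\cap\Omega)\sqcup(U\cap E)$. On the first part, $G(U\cap\Omega)\subseteq G(\Omega)=D$. On the second, recall the hypothesis $E\subseteq\partial\Omega\subseteq\overline\Omega$, so every point of $E$ is a limit of points of $\Omega$; since $E\subseteq\mathbb{B}^n$ and $G$ is continuous on all of $\mathbb{B}^n$, we get $G(E)\subseteq\overline{G(\Omega)}=\overline D$, hence $G(U\cap E)\subseteq\overline D$. Thus $G(U)\subseteq\overline D$. But $G(U)$ is an open neighborhood of $q$, so $q\in\operatorname{int}\overline D$, contradicting the first step. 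Therefore no such $p$ exists, i.e. $M\cap G(E\setminus V_G)=\emptyset$.

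I expect the only delicate point to be the first step — that a $C^0$-boundary point is never interior to $\overline D$ — which is a purely point-set matter, settled once the $C^0$-chart is normalized so that the graph of $\phi$ lies in $O_1\times O_2$. The remainder is immediate from two hypotheses, each genuinely needed: $E\subseteq\partial\Omega$, which forces $G(E)\subseteq\overline D$ (an interior point of $E$, were it allowed, could be mapped outside $\overline D$), and $p\notin V_G$, which is what makes the image $G(U)$ open (at a point of $V_G\cap E$, $G$ could fold $E$ into $\operatorname{int}\overline D$).
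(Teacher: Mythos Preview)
Your proof is correct and follows essentially the same route as the paper's: both argue by contradiction, use the inverse function theorem at $p\notin V_G$ to produce an open image $G(U)$, and observe that $G(U)\subseteq\overline{D}$, which is incompatible with $q$ being a $C^0$-boundary point. The only differences are cosmetic: the paper asserts the slightly sharper inclusion $G(E\cap U)\subseteq\partial D$ (via properness of $G|_\Omega$) rather than your $G(E\cap U)\subseteq\overline{D}$, and you supply the point-set details showing a $C^0$-boundary point is never interior to $\overline{D}$, which the paper leaves implicit.
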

\begin{proof}
	Suppose, in order to reach a contradiction, that there is a point $p \in E \setminus V_G$ such that $q=G(p) \in M$.  Since $G$ is locally biholomorphic at $p$, there exists a small neighborhood $U \subset \mathbb{B}^n$ of $p$ such that $O:= G(U)$ is an open neighborhood of $q$ in $\mathbb{C}^n$. But $G$ maps $E \cap U$ to $\partial D$ (since $E \subset \partial \Omega$ as in the proof of Proposition \ref{Proposition variety cannot map to C0 boundary}) and $U \setminus E$ to $D$.  That is, $O$ is an open neighborhood of $q$ containing only interior points and boundary points of $D$.  This is impossible, as $q$ is a $C^0$-boundary point of $D$.
\end{proof}

We are now ready to prove the first two corollaries and will start with Corollary \ref{cor2}.

\medskip

{\bf Proof of Corollary \ref{cor2}.} By the hypothesis and Theorem \ref{main theorem - negative constant case}, %the Bergman metric of $D$ has constant $\tau = -2(n + 1)^{-1}$ and
$D$ is biholomorphic to $\Omega = \mathbb{B}^n\setminus E$, where $E$ is a Bergman-negligible subset of $\mathbb{B}^n$.  Let $G$ be a biholomorphic map from $\Omega$ to $D$.  Since $D$ is bounded, $G$ extends to a holomorphic map $\mathbb{B}^n\to\overline{D}$.  As above, we denote the extended map by $G$ as well. Let $V_G$ denote the (possibly empty) locus of points in $\mathbb{B}^n$ where the determinant of the Jacobian of $G$ equals 0.  Since $G$ is biholomorphic in $\Omega$, we have $V_G \subset E$. Note also that $G(E) \subset \partial D,$ and  $D$ has $C^0$-boundary at every point of $\partial D$. Hence, by Proposition \ref{pro needed for the proof of Corolllary 2 and Corollary 3}, $E \setminus V_G= \emptyset,$ and thus $V_G = E$. By Proposition \ref{Proposition variety cannot map to C0 boundary}, $E= \emptyset$. This establishes Corollary \ref{cor2}. \qed

\medskip

We now prove Corollary \ref{cor3}. %is a consequence of part (2) of Corollary \ref{cor2}.

\medskip

{\bf Proof of Corollary \ref{cor3}.} Since $q \in \partial D$ is assumed to be a boundary orbit accumulation point, by definition, there is a sequence $\{\phi_j\}_{j=1}^{\infty} \subset \mathrm{Aut}(D)$ and $p \in \Omega$ such that $\phi_j(p) \to q$ as $j \to \infty$.
 We first establish the following lemma.

\begin{lem}\label{Lemma for phij converge to phi locally uniformly}
The maps $\phi_j$ converges locally uniformly on $D$ to the constant function $\phi \equiv q$.
%such that $\phi(D) \subset \partial D$.  If condition \ref{condition A corollary 2} holds, then $\phi \equiv q \in \partial D$.
\end{lem}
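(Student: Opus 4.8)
The goal is to upgrade the single-point convergence $\phi_j(p)\to q$ into locally uniform convergence of the whole sequence $\{\phi_j\}$ to the constant map $q$. The standard tool here is a normal families argument: since $D$ is bounded, $\{\phi_j\}$ is a uniformly bounded sequence of holomorphic maps, hence forms a normal family by Montel's theorem. So any subsequence has a further subsequence converging locally uniformly to some holomorphic map $\psi:D\to\overline D$. The plan is to show that every such limit $\psi$ must be the constant map $\equiv q$; since the limit is always the same, the full sequence converges to $q$.

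The key step is therefore to analyze a locally uniform limit $\psi$ of a subsequence $\phi_{j_k}$. We know $\psi(p)=q\in\partial D$. The crucial dichotomy (a classical fact, e.g. via the Cartan–Carathéodory type reasoning, or directly from the fact that $\mathrm{Aut}(D)$ acts by biholomorphisms on a bounded domain): a locally uniform limit of automorphisms of a bounded domain is either again an automorphism of $D$ or else has image entirely contained in $\partial D$. Since $\psi(p)=q\in\partial D$, the first alternative is excluded, so $\psi(D)\subset\partial D$. It then remains to show $\psi$ is constant. For this I would invoke that the Bergman metric of $D$ has holomorphic sectional curvature approaching $\tau=-2(n+1)^{-1}<0$ near $q$ — more precisely, use the hypothesis \eqref{eqnhsc} together with the fact that the automorphisms $\phi_j$ are isometries of the Bergman metric $g_D$, which forces the push-forward of $g_D$ along $\phi_j$ to be $g_D$ itself; combined with the hypothesis that $\partial D\cap O$ contains no nontrivial complex varieties near $q$, any nonconstant $\psi$ with $\psi(D)\subset\partial D$ would produce a complex variety through $q$ inside $\partial D$, a contradiction. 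Hence $\psi$ is constant, and being equal to $q$ at $p$, we get $\psi\equiv q$.

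Assembling this: every subsequential limit of $\{\phi_j\}$ equals the constant $q$, so by the usual subsequence argument $\phi_j\to q$ locally uniformly on $D$. The main obstacle I anticipate is the rigorous justification that a nonconstant holomorphic $\psi:D\to\overline D$ with $\psi(D)\subset\partial D$ would force $\partial D$ to contain a nontrivial complex variety near $q$: one needs to control where $\psi(D)$ lands — a priori it could cluster at boundary points far from $q$. This is handled by a more careful local argument near $q$, using that $\phi_{j_k}(p)\to q$ and the open mapping / rank considerations to see that a neighborhood of $p$ maps under $\psi$ into a neighborhood of $q$ in $\partial D$; if $\psi$ has positive rank there, its image is a positive-dimensional complex-analytic subset of $\partial D\cap O$, contradicting the no-complex-varieties hypothesis. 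This localization step, together with correctly invoking the dichotomy for limits of automorphisms, is where the real work lies; the normal-families wrapper around it is routine.
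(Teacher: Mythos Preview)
Your proposal is correct and follows essentially the same route as the paper's proof: Montel's theorem for normal families, the dichotomy that a locally uniform limit of automorphisms of a bounded domain is either an automorphism or maps into $\partial D$, then the localization (by continuity, a neighborhood of $p$ is sent by $\psi$ into $\partial D\cap O$) together with the no-complex-varieties hypothesis to force $\psi$ constant near $p$, hence everywhere by the identity principle. One remark: the curvature hypothesis \eqref{eqnhsc} and the isometry property of the Bergman metric that you invoke midway are not needed for this lemma at all---the paper's argument uses only the assumption that $\partial D\cap O$ contains no nontrivial complex varieties, so you can safely drop that detour.
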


\begin{proof}
Since $D$ is bounded, by Montel's Theorem, every subsequence of $\{\phi_j\}_{j \geq 1}$ has a further subsequence that converges locally uniformly on $D$. It therefore suffices to prove that all convergent subsequences of $\{\phi_j\}_{j \geq 1}$ have the same limit $\phi \equiv q$. For this, fix any such convergent subsequence, which we denote again by $\{\phi_j\}_{j \geq 1}$ for simplicity, and let $\phi$ denote its limit.
%if necessary, we can assume that $\phi_j$ approaches $\phi$, as $j$ tends to infinity, locally uniformly in $D$ for some holomorphic map $\phi$ on $D$.
Then, either $\phi \in \hbox{Aut}(D)$ or $\phi(D) \subset \partial D$ (see, e.g., \cite[Theorem 5.1.2]{Kr13}).  By hypothesis, $\phi(p) = \lim_{j \to \infty}\phi_j(p) = q \in \partial D$.  Hence, the second alternative holds, $\phi(D) \subset \partial D$.  %Now suppose that condition \ref{condition A corollary 2} holds.
Let now $W$ be a small open neighborhood of $p$ such that $\phi(z) \in O \cap \partial D$ for every $z \in W$, where $O$ is the neighborhood of $q$ given in the assumption of Corollary \ref{cor3}.
Since $O \cap \partial D$ contains no (nontrivial) complex varieties, $\phi(W) = \{q\}$.  By the analyticity of $\phi$, it follows that $\phi \equiv q$, as desired.
\end{proof}

By Lemma \ref{Lemma for phij converge to phi locally uniformly},
$\lim_{j \to \infty} \phi_j(z) = q$ for every $z \in D.$ Fix an arbitrary $z \in D$ and let $z_j = \phi_j(z)$. By assumption, we have
$$
\lim_{j \to \infty} \sup_{X \in T_{z_j}D\setminus\{0\}} \left| H(g_D)(z_j, X) - \tau \right| = 0.
$$
Since $\phi_j \in \hbox{Aut}(D)$ and the Bergman metric is invariant under $\hbox{Aut}(D)$, we have $H(g_D)(z_j, d\phi_j(X))=H(g_D)(z, X)$, and we conclude that
\begin{equation}\label{constant holo sectional curvature in the neighborhood W}
	H(g_D)(z, X) \equiv \tau, \quad z \in D, \quad X \in T_z(D)\setminus\{0\}.
\end{equation}
That is, $g_D$ has constant holomorphic sectional curvature $\tau$ in $D$. Then, by Theorem \ref{main theorem - negative constant case}, $\tau = -{2(n + 1)^{-1}}$ and $D$ is biholomorphic to $\mathbb{B}^n$ less a Bergman-negligible subset $E$.  This proves the first assertion of Corollary \ref{cor3}.

To prove the second assertion, we assume that $D$ has $C^0$-boundary at $q$. We will prove $E=\emptyset,$ which then implies that $D$ is biholomorphic to $\mathbb{B}^n$, completing the proof. For this, let $G$ be a biholomorphic map from $\Omega$ to $D$, and $F: D \to \Omega$ its inverse. Again, $G$ extends to a holomorphic map on $\mathbb{B}^n$, which we continue to denote by $G$. As above (in the proofs of Propositions \ref{Proposition variety cannot map to C0 boundary} and \ref{pro needed for the proof of Corolllary 2 and Corollary 3}), we have $G(E) \subset \partial D.$ %We will show that $E = \emptyset$.  For that,
As in the proof of Corollary \ref{cor2},  let $V_G$ denote the locus of points in $\mathbb{B}^n$ where the determinant of the Jacobian of $G$ equals 0.
%where $J_{\mathbb{C}}G$ denotes the complex Jacobian of $G$.
We again have $V_G \subset E$.
Let $E^{-} := E \setminus V_G$, and set $\Omega^{+} := \mathbb{B}^n\setminus V_G = \Omega \cup E^{-}$, which is clearly a domain in $\mathbb{C}^n$.  Let $D^+ = G(\Omega^+)$.  One readily observes that $G$ is one-to-one and locally biholomorphic  on $\Omega^+$. Consequently, $G$ is biholomorphic on $\Omega^+$ and $D^+$ is a domain in $\mathbb{C}^n$.  The inverse of $G|_{\Omega^+}$ is the holomorphic extension of $F$, initially defined on $D$, to $D^+$.  We will denote the extension also by $F$. By the definition of  $C^0$-boundary, there exists a small neighborhood $O$ of $q$ in $\mathbb{C}^n$, such that $M_0:=O \cap \partial D$ consists of only $C^0$-boundary points of $D$. By Proposition \ref{pro needed for the proof of Corolllary 2 and Corollary 3}, $G(E^{-}) \subset \partial D \setminus M_0$. Since $D^+ = D \cup G(E^-)$, it follows that %there exists an open subset $W$ of $\mathbb{C}^n$ with $M \subset W$ such that
$$
O \cap D^+ = O \cap D.
$$
As a result, $M_0$ is also an open $C^0$-boundary piece of  $D^+$.
If $V_G$ is nonempty, we apply Proposition \ref{Proposition variety cannot map to C0 boundary} to the biholomorphism $G|_{\Omega^+}$ from $\Omega^{+}$ to $D^+$, and obtain $M_0 \cap G(V_G)=\emptyset$ (If $V_G$ is empty, the conclusion is trivial). Consequently, as $E=E^- \cup V_G,$ we have
\begin{equation}\label{eqnge}
G(E) \subset \partial D \setminus M_0.
\end{equation}
Now, let  $\psi_j := F \circ \phi_j \circ G|_{\Omega}$, for $j \geq 1$. It is clear that $\psi_j \in \hbox{Aut}(\Omega)$, and thus it preserves the Bergman metric of $\Omega$, which equals the Bergman metric of $\mathbb{B}^n.$ Consequently, by Calabi \cite{Ca53} (see also \cite[Theorem 1.1]{HL12}), $\psi_j$  extends as an automorphism of $\mathbb{B}^n$, which we still denote by
$\psi_j \in \hbox{Aut}(\mathbb{B}^n).$ Since $E=\mathbb{B}^n \setminus \Omega,$ we must have $\psi_j(E) = E$.
%and we denote the extension by $\psi_j$.  Since $\psi_j$ is an automorphism of $\Omega$, $\psi_j(E) \subset \partial \Omega$.  \textcolor{red}{Why is $\psi_j(E) = E$?}
Next set
$$
\phi_j^+ = G \circ \psi_j \circ F|_{D^+}.
$$
It is clear that $\phi_j^{+}$ is a holomorphic map on $D^+$ and $\phi_j^+|_D = \phi_j \in \hbox{Aut}(D)$.  Thus, $\phi_j^+(D^+) \subset \overline{D}$ and, hence, $\phi_j^+$ is bounded on $D^+$.  By Montel's theorem, after passing to a subsequence if needed, we can assume that $\phi_j^+$ converges to $\phi^+$ locally uniformly in $D^+$ for some holomorphic map $\phi^+$ in $D^+$.  However, since $\phi_j^+|_D = \phi_j \to \phi$, where $\phi \equiv q$, it follows that $\phi^+ \equiv q$.  We next prove:

\begin{claim} The set $E^-$ is empty.
\end{claim}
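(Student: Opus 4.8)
The plan is to argue by contradiction. Suppose $E^{-}\neq\emptyset$, fix $p_{0}\in E^{-}$, and set $q_{0}:=G(p_{0})$. Since $p_{0}\notin V_{G}$, the map $G$ is locally biholomorphic at $p_{0}$, so I would choose a small ball $U\ni p_{0}$ with $\overline{U}\subset\Omega^{+}$ compact (possible since $V_{G}$ is closed and $p_{0}\notin V_{G}$); this choice automatically gives $U\cap E=U\cap E^{-}$, and $O':=G(U)$ is then an open neighborhood of $q_{0}$ that is precompact in $D^{+}$ (indeed $\overline{O'}\subset G(\overline{U})$ and $G(\overline{U})$ is a compact subset of $D^{+}=G(\Omega^{+})$). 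Note that $O'\subset\overline{D}$ with $O'\cap D=G(U\setminus E)$ and $O'\cap\partial D=G(U\cap E^{-})$.

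The key is to establish two facts about the holomorphic maps $\phi_{j}^{+}=G\circ\psi_{j}\circ F$ restricted to $O'$, using that for $z\in U$ one has $\phi_{j}^{+}(G(z))=G(\psi_{j}(F(G(z))))=G(\psi_{j}(z))$. First, for \emph{every} $j$ we have $\phi_{j}^{+}(O')\subset\overline{D}\setminus M_{0}$: if $z\in U\setminus E$ then $\psi_{j}(z)\in\Omega$, so $G(\psi_{j}(z))\in D$; and if $z\in U\cap E^{-}$ then $\psi_{j}(z)\in\psi_{j}(E)=E$, so $G(\psi_{j}(z))\in G(E)\subset\partial D\setminus M_{0}$ by \eqref{eqnge}. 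Second, since $\phi_{j}^{+}\to q$ locally uniformly on $D^{+}$ and $\overline{O'}$ is a compact subset of $D^{+}$, for all sufficiently large $j$ we get $\phi_{j}^{+}(O')\subset O$, where $O$ is the neighborhood of $q$ with $O\cap\partial D=M_{0}$.

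Combining these, for all large $j$,
\[
\phi_{j}^{+}(O')\ \subset\ (\overline{D}\setminus M_{0})\cap O\ =\ \big((D\cap O)\cup M_{0}\big)\setminus M_{0}\ =\ (D\cap O)\setminus M_{0}\ =\ D\cap O\ \subset\ D,
\]
where I used $\overline{D}\cap O=(D\cap O)\cup(\partial D\cap O)=(D\cap O)\cup M_{0}$ and $M_{0}\cap D=\emptyset$. In particular $\phi_{j}^{+}(q_{0})\in D$. On the other hand, $F(q_{0})=p_{0}$ because $G|_{\Omega^{+}}\colon\Omega^{+}\to D^{+}$ is a biholomorphism with inverse $F$, so $\phi_{j}^{+}(q_{0})=G(\psi_{j}(p_{0}))$; and $\psi_{j}(p_{0})\in\psi_{j}(E)=E$ forces $\phi_{j}^{+}(q_{0})\in G(E)\subset\partial D$, contradicting $\phi_{j}^{+}(q_{0})\in D$. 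Hence $E^{-}=\emptyset$.

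I expect the only delicate point to be the bookkeeping: arranging $O'$ to be precompact in $D^{+}$ (this is exactly where $p_{0}\notin V_{G}$ is needed, so that $G$ is an open map near $p_{0}$) and tracking that the ``boundary part'' $\phi_{j}^{+}(O'\cap\partial D)=G(\psi_{j}(U\cap E^{-}))$ stays inside $\partial D\setminus M_{0}$, which is precisely where \eqref{eqnge} together with the invariance $\psi_{j}(E)=E$ enter. Notably, in contrast with the classical boundary-orbit-accumulation arguments, this approach requires no dichotomy for the sequence $\{\psi_{j}\}$ (convergence to an automorphism of $\mathbb{B}^{n}$ versus escape to a boundary point of $\mathbb{B}^{n}$).
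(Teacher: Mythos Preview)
Your proof is correct and follows essentially the same approach as the paper: pick a point of $E^{-}$, push it forward by $G$, use the locally uniform convergence $\phi_j^{+}\to q$ on $D^{+}$, and contradict \eqref{eqnge} via the invariance $\psi_j(E)=E$. The paper's argument is more streamlined, however: it works with the single point $z^{*}=G(w^{*})$ throughout and never introduces the neighborhood $O'$. Indeed, once you know $\phi_j^{+}(q_0)=G(\psi_j(p_0))\in G(E)\subset\partial D\setminus M_0$ and $\phi_j^{+}(q_0)\to q$, you immediately get $\phi_j^{+}(q_0)\in O\cap\partial D=M_0$ for large $j$, which is already the contradiction; your detour through $\phi_j^{+}(O')\subset D$ is unnecessary (and in fact your ``first fact'' applied to $z=p_0$ alone already contains the paper's whole argument).
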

\begin{proof}
	\renewcommand{\qedsymbol}{$\blacksquare$}
Suppose that $E^- \neq \emptyset$ and let $w^* \in E^- = E \setminus V_G$.  Let $z^* = G(w^*) \in \partial D \cap D^+$.  Then $\phi_j^+(z^*)$ approaches $q$ as $j$ tends to $\infty$.  Notice that
$$
\phi_j^+(z^*) = G \circ \psi_j \circ F(z^*) = G \circ \psi_j(w^*).
$$
Write $w_j := \psi_j(w^*)$ and recall that $\psi_j(E) = E$, which implies that $w_j \in E$.  Thus, $G(w_j) \in \partial D$.  Since $G(w_j) = \phi_j^+(z^*)$ tends to $q$ as $j\to \infty$, for sufficiently large $j$ we then have that $G(w_j) \in M_0$.  This contradicts \eqref{eqnge}.
%This violates that $G(E) \subset \partial D \setminus M_0$.
\end{proof}

  %However, since $G:\Omega \to D$ is a biholomorphism, $G(E) \subset \partial D$.  Thus, $E = \emptyset$.

Consequently, $E = V_G$.  We will now show that $V_G = \emptyset$, which will complete the proof of Corollary \ref{cor3}.  Suppose that $V_G\neq\emptyset$. We pick a smooth point $\hat{w}$ on some irreducible component of $E = V_G$.  Then there exists a small analytic disk $A$ such that $\overline{A} \subset \mathbb{B}^n$ and $\overline{A} \cap E = \{\hat{w}\}$.  Thus, $\overline{A} \setminus \{\hat{w}\} \subset \Omega$.  As before, let
$$
\psi_j = F \circ \phi_j \circ G \in \hbox{Aut}(\Omega) \cap \hbox{Aut}(\mathbb{B}^n),
$$
and let $K := G(\partial A) \ssubset D$.  By Lemma \ref{Lemma for phij converge to phi locally uniformly}, $\phi_j$ approaches $q$ uniformly on $K$ as $j\to\infty$.  Note that $\phi_j(K) = G \circ \psi_j(\partial A)$ and thus
$$
\max_{w \in \partial A} \|G \circ \psi_j(w) - q\| \to 0, \quad j \to \infty.
$$
Since $G \circ \psi_j$ is holomorphic in $A$ and continuous on $\overline{A}$, the maximum principle implies that
$$
\|G \circ \psi_j(\hat{w}) - q\| \to 0, \quad j \to \infty.
$$
Recall that $\psi_j(E) = E$, which implies that $\hat{w}_j := \psi_j(\hat{w}) \in E = V_G$.  Since $G(\hat{w}_j) \to q$ as $j \to \infty$, for sufficiently large $j$ we have $G(\hat{w}_j) \in M_0$.  This contradicts (\ref{eqnge}), demonstrating that $E=\emptyset,$ as desired.
\qed

\bigskip

\section{Examples}
\label{sectionexamples}

We give a few examples to justify the statements in Remark \ref{rmkcor23}. The first example will be used in the construction given in Example \ref {example 2 for corollary 23} below.

\begin{example}\label{example 1 for corollary 23}
	Let $D_1 = \mathbb{B}^2 \setminus \{(z_1, z_2) \in \mathbb{B}^2:\, z_2  = 0\}$.  Let
	$$
	\phi_j(z_1, z_2) = \left( {a_j-z_1 \over 1 - \overline{a_j}z_1}, {(1 - |a_j|^2)^{1 \over 2} \over 1 - \overline{a_j}z_1}z_2 \right), \quad j \geq 1,
	$$
where $\{a_j\}_{j=1}^{\infty} \subset \mathbb{C}$ is a sequence of points with $|a_j| <1$ for all $j$ and $a_j \to 1$ as $j \to \infty$. Let $\phi$ be the constant map
$\phi(z_1, z_2) \equiv (1, 0)$. It is easy to verify that every $\phi_j \in \hbox{Aut}(\mathbb{B}^2) \cap \hbox{Aut}(D_1)$, and $\phi_j$ converges to $\phi$ locally uniformly on $\mathbb{B}^2$, as $j \to \infty.$ Consequently, $(1, 0)$ is a boundary orbit accumulation point of $D_1$.
Moreover, since the Bergman kernel of $D_1$ is the restriction of that of $\mathbb{B}^2$,
the Bergman metric of $D_1$ has constant holomorphic sectional curvature $-\frac{2}{3}.$
%and thus $D_1$ satisfies the assumption of Theorem \ref{corollary - curvature approaching boundary point}. (More precisely, it satisfies condition \ref{condition B corollary 2}.)
\end{example}

\medskip

\begin{example}\label{example 2 for corollary 23}
Let $D_1$ be as in Example \ref{example 1 for corollary 23}, and let $D_2$ be the bounded domain in $\mathbb{C}^2$:
$$\{z = (z_1, z_2) \in \mathbb{C}^2:  |z_1|^2 + |z_2|^2(|z_2|^2 - 1) < 0\}.$$
Note that $D_2$ is the biholomorphic image of $D_1$ under the map $\Phi(z) = (z_1z_2, z_2)$.  Note also that $\Phi$ maps the variety  $V:=\{(z_1, z_2) \in \mathbb{C}^2:\, z_2 = 0\}$ to $(0, 0) \in \partial D_2,$ and $\partial D_2$ is the image of $\partial \mathbb{B}^2$ under $\Phi$.  Thus, $\partial D_2$ contains no (nontrivial) complex varieties.  Let $\varphi_j = \Phi \circ \phi_j \circ \Phi^{-1}$ where $\phi_j$ is the automorphism of $D_1$ from Example \ref{example 1 for corollary 23}.  Clearly, for $j \geq 1$, $\varphi_j$ is an automorphism of $D_2$.  Moreover, as $j$ tends to infinity, $\varphi_j$ converges to the constant map $\varphi \equiv (0, 0)$ locally uniformly on $D_2$. Therefore, $q:=(0,0)=\Phi(1,0)$ is a boundary orbit accumulation point of $D_2.$ Since $D_2$ is biholomorphic to $D_1$, the Bergman metric of $D_2$ has constant holomorphic sectional curvature equal to $-\frac{2}{3}.$
On the other hand, since $\Phi(V)=q$, by Proposition \ref{Proposition variety cannot map to C0 boundary}, $D_2$ cannot have $C^0$-boundary at $q$. It is clear that $D_2$ has $C^0$-boundary at all other points of $\partial D_2.$ Furthermore, $D_1$, and therefore also $D_2,$ is not biholomorphic to $\mathbb{B}^2.$ Thus, by the above discussion, $D_2$ satisfies all the assumptions of Corollary \ref{cor3} except the $C^0$-smoothness at $q.$ This example justifies the assertions in Remark \ref{rmkcor23}.
%(Indeed it satisfies both conditions \ref{condition A corollary 2} and \ref{condition B corollary 2}.)
\end{example}

The above examples in $\mathbb{C}^2$ can be easily generalized to higher dimensions as follows.

\begin{example}\label{example 3}
Let $W_1=\mathbb{B}^n \setminus \{z=(z',z_n)=(z_1, \ldots,z_{n-1}, z_n) \in \mathbb{B}^n: z_n=0\}.$ Let $\{A_j(z')\}_{j=1}^{\infty}$ be a sequence of
automorphisms of $\mathbb{B}^{n-1} \subset \mathbb{C}_{z'}^{n-1}$ such that $A_j \to A$ for some constant map $A \equiv p_0$ on every compact subset of $\mathbb{B}^{n-1}.$ Here $p_0$ is a point on $\partial \mathbb{B}^{n-1}.$ Let $T_j(z')$ denote the determinant of the Jacobian of $A_j(z')$. Note that $T_j$ is nowhere zero in $\mathbb{B}^{n-1}$, but $T_j \to 0$ on every compact subset of $\mathbb{B}^{n-1}.$  Let
$$\phi_j(z)=\left(A_j(z'), \left(T_j(z')\right)^{\frac{1}{n}}z_n \right).$$
Here we pick some branch of $\left(T_j(z')\right)^{\frac{1}{n}}$ in $\mathbb{B}^{n-1}$. By the transformation formula of the Bergman kernel of $\mathbb{B}^{n-1},$ we see $(1-|z'|^2)^{n} |T_j(z^\prime)|^2=(1-|A_j(z')|^2)^n .$  Using this fact, it is easy to verify that every $\phi_j$ belongs to $\hbox{Aut}(\mathbb{B}^n) \cap \hbox{Aut}(W_1)$. We also note that $\phi_j$ converges to the constant map $\phi \equiv (p_0,0),$ locally uniformly on $\mathbb{B}^2$, as $j \to \infty.$ Let $W_2$ be the bounded domain in $\mathbb{C}^n:$
$$\{z = (z', z_n) \in \mathbb{C}^n:  |z'|^2 + |z_n|^2(|z_n|^2 - 1) < 0\}.$$
Note that $W_2$ is the biholomorphic image of $W_1$ under the map $\Phi(z) = (z_1z_n, \ldots, z_{n-1}z_n, z_n)$.
Moreover, $\Phi$ maps the variety  $\{ z \in \mathbb{C}^n:\, z_n = 0\}$ to $(0, 0) \in \partial W_2,$
Let $\varphi_j = \Phi \circ \phi_j \circ \Phi^{-1}$. Then, each $\varphi_j$ is an automorphism of $W_2$.  Moreover, $\varphi_j$ converges to the constant map $\varphi \equiv (0, 0)$ locally uniformly on $W_2$. Consequently, $q:=(0,0)=\Phi(p_0,0)$ is a boundary orbit accumulation point of $W_2.$
As in Example \ref{example 2 for corollary 23},  the Bergman metrics of $W_1$, and thus $W_2$, have constant holomorphic sectional curvature equal to $-2(n + 1)^{-1}$.
Moreover, since $\partial W_2$ is the image of $\partial \mathbb{B}^n$ under $\Phi$, $\partial W_2$ has no (nontrivial) complex varieties. The domain $W_2$ fails, however, to have $C^0$-boundary at $q$ by Proposition \ref{Proposition variety cannot map to C0 boundary} but has $C^0$-boundary at all other points of $\partial W_2.$

%It is clear that $\phi_j$
\end{example}

\section{Proof of Corollary \ref{cor1ellipsoid}}\label{sectioncor1}

In this section, we give a proof of Corollary \ref{cor1ellipsoid}. Let $D$ and $\tau$ be as in Corollary \ref{cor1ellipsoid}.  First, it immediately follows from Theorem \ref{main theorem - negative constant case} that $\tau = -2(n + 1)^{-1}$. Before proving the remaining conclusion, we pause to recall the following simple fact. We will omit its proof, as it follows easily from the transformation law of the Bergman kernel.

\begin{lem}\label{Bergman kernel of the ellipsoid}
	Let $H = (h_{i\bar{j}})_{i, \bar{j} = 1}^n$ be a positive definite Hermitian matrix and $A$ the $n \times n$ matrix such that $H = AA^*$.  Let $E_H$ denote the complex ellipsoid given by \eqref{Ellipsoid equation}.
	The map $L: E_H \to \mathbb{B}^n$ given by $L(\zeta) = (n + 1)^{-{1 \over 2}}\zeta A$ is a biholomorphism and the Bergman kernel of $E_H$ is given by
	$$
	K_{E_H}(\zeta, \zeta) = {C \over (1 - {1 \over n + 1}\zeta H\zeta^*)^{n + 1}},
	$$
	where $\zeta$ is interpreted as a row vector and $C = {n! \over \pi^n}{\det H \over (n + 1)^n}$.
\end{lem}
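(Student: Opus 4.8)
The plan is to establish that $L$ is a biholomorphism by a direct computation and then simply invoke the transformation law of the Bergman kernel under biholomorphisms, together with the explicit formula for $K_{\mathbb{B}^n}$.

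First I would check the mapping property. Since $\zeta$ is a row vector and $H = AA^*$,
$$
\|L(\zeta)\|^2 = L(\zeta)L(\zeta)^* = \frac{1}{n+1}\,\zeta AA^*\zeta^* = \frac{1}{n+1}\,\zeta H\zeta^*,
$$
so $\|L(\zeta)\| < 1$ precisely when $\zeta \in E_H$. As $H$ is positive definite, $A$ is invertible, hence $L$ is a linear automorphism of $\mathbb{C}^n$ whose restriction is a biholomorphism $E_H \to \mathbb{B}^n$.

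Next I would compute the (constant) holomorphic Jacobian of $L$. Writing $L(\zeta) = (n+1)^{-1/2}\zeta A$ componentwise gives $\partial L_i/\partial \zeta_j = (n+1)^{-1/2}A_{ji}$, so the Jacobian matrix of $L$ equals $(n+1)^{-1/2}A^{\mathsf T}$ and therefore
$$
\bigl|\det L'(\zeta)\bigr|^2 = (n+1)^{-n}\,|\det A|^2 = (n+1)^{-n}\det(AA^*) = (n+1)^{-n}\det H,
$$
where I used $|\det A|^2 = \det A\,\overline{\det A} = \det(AA^*)$.

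Finally, applying the transformation law $K_{E_H}(\zeta,\zeta) = |\det L'(\zeta)|^2\,K_{\mathbb{B}^n}\bigl(L(\zeta),L(\zeta)\bigr)$ together with $K_{\mathbb{B}^n}(w,w) = \tfrac{n!}{\pi^n}(1-\|w\|^2)^{-(n+1)}$ and the identity for $\|L(\zeta)\|^2$ above yields
$$
K_{E_H}(\zeta,\zeta) = \frac{n!}{\pi^n}\,\frac{\det H}{(n+1)^n}\cdot\frac{1}{\bigl(1-\tfrac{1}{n+1}\zeta H\zeta^*\bigr)^{n+1}},
$$
which is exactly the asserted formula with $C = \tfrac{n!}{\pi^n}\tfrac{\det H}{(n+1)^n}$. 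I do not anticipate any genuine obstacle here; the only point requiring a bit of care is the row-vector convention and the transpose appearing in the Jacobian matrix, which is harmless since it does not affect the determinant.
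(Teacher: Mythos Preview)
Your proof is correct and is precisely the computation the paper has in mind: the paper omits the proof entirely, remarking that it ``follows easily from the transformation law of the Bergman kernel,'' and you have supplied exactly that argument. There is nothing to add.
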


We continue the proof of Corollary \ref{cor1ellipsoid}. We write $K=K_D$ for the Bergman kernel of $D$. We note that $K(z, p)$ has no zeros on $D$ by \cite[Theorem 1.2(1)]{DW22a} (this also follows from Theorem \ref{main theorem - negative constant case}).  Consequently, $T_p$ (given by $\eqref{Bergman representative coordinates}$) is holomorphic in $D$.
Letting $W \subset \mathbb{C}^n$ denote the image of $D$ under $T_p$, \cite[Theorem 1.2(2)]{DW22a} shows that $W \subset E_{g(p)}$.
Let $w_i = T_{p, i}$ and $w = T_p$ as in \eqref{Bergman representative coordinates}. By \cite[Theorem 1.2(3)]{DW22a} (with
$c^2 = 2(n + 1)^{-1}$, or by utilizing \cite[Lemma 3]{L65} instead) and Lemma \ref{Bergman kernel of the ellipsoid},
\begin{eqnarray*}
	\log\left({K(z, z)K(p, p) \over |K(z, p)|^2}\right) = \log\left( {1 \over 1 - {1 \over n + 1}\sum_{i, j = 1}^n w_i(z)g_{i\bar{j}}(p)w_{\bar{j}}(z)} \right)^{n + 1} = \log\left({K_{E_{g(p)}}(w(z), w(z)) \over C}\right), \quad z \in D,
\end{eqnarray*}
where $C$ is the constant in Lemma \ref{Bergman kernel of the ellipsoid}.  Taking $\partial\overline{\partial}$ of both sides, we find that
$$
\partial\overline{\partial} \log(K(z, z)) = \partial\overline{\partial} \log K_{E_{g(p)}}(w(z), w(z)),
$$
which shows that $T_p$ is a local isometry from
$D$ to $E_{g(p)}$ with respect to the
Bergman metrics %$g$ and $g_{E_{g(p)}}$
of $D$ and $E_{g(p)}$ at every point of $D$. This implies that $W \subset E_{g(p)}$ is open, and thus a domain.  It remains to prove that $E_{g(p)}\setminus W$ is a Bergman-negligible subset of $E_{g(p)}$.

Let $L$ be the (nondegenerate) linear transformation in Lemma \ref{Bergman kernel of the ellipsoid}, which biholomorphically maps $E_{g(p)}$ to the unit ball $\mathbb{B}^n$. In particular, $L$ is also an isometry with respect to the Bergman metrics.  Set $H_p = L \circ T_p$ and $\hat{\Omega} = L(W) \subset \mathbb{B}^n$.  %Since $0 \in W$, $0 = L(0) \in \hat{\Omega}$.
Notice that $H_p$ is a holomorphic map from $D$ onto $\hat{\Omega},$ and it is also a local isometry with respect to the Bergman metrics $g$ and $g_{\mathbb{B}^n}$ of $D$ and $\mathbb{B}^n$.
On the other hand, by Theorem \ref{main theorem - negative constant case}, there is a biholomorphism $F$ from $D$ to $\Omega \subset \mathbb{B}^n$ where $\Omega$ is as described in Theorem \ref{main theorem - negative constant case}. In particular, $F$ is an isometry with respect to the Bergman metrics $g_{D}$ and $g_{\Omega}$ (and recall $g_{\Omega}= g_{\mathbb{B}^n}|_\Omega$).  Now both $H_p$ and $F$ are local isometries from $D$ into $\mathbb{B}^n$.  Thus, by a theorem of Calabi \cite{Ca53} (see also \cite[Theorem 1.1]{HL12}), there exists some $\phi \in \hbox{Aut}(\mathbb{B}^n)$ such that
\begin{equation}\label{consequence of Calabi rigidity theorem}
	\phi \circ F = H_p = L \circ T_p.
\end{equation}

Since the left hand side is one-to-one, so is the right hand side.  This implies that $T_p$ is one-to-one.  Therefore, $T_p: D \to W$ is a biholomorphism.
%This proves the assertion (a) in Corollary \ref{cor1ellipsoid}.
Moreover, by \eqref{consequence of Calabi rigidity theorem}, $\hat{\Omega} = \phi(\Omega)$.  By Theorem \ref{main theorem - negative constant case}, $\mathbb{B}^n \setminus \Omega$ is a Bergman-negligible subset of $\mathbb{B}^n$.  But every member of $\hbox{Aut}(\mathbb{B}^n)$, in particular $\phi$, extends to a biholomorphism in a neighborhood of $\overline{\mathbb{B}^n}$.  Consequently, it is clear that $\mathbb{B}^n \setminus \hat{\Omega}$ is also a Bergman-negligible subset of $\mathbb{B}^n$.  Finally, since $E_{g(p)}$ and $W$ differ from $\mathbb{B}^n$ and $\hat{\Omega}$ respectively by the linear transformation $L$, $E_{g(p)}\setminus W$ is a Bergman-negligible subset of $E_{g(p)}$. This finishes the proof of Corollary \ref{cor1ellipsoid}. \qed

\bibliographystyle{amsplain}

\bibliography{bibliography}

\fontsize{11}{9}\selectfont

\vspace{0.5cm}

\noindent pebenfelt@ucsd.edu;

 \vspace{0.2 cm}

\noindent Department of Mathematics, University of California San Diego, La Jolla, CA 92093, USA

\vspace{0.6 cm}

\noindent jtreuer@ucsd.edu

 \vspace{0.2 cm}

\noindent Department of Mathematics, University of California San Diego, La Jolla, CA 92093, USA

\vspace{.6 cm}

\noindent m3xiao@ucsd.edu;

 \vspace{0.2 cm}

\noindent Department of Mathematics, University of California San Diego, La Jolla, CA 92093, USA

\end{document}